\newcommand\dsone{\mathds{1}} 
\newcommand{\ds}{\displaystyle}
\newcommand{\ts}{\textstyle}
\newcommand{\tint}{{\ts \int}}
\newcommand{\C}[1]{\mathcal{#1}}
\newcommand{\ov}[1]{\overline{#1}}
\newcommand{\wt}[1]{\widetilde{#1}}
\newcommand{\wh}[1]{\widehat{#1}}
\newcommand{\B}[1]{\mathbb{#1}}
\DeclareMathOperator*{\ess}{ess}
\DeclareMathOperator{\Var}{\mathbb{V}ar}
\DeclareMathOperator{\Span}{span}
\numberwithin{equation}{section}
\newtheorem{thm}{Theorem}[section]
\newtheorem{prop}[thm]{Proposition}
\newtheorem{lemma}[thm]{Lemma}
\newtheorem{corollary}[thm]{Corollary}
{\theorembodyfont{\rm} 

\newtheorem{rmk}{Remark}[section]}
\theoremstyle{plain}
\newenvironment{proof}{{\sc Proof.}}{\ $\square$}
\titleformat{\section}[block]{\sc\center}{\thetitle.}{5pt}{}[]
\titlespacing{\section}{0pt}{*4.5}{*3}
\titleformat{\subsection}[runin]{\sc}{\thetitle.}{5pt}{}[.]
\titlespacing{\subsection}{0pt}{*3}{*2}
\titleformat{\subsubsection}[runin]{\it}{\thetitle.}{5pt}{}[.]
\titlespacing{\subsubsection}{0pt}{*2}{*2}
\newcommand{\thmref}[1]{\ref{#1} (page \pageref{#1})}
\newcommand{\myeq}[1]{{\rm (\ref{#1}, page \pageref{#1})}}
\DeclareMathOperator{\XX}{\text{\bf \textsf X}}
\DeclareMathOperator{\YY}{\text{\bf \textsf Y}}
\begin{document}
\renewcommand{\sectionmark}[1]{\markboth{\thesection\ #1}{}}
\renewcommand{\subsectionmark}[1]{\markright{\thesubsection\ #1}}
\fancyhf{}
\fancyhead[RE]{\small\sc\nouppercase{\leftmark}}
\fancyhead[LO]{\small\sc\nouppercase{\rightmark}}
\fancyhead[LE,RO]{\thepage}
\fancyfoot[RO,LE]{\small\sc Olivier Catoni $\Rightarrow$ Jean-Yves Audibert}
\fancyfoot[LO,RE]{\small\sc\today}
\renewcommand{\footruleskip}{1pt}
\renewcommand{\footrulewidth}{0.4pt}
\newcommand{\mypoint}{\makebox[1ex][r]{.\:\hspace*{1ex}}}
\addtolength{\footskip}{11pt}
\pagestyle{plain}
\begin{center}
{\bf Linear regression through PAC-Bayesian truncation}\\[12pt]
{\sc Jean-Yves Audibert\footnote{Universit\'e Paris-Est, Ecole des Ponts ParisTech, Imagine,
6 avenue Blaise Pascal, 77455 Marne-la-Vall\'ee, France, audibert@imagine.enpc.fr
}$^{\!,}$\footnote{Sierra, CNRS/ENS/INRIA --- UMR 8548, 45 rue d'Ulm, 75230 Paris cedex 05, France}, Olivier Catoni
\footnote{D\'epartement de Math\'ematiques et Applications, CNRS -- UMR 8553, 
\'Ecole Normale Sup\'erieure,
45 rue d'Ulm, 75230 Paris cedex 05, France, olivier.catoni@ens.fr}}$^{\!,}$\footnote{
INRIA Paris-Rocquencourt - CLASSIC team.}\\[12pt]
{\small \it \today }\\[12pt]
\end{center}

{\small
{\sc Abstract :} 
We consider the problem of predicting as well as the best linear combination of $d$ given functions in least squares regression under $L^\infty$ constraints on the linear combination. 
When the input distribution is known, there already exists an algorithm having an expected excess risk of order $d/n$, where $n$ is the size of the training data. 
Without this strong assumption, standard results often contain a multiplicative $\log n$ factor, complex constants involving the conditioning of the Gram matrix of the covariates, kurtosis coefficients or some
geometric quantity characterizing the relation between $L^2$ and $L^\infty$-balls and require
some additional assumptions like exponential moments of the output.

This work provides a PAC-Bayesian shrinkage procedure with a simple excess risk bound of order $d/n$ holding in expectation and in deviations, under various assumptions. 
The common surprising factor of these results is their simplicity and the absence of exponential moment condition on the output distribution while achieving exponential deviations.
The risk bounds are obtained through a PAC-Bayesian analysis on truncated differences of losses. 
We also show that these results can be generalized to other strongly convex loss functions. 
\\[12pt]
{\sc 2000 Mathematics Subject Classification:}
62J05, 62J07.\\[12pt]
{\sc Keywords:} 
Linear regression, Generalization error, Shrinkage, PAC-Bayesian
theorems, Risk bounds, Robust statistics, Resistant estimators,
Gibbs posterior distributions,  
Randomized estimators, Statistical learning theory
}

\tableofcontents

\newpage

\def\bmul{\begin{multline*}}
\def\bigbegar{\begin{eqnarray*}} 
\def\bigendar{\end{eqnarray*}}
\def\lbegar{$$\left\{ \begin{array}{lll}}
\def\rendar{\end{array} \right.$$}
\def\rendarp{\end{array} \right..$$}
\def\begarlab{\begin{equation} \begin{array}{lll} \label}
\def\endarlab{\end{array} \end{equation}}
\def\beglab{\begin{equation} \label}
\def\endlab{\end{equation}}
\newcommand\beglabc[1]{\begin{equation*} }
\def\endlabc{\end{equation*}}
\def\lbegarlab{\begin{equation} \left\{ \begin{array}{lll} \label}
\def\rendarlab{\end{array} \right. \end{equation}}
\def\rendarplab{\end{array} \right.. \end{equation}}

\newcommand\und[2]{\underset{#2}{#1}\;}
\newcommand\undc[2]{{#1}_{#2}\;}

\newcommand\wrt{\text{w.r.t.}} 

\newcommand\cst{\text{Cst}\,}
\newcommand\dsV{{\mathbb V}} 
\newcommand\eqdef{\triangleq}

\newcommand\A{\mathcal{A}}
\newcommand\cB{\mathcal{B}}
\newcommand\cC{\Theta}
\newcommand\cD{\mathcal{D}}
\newcommand\E{\mathbb{E}}
\newcommand\cE{\mathcal{E}}
\newcommand\cF{\mathcal{F}}
\newcommand\cH{\mathcal{H}}
\newcommand\I{\mathcal{I}}
\newcommand\J{\mathcal{J}}
\newcommand\cK{\mathcal{K}}
\newcommand\cL{\mathcal{L}}
\newcommand\M{\mathcal{M}}
\newcommand\N{\mathcal{N}}
\renewcommand\P{\mathbb{P}}
\newcommand\R{\mathbb{R}}
\newcommand\calR{\mathcal{R}}
\renewcommand\S{\mathcal{S}}
\newcommand\cS{\mathcal{S}}
\newcommand\bcR{B} 
\newcommand\cR{\tilde{B}} 
\newcommand\W{\mathcal{W}}
\newcommand\X{\mathcal{X}}
\newcommand\Y{\mathcal{Y}}
\newcommand\Z{\mathcal{Z}}

\newcommand\jyem{\em}

\newcommand\hdelta{\hat{\delta}}
\newcommand\hth{\hat{\theta}}
\newcommand\hbth{\bar{\th}}
\newcommand\tth{\tilde{\theta}}
\newcommand\hlam{\hat{\lam}}
\newcommand\lamerm{\hlam^{\textnormal{(erm)}}}

\newcommand\tb{\tilde{b}}
\newcommand\tc{\tilde{c}}
\newcommand\tf{\tilde{f}}
\newcommand\tlam{\tilde{\lam}}

\newcommand\be{\beta}
\newcommand\ga{\gamma}
\newcommand\kap{\kappa}
\newcommand\lam{\lambda}

\newcommand\sint{{\textstyle \int}}
\newcommand\inth{\sint \pi(d\theta)}
\newcommand\inthj{\sint \pij(d\theta)}
\newcommand\inthp{\sint \pi(d\theta')}

\newcommand\gas{\ga^*}

\newcommand\hpi{\hat{\pi}}
\newcommand\pis{\pi^*}
\newcommand\tpi{\tilde{\pi}}

\newcommand\ovth{\ov{\theta}}
\newcommand\wth{\wt{\theta}}
\newcommand\wthj{\wt{\theta}_j}

\newcommand\eps{\varepsilon}
\renewcommand\epsilon{\varepsilon}
\newcommand\logeps{\log(\eps^{-1})}
\newcommand\leps{\log^2(\eps^{-1})}

\newcommand{\bi}{\begin{itemize}}
\newcommand{\ei}{\end{itemize}}

\newcommand\ovR{\ov{R}}

\newcommand\hhpi{\hat{\hat{\pi}}}
\newcommand\wwth{\wt{\wt{\theta}}}
\newcommand\pia{\pi^{(1)}}
\newcommand\pib{\pi^{(2)}}
\newcommand\pij{\pi^{(j)}}
\newcommand\tpia{\tilde{\pi}^{(1)}}
\newcommand\tpib{\tilde{\pi}^{(2)}}
\newcommand\tpij{\tilde{\pi}^{(j)}}
\newcommand\wtha{\wt{\theta}_1}
\newcommand\wthb{\wt{\theta}_2}
\newcommand\wths{\wt{\theta}_s}
\newcommand\wtht{\wt{\theta}_t}

\newcommand\cmin{c_{\min}}
\newcommand\cmax{c_{\max}}

\newcommand\ra{\rightarrow}

\newcommand\hatt{\hat{t}}
\newcommand\argmax{\textnormal{argmax}}
\newcommand\argmin{\textnormal{argmin}}

\newcommand\diag{\textnormal{Diag}}

\newcommand\Fg{\cF^\#}

\newcommand\vp{\varphi}
\newcommand\tvp{\tilde{\vp}}
\newcommand\tphi{\tilde{\phi}}
\renewcommand\th{\theta}

\newcommand\vsp{\vspace{1cm}}
\newcommand\lhs{\text{l.h.s.}}
\newcommand\rhs{\text{r.h.s.}}

\newcommand\expe[2]{\undc{\E}{#1\sim#2}}
\newcommand\expec[2]{\undc{\E}{#1\sim#2}}
\newcommand\expecc[2]{\E_{#1}}
\newcommand\expecd[2]{\E_{#2}}

\newcommand\hC{\hat{C}}
\newcommand\hf{\hat{f}}
\newcommand\hrho{\hat{\rho}}

\newcommand\br{\bar{r}}
\newcommand\chr{\check{r}}
\newcommand\bR{\bar{R}}

\newcommand\lan{\langle}
\newcommand\ran{\rangle}

\newcommand\logepsg{\log(|\Cg|\eps^{-1})}

\newcommand\Pemp{\hat{\P}}

\newcommand\fracl[2]{{(#1)}/{#2}}
\newcommand\fracc[2]{{#1}/{#2}}
\newcommand\fracr[2]{{#1}/{(#2)}}
\newcommand\fracb[2]{{(#1)}/{(#2)}}

\newcommand\hfproj{\hf^{\textnormal{(proj)}}}
\newcommand\thproj{\hat{\th}^{\textnormal{(proj)}}}

\newcommand\hfols{\hf^{\textnormal{(ols)}}}
\newcommand\thfols{\tilde{f}^{\textnormal{(ols)}}}
\newcommand\thols{\hat{\th}^{\textnormal{(ols)}}}
\newcommand\therm{\hat{\th}^{\textnormal{(erm)}}}
\newcommand\hferm{\hf^{\textnormal{(erm)}}}
\newcommand\zols{\zeta^{\textnormal{(ols)}}}

\newcommand\thrid{\tilde{\th}} 
\newcommand\frid{\tilde{f}} 
\newcommand\freg{f^{\textnormal{(reg)}}}
\newcommand\thrlam{\hth^{\textnormal{(ridge)}}} 
\newcommand\hfrlam{\hf^{\textnormal{(ridge)}}} 
\newcommand\thllam{\hth^{\textnormal{(lasso)}}} 
\newcommand\hfllam{\hf^{\textnormal{(lasso)}}} 

\newcommand\flin{f^*_{\textnormal{lin}}}
\newcommand\thlin{\th^{\textnormal{(lin)}}}
\newcommand\Flin{\mathcal{F}_{\textnormal{lin}}}

\renewcommand\Phi{\XX}
\newcommand\demi{\frac{1}{2}}
\newcommand\demic{\fracc{1}{2}}

\newcommand\substa[2]{\substack{#1\\#2}}
\newcommand\substac[2]{{#1\,;\,#2}}
\newcommand\tpsi{\tilde{\psi}}
\newcommand\tzeta{\tilde{\zeta}}
\newcommand\ta{\tilde{a}}
\newcommand\chis{\chi_\sigma}
\newcommand\tchi{\tilde{\chi}}
\newcommand\tchis{\tchi_\sigma}
\newcommand\psis{\psi_\sigma}

\newcommand\tA{\tilde{A}}
\newcommand\tL{\tilde{L}}

\newcommand\hL{\hat{L}}
\newcommand\hcE{\hat{\cE}}
\newcommand\hDe{\hat{\cE}}
\newcommand\cEb{\cE^{\sharp}}
\newcommand\La{L^{\flat}}
\newcommand\cEa{\cE^{\flat}}
\newcommand\Lb{L^{\sharp}}

\newcommand\Pa{P^{\flat}}
\newcommand\Pb{P^{\sharp}}

\newcommand{\V}[1]{\overline{#1}}
\newcommand\bL{\V{L}}

\newcommand\ela{\tilde{\ell}}
\newcommand\hpig{\hpi^{\textnormal{(Gibbs)}}}

\newcommand\sigmb{\phi}
\newcommand{\tR}{\tilde{\cR}}
\newcommand{\logdeps}{\log(4d\eps^{-1})}
\newcommand{\logddeps}{\log(2d^2\eps^{-1})}

\newcommand{\piobs}{\bar{\pi}}

\newcommand\cdd{\mathcal{D}'}

\section*{Introduction}
\addcontentsline{toc}{section}{Introduction}
\subsection*{Our statistical task}
\addcontentsline{toc}{subsection}{Our statistical task}
Let $Z_1=(X_1,Y_1),\dots,Z_n=(X_n,Y_n)$ be $n\ge 2$ pairs of input-output 
and assume that each pair has been independently drawn from the same unknown distribution $P$. Let 
$\X$ denote the input space and let the output space be the set of real numbers $\R$, so that $P$ is a probability distribution on the product space 
$\Z \eqdef \X\times\R$. 
The target of learning algorithms is to predict the output $Y$ associated with an input $X$
for pairs $Z=(X,Y)$ drawn from the distribution $P$. 
The quality of a (prediction) function 
$f:\X\rightarrow\R$ is measured by the least squares {\jyem risk}: 
        $$R(f) \eqdef \undc{\E}{Z\sim P} \bigl\{ [Y-f(X)]^2 
\bigr\} .$$
Through the paper, we assume that the output and all the prediction functions we consider are square integrable.
Let $\cC$ be a closed convex set of $\R^d$, and $\vp_1,\dots,\vp_d$ be $d$ prediction functions. 
Consider the regression model
	\begin{align*}
	\cF= \bigg\{ f_\th=\sum_{j=1}^d \th_j \vp_j ; (\th_1,\dots,\th_d) \in \cC \bigg\}.
	\end{align*}
The best function $f^*$ in $\cF$ is defined by
	\begin{align} \label{eq:fstar}
	f^* =\sum_{j=1}^d \th^*_j \vp_j\in\und{\argmin}{f\in\cF} \,R(f).
	\end{align}
Such a function always exists but is not necessarily unique. Besides it is unknown since the probability generating the data is unknown.

We will study the problem of predicting (at least) as well as function $f^*$. In other words, we want to deduce from the observations
$Z_1,\dots,Z_n$ a function $\hf$ having with high probability a risk bounded by the minimal risk $R(f^*)$ on $\cF$ plus a small remainder term, which is typically of order $d/n$. 
Except in particular settings (e.g., when $\cC$ is a probability simplex\footnote{This corresponds to the convex aggregation problem, which has been widely studied by several authors since the work of Nemirovski and Judisky \cite{Nem98,Jud00}. This particular setting is not the topic of this paper, but our results apply to it, and correspond to the minimax optimal rate for $d\le \sqrt{n}$. For $d>\sqrt{n}$, the minimax optimal rate of convex aggregation is $\sqrt{\fracc{\log(1+ d/\sqrt{n})}n}$, which is not achieved by our procedure.}
and $d\ge \sqrt{n}$), 
it is known that the convergence rate $d/n$ cannot be improved in a minimax sense (see \cite{Tsy03}, and \cite{Yan01b} for related results).

More formally, the target of the paper is to develop estimators $\hf$ for which the excess risk is controlled {\jyem in deviations}, i.e., such that
for an appropriate constant $\kap>0$, for any $\eps>0$, with probability at least $1-\eps$,
	\beglab{eq:devtarget}
	R(\hf) - R(f^*) \le \kap \frac{d+\logeps}{n}.
	\endlab
Note that by integrating the deviations (using the identity 
$\E W = \int_0^{+\infty} \P(W > t) dt$ 
which holds true for any nonnegative random variable $W$), 
Inequality \eqref{eq:devtarget} implies 
	\beglab{eq:exptarget}
	\E R(\hf) - R(f^*) \le \kap \frac{d+1}{n}.
	\endlab
In this work, we do not assume that the function 
	\[
	\freg:x \mapsto \E[ Y | X=x],
	\]
which minimizes the risk $R$ among all possible measurable functions,
belongs to the model $\cF$. So we might
have $f^* \neq \freg$ and in this case, bounds of the form
	\beglab{eq:nottarget}
	\E R(\hf) - R(\freg) \le C [R(f^*)-R(\freg)] + \kap \frac{d}{n},
	\endlab
with a constant $C$ larger than $1$ do not even ensure that $\E R(\hf)$ tends to $R(f^*)$ when $n$ goes to infinity.
This kind of bounds with $C>1$ have been developed to analyze nonparametric estimators using linear approximation spaces,
in which case the dimension $d$ is a function of $n$ chosen so that the bias term $R(f^*)-R(\freg)$ has the order $d/n$ of the estimation term
(see \cite{Gyo04} and references within). Here we intend to assess the
generalization ability of the estimator even when the model is misspecified 
(namely when $R(f^*) > R(\freg)$).
Moreover we do not assume either that $Y-\freg(X)$ and $X$ are independent.

\textbf{Notation.} \hspace*{2mm}
When $\cC=\R^d$, 
the function $f^*$ and the space $\cF$ will be written $\flin$ and $\Flin$ to emphasize that 
$\cF$ is the whole linear space spanned by $\vp_1,\dots,\vp_d$:
\[  \Flin = \Span \{\vp_1,\dots,\vp_d\} \text{\qquad and\qquad} \flin\in\und{\argmin}{f\in\Flin} R(f).\]
The Euclidean norm will simply be written as $\|\cdot\|$, and $\langle \cdot,\cdot\rangle$
will be its associated inner product.
We will consider the vector valued function
$\vp : \C{X} \rightarrow \B{R}^d$ defined
by $\vp(X) = \bigl[ \vp_k(X) \bigr]_{k=1}^d$, so that for any $\th\in\Theta$, we have
  \[
  f_\th(X) = \langle \th , \vp(X) \rangle.
  \]
The Gram matrix is the $d\times d$-matrix $Q=\B{E} \bigl[ \vp(X) \vp(X)^T\bigr]$, and its smallest and largest eigenvalues will 
respectively be written as $q_{\min}$ and $q_{\max}$. 
The empirical risk of a function $f$ is
$$
r(f) = \frac{1}{n} \sum_{i=1}^n \bigl[ f(X_i) - 
Y_i \bigr]^2
$$
and for $\lam\ge 0$, the ridge regression estimator on $\cF$ is defined by
$\hfrlam=f_{\thrlam}$ with
$$
\thrlam \in \arg \min_{\th \in \Theta} 
r(f_{\theta}) + \lambda \lVert \theta \rVert^2,
$$
where $\lam$ is some nonnegative real 
parameter. In the case when $\lambda = 0$, 
the ridge regression $\hfrlam$ is nothing
but the empirical risk minimizer $\hferm$.
In the same way, we introduce the optimal ridge function  
optimizing the expected ridge risk: $\frid=f_{\thrid}$ with
\beglab{eq:frid}
\thrid \in \arg \min_{\theta \in \Theta} \big\{ R(f_{\theta}) 
+ \lambda \lVert \theta \rVert^2 \big\}.
\endlab
Finally, let $Q_{\lambda} = Q + \lambda I$ be the ridge regularization 
of $Q$, where $I$ is the identity matrix.

\subsection*{Outline and contributions}
\addcontentsline{toc}{subsection}{Outline and contributions}

The paper is organized as follows.
Section~\ref{sec:lit} is a survey on risk bounds in linear least squares regression.
Theorems \ref{th:alqa} and \ref{th:bmnew} are the results which come closer to our target.
Section~\ref{sec:main} presents our main result on linear least squares regression. 
Section~\ref{sec:gen} gives risk bounds for general 
loss functions from which the results 
of Section~\ref{sec:main} are derived. 
Appendix \ref{sec:lb} shows that \eqref{eq:devtarget}
cannot hold under the only
assumption that the variance of $Y$ is finite, even in the favorable
situation where $f^{(reg)}$ belongs to $\cF$.

The main contribution of this paper is to show that an appropriate shrinkage estimator involving truncated differences of losses has an excess risk of order $d/n$ (without a logarithmic factor as it appears in numerous works), concentrating exponentially, which does not 
degrade when the matrix $Q$ is ill-conditioned or when some ratio of $L^2$ and $L^\infty$ norms behaves badly or when the output distribution is heavy-tailed.
Our results tend to say that shrinkage and truncation lead to more robust algorithms
when we consider robustness with respect to the distribution of the noise, and not
to a potential contamination of the training data by input-output pairs not generated by $P$.

\section{Variants of known results} \label{sec:lit}

\subsection{Ordinary least squares and empirical risk minimiza\-tion} \label{sec:ols}

The ordinary least squares estimator is the most standard method in linear least squares regression. 
It minimizes the empirical risk 
$$
	r(f) = \frac{1}{n} \sum_{i=1}^n [Y_i-f(X_i)]^2,
$$
among functions in $\Flin$ and produces
$$
	\hfols= \sum_{j=1}^d \thols_j \vp_j,
$$
with $\thols=[\thols_j]_{j=1}^d$ a column vector satisfying
	\begarlab{eq:thols}
	\Phi^T\Phi \, \thols = \Phi^T \YY,
	\endarlab
where $\YY=[Y_j]_{j=1}^n$ and $\Phi=(\vp_j(X_i))_{ 1\le i \le n, 1\le j \le d}$. 
It is well-known that 
\bi
\item the linear system \eqref{eq:thols} has at least one solution, and in fact, the set of solutions is exactly $\{\Phi^+ \YY + u ; u \in \text{ker}\, \Phi\}$;
where $\Phi^+$ is the Moore-Penrose pseudoinverse of $\Phi$ and $\text{ker} \,\Phi$ is the kernel of the linear operator $\Phi$.
\item $\Phi \, \thols$ is the (unique) orthogonal projection of the vector $\YY\in\R^n$
on the image of the linear map $\Phi$;
\item if $\sup_{x\in\X} \Var(Y |X=x) = \sigma^2 < +\infty$, we have
(see \cite[Theorem 11.1]{Gyo04}) for any $X_1,\dots,X_n$ in $\X$,
	\begin{multline} \label{eq:fixeddesign}
	\E \bigg\{ \frac{1}{n} \sum_{i=1}^n \big[\hfols(X_i) - \freg(X_i)\big]^2 \bigg| X_1,\dots,X_n\bigg\}\\
		\qquad - \und{\min}{f\in\Flin} \frac{1}{n} \sum_{i=1}^n \big[f(X_i) - \freg(X_i)\big]^2 
		\le \sigma^2 \frac{\text{rank}(\Phi)}{n} \le \sigma^2 \frac{d}{n},
	\end{multline}
where we recall that $\freg: x\mapsto \E[ Y | X=x]$ is the optimal regression function, and 
that when this function belongs to $\Flin$ (i.e., $\freg=\flin$), the minimum term in \eqref{eq:fixeddesign} vanishes; 
\item from Pythagoras' theorem for the (semi)norm $W \mapsto \sqrt{\E W^2}$ on the space of the square integrable random variables, 
\begin{multline} \label{eq:pyt}
	R(\hfols) - R(\flin) \\ = \E \big[ \hfols(X) - \freg(X) \big| Z_1,\dots,Z_n \big]^2 - \E \big[ \flin(X) - \freg(X) \big]^2.
\end{multline}
\ei
The analysis of the ordinary least squares often stops at this point in classical statistical textbooks.
(Besides, to simplify, the strong assumption $\freg=\flin$ is often made.)
This can be misleading since Inequality \eqref{eq:fixeddesign} does not imply a $d/n$ upper bound on the risk
of $\hfols$. Nevertheless the following result holds \cite[Theorem 11.3]{Gyo04}.
\begin{thm}\label{th:weakols}
If $\,\sup_{x\in\X} \Var(Y |X=x) = \sigma^2 < +\infty$ 
and 
$$
\|\freg\|_\infty = \sup_{x\in\X} |\freg(x)| \le H
$$ for some $H>0$, then
the truncated estimator $\hfols_H=(\hfols\wedge H)\vee -H$ satisfies 
	\begin{multline} \label{eq:weakols}
	\E R(\hfols_H) - R(\freg) \le 8 [R(\flin)-R(\freg)] + \kap \frac{(\sigma^2\vee H^2)d\log n }{n} 
	\end{multline}
for some numerical constant $\kap$.
\end{thm}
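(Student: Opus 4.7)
The plan is to follow the classical three-stage template of Györfi--Kohler--Krzy\.zak--Walk adapted to a misspecified linear model. First, by the Pythagorean identity underlying \eqref{eq:pyt}, for any square-integrable $f$ one has $R(f) - R(\freg) = \mathbb{E}\{[f(X)-\freg(X)]^2\}$, so the target excess risk equals the squared $L^2(P_X)$-distance $\|\hfols_H - \freg\|^2$. I would then introduce the empirical quadratic form $\nu_n(g) \eqdef n^{-1}\sum_{i=1}^n g(X_i)^2$ and perform the standard split
\[
\|\hfols_H - \freg\|^2 = \Big(\|\hfols_H - \freg\|^2 - 2\nu_n(\hfols_H-\freg)\Big) + 2\nu_n(\hfols_H-\freg).
\]

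For the second summand (the empirical side), the map $t\mapsto (t\wedge H)\vee(-H)$ is a pointwise contraction toward any $[-H,H]$-valued function, hence toward $\freg$; so $\nu_n(\hfols_H-\freg) \le \nu_n(\hfols-\freg)$, and Inequality \eqref{eq:fixeddesign} followed by integration over the design gives
\[
\mathbb{E}\,\nu_n(\hfols-\freg) \le \sigma^2\,\tfrac{d}{n} + \mathbb{E}\min_{f\in\Flin}\nu_n(f-\freg) \le \sigma^2\,\tfrac{d}{n} + R(\flin)-R(\freg).
\]
For the first summand (the fluctuation side), I would control it uniformly over the class $\mathcal{G}_H \eqdef \{T_H f - \freg : f \in \Flin\}$ of $2H$-bounded functions indexed by the $d$-dimensional parameter $\th$, where $T_H$ denotes pointwise truncation at level $H$: standard Pollard/Haussler arguments bound the empirical $L^1$ metric entropy of $\mathcal{G}_H$ by $\kap\, d\log n$ up to a scale term, and a variance-localized uniform deviation inequality (Theorem~11.4 of Györfi et al., or equivalently a Bernstein/Talagrand chaining) then yields
\[
\mathbb{E}\sup_{g\in\mathcal{G}_H}\big(\mathbb{E}[g(X)^2] - 2\nu_n(g)\big) \le \kap (\sigma^2\vee H^2)\,\frac{d\log n}{n}.
\]
A short peeling argument over the empirical-norm term absorbs the constants appearing in front of the bias into the advertised factor $8$.

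The principal obstacle is this last uniform deviation: because $Y-\freg(X)$ is controlled only in conditional variance and not in $L^\infty$, Hoeffding's inequality is too weak, and one must trade variance against mean via a Bernstein-type bound --- this is what simultaneously yields the linear $\sigma^2$ dependence and, through the peeling, forces the enlarged bias coefficient $8$. The $\log n$ factor originates precisely from this metric-entropy estimate and is exactly the suboptimality that the PAC-Bayesian truncation procedure of Section~\ref{sec:main} is designed to eliminate.
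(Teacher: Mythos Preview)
The paper does not supply a proof of Theorem~\ref{th:weakols}; it is quoted verbatim as \cite[Theorem~11.3]{Gyo04} in the survey section and used only as a benchmark. Your sketch follows, in outline, the proof given in that reference (split into an empirical part handled by \eqref{eq:fixeddesign} plus truncation-as-contraction, and a fluctuation part handled by a ratio-type uniform deviation over the $2H$-bounded class of truncated linear predictors minus $\freg$), so your approach matches the source the paper defers to.

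Two small inaccuracies worth flagging. First, in your displayed deviation bound the factor $\sigma^2\vee H^2$ is misleading: the class $\mathcal{G}_H$ is $2H$-bounded and $\sigma$ does not enter that term at all; the $\sigma^2$ arrives only through \eqref{eq:fixeddesign} in the empirical piece, and the two contributions are then merged into $\sigma^2\vee H^2$ at the end. Second, the plain decomposition $\|h\|^2=(\|h\|^2-2\nu_n(h))+2\nu_n(h)$ delivers a factor~$2$ in front of the bias $R(\flin)-R(\freg)$, not~$8$; in Gy\"orfi et al.\ the constant~$8$ comes from the specific ratio form of their Theorem~11.4 (one works with $\E g^2 - 2\nu_n(g)\le t$ \emph{and} with an analogous inequality in the reverse direction for the fixed comparison function, each contributing a doubling), not from a separate peeling step. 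These are bookkeeping points; the architecture of your argument is correct.
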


Using PAC-Bayesian inequalities, 
Catoni \cite[Proposition 5.9.1]{Cat01} has proved a different type of results on the generalization ability of $\hfols$.

\begin{thm} \label{th:oc}
Let $\cF'\subset\Flin$ be such that for some positive constants $a,M,M'$:
\bi
\item $\text{there exists } f_0 \in \cF' \text{ s.t. for any } x\in\X,$ 
  \[
  \E \Bigl\{ 
  \exp \Bigl[ a \bigl\lvert Y-f_0(X) \bigr\rvert \Bigr]\, \Big| \,X=x \Bigr\} \le M;
  \]
\item $\text{for any } f_1,f_2 \in \cF', \sup_{x\in\X} |f_1(x)-f_2(x)|\le M'$.
\ei
Let $Q=\B{E} \bigl[ \vp(X) \vp(X)^T\bigr]$
and $\hat{Q} = \bigl[ \frac{1}{n} \sum_{i=1}^n \vp(X_i) \vp(X_i)^T \bigr]$
be respectively the expected and empirical Gram matrices. If $\det Q\neq 0$, then
there exist positive constants $C_1$ and $C_2$ (depending only on $a$, $M$ and $M'$) such that
with probability at least $1-\eps$, 
as soon as
 	\beglab{eq:cond}
	\bigg\{
	f\in\Flin:r(f) \le r(\hfols) + C_1 \frac{d}{n}
	\bigg\} \subset \cF',
	\endlab
we have
	\[
	R(\hfols) - R(\flin) \le C_2 \frac{d+\logeps+\log(\frac{\det \hat{Q}}{\det Q})}{n}.
	\]
\end{thm}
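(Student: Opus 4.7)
The plan is to obtain this result by a PAC-Bayesian argument in which the prior is a Gaussian localized around the risk minimizer $\thlin$ and the posterior is a Gaussian localized around the empirical minimizer $\thols$, with covariances adapted to the geometry of the expected and empirical Gram matrices respectively. The log-determinant term in the conclusion will come out naturally from the relative entropy between these two Gaussians, while the dimension $d$ will come from the quadratic expansion of the risk functions around their minimizers.

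More concretely, fix $\beta > 0$ (to be optimized at the end of the proof), and set
\[
\pi = \mathcal{N}\bigl(\thlin,\, \beta^{-1} Q^{-1}\bigr), \qquad \hrho = \mathcal{N}\bigl(\thols,\, \beta^{-1} \hat{Q}^{-1}\bigr),
\]
so that $\pi$ is a (deterministic) prior and $\hrho$ is a data-dependent posterior on $\Theta = \R^d$. A direct computation of the Kullback--Leibler divergence between two non-degenerate Gaussians gives
\[
K(\hrho,\pi) = \tfrac{1}{2}\Bigl[\Tr(Q\hat{Q}^{-1}) - d + \log\bigl(\tfrac{\det \hat{Q}}{\det Q}\bigr) + \beta\,(\thols - \thlin)^T Q\,(\thols - \thlin)\Bigr],
\]
where the last term is precisely $\beta\,[R(\hfols) - R(\flin)]$ by Pythagoras (cf.\ \eqref{eq:pyt}) and the quadratic expansion of $R$ around its minimizer $\thlin$. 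The $\log(\det\hat{Q}/\det Q)$ term in the claimed bound is therefore already visible in the KL, and the term $\Tr(Q\hat{Q}^{-1})$ will, on a good event, behave like $d$ (contributing to the $d/n$ rate).

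Next I would apply a PAC-Bayesian deviation inequality of Catoni type to the loss differences $W_\th(Z) = [Y - f_\th(X)]^2 - [Y - f_0(X)]^2$, for a well-chosen reference $f_0 \in \cF'$. For functions $f_\th$ lying in $\cF'$, the exponential moment hypothesis on $Y - f_0(X)$ and the sup-norm diameter bound $\|f_\th - f_0\|_\infty \le M'$ allow one to control the log-Laplace transform of $W_\th$, yielding a bound of the form
\[
\int \bigl[R(f_\th) - r(f_\th)\bigr]\,\hrho(d\th) \le \bigl[R(\flin) - r(\flin)\bigr] + \frac{K(\hrho,\pi) + \logeps}{\lambda} + \frac{C\lambda}{n},
\]
with probability at least $1-\eps$, for some $\lambda$ proportional to $n$. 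Using the quadratic expansions
\[
R(f_\th) - R(\flin) = (\th - \thlin)^T Q (\th - \thlin), \qquad r(f_\th) - r(\hfols) = (\th - \thols)^T \hat{Q} (\th - \thols),
\]
the Gaussian expectations of the two quadratic forms against $\hrho$ both evaluate to $d/\beta$ plus the values at the centers, so the $d$'s on the two sides cancel, leaving exactly the target excess risk up to a factor involving $\beta$ and the log-determinant ratio.

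The main obstacle is reconciling the fact that the PAC-Bayesian bound only gives the correct control of the log-Laplace transform when $f_\th$ stays in $\cF'$, whereas the Gaussian posterior $\hrho$ has unbounded support. This is exactly what condition \eqref{eq:cond} is designed to handle: by choosing $\beta$ of order $n$, Gaussian concentration forces the mass of $\hrho$ to concentrate on parameters $\th$ whose empirical risk exceeds $r(\hfols)$ by at most $C_1 d/n$, which by assumption puts $f_\th$ inside $\cF'$. One then splits the integral into the bulk (where the exponential-moment-based PAC-Bayes bound applies) and a Gaussian tail (which is negligible thanks to the sub-exponential decay of the Gaussian measure together with the second-moment control coming from $\sup_x \Var(Y|X=x) < \infty$, which is implied by the exponential moment assumption). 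Choosing $\beta$ and $\lambda$ proportional to $n$ with appropriate constants depending only on $a$, $M$ and $M'$ then produces the constants $C_1$ and $C_2$ and the stated inequality.
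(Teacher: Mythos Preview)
The paper does not contain its own proof of this theorem: it is quoted from Catoni~\cite[Proposition~5.9.1]{Cat01} as a literature result, so there is no in-paper argument to compare against. That said, your PAC-Bayesian scheme with Gaussian prior $\mathcal{N}(\thlin,\beta^{-1}Q^{-1})$ and Gaussian posterior $\mathcal{N}(\thols,\beta^{-1}\hat Q^{-1})$ is exactly the mechanism used in the cited reference, and your identification of the $\log(\det\hat Q/\det Q)$ term as coming from the Kullback--Leibler divergence is correct.

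There is, however, a genuine imprecision in your cancellation argument. You write that ``the Gaussian expectations of the two quadratic forms against $\hrho$ both evaluate to $d/\beta$ plus the values at the centers, so the $d$'s on the two sides cancel.'' This is not right: integrating $(\th-\thlin)^T Q(\th-\thlin)$ against $\hrho=\mathcal{N}(\thols,\beta^{-1}\hat Q^{-1})$ gives $(\thols-\thlin)^T Q(\thols-\thlin)+\beta^{-1}\Tr(Q\hat Q^{-1})$, not $d/\beta$. The correct cancellation is between this $\Tr(Q\hat Q^{-1})$ and the identical trace appearing in $K(\hrho,\pi)$, which forces the specific coupling $\beta=2\lambda$ (so that $\beta^{-1}=\tfrac{1}{2\lambda}$). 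With your single inequality and this coupling, the $R(\hfols)$ terms also cancel and you are left with a bound on $r(\flin)-r(\hfols)$ rather than on $R(\hfols)-R(\flin)$; one more PAC-Bayes inequality in the reverse direction (or, equivalently, the two-sided relative bound of \cite{Cat01}) is needed to close the loop. Your treatment of the $\cF'$ localisation via \eqref{eq:cond} and Gaussian concentration is the right idea and matches the role this condition plays in the original argument.
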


This result can be understood as follows. 
Let us assume we have some prior knowledge suggesting 
that $\flin$ belongs to the interior of a set 
$\cF'\subset\Flin$ (e.g., a bound
on the coefficients of the expansion of $\flin$ as a linear
combination of $\vp_1, \dots, \vp_d$). 
It is likely that \eqref{eq:cond} holds, and it is indeed proved in 
Catoni \cite[section 5.11]{Cat01} that the probability that it does not
hold goes to zero exponentially fast with $n$ in the case when $\cF'$
is a Euclidean ball. 
If it is the case, then we know that the excess risk is of order $d/n$
up to the unpleasant ratio of determinants, which, 
fortunately, almost surely tends to $1$ as $n$ goes to infinity.

%
%

%
By using \emph{localized} PAC-Bayes inequalities introduced in Catoni 
\cite{Cat03b, Cat05},
one can derive from Inequality (6.9) and Lemma 4.1 of Alquier \cite{Alq08} the following result.

\begin{thm} \label{th:alqa}
Let $q_{\min}$ be the smallest eigenvalue of the Gram matrix $Q=\B{E} \bigl[ \vp(X) \vp(X)^T\bigr]$.
Assume that there exist a function $f_0 \in \Flin$ and positive constants $H$ and $C$ such that 
	\[
	\| \flin - f_0 \|_{\infty}  \le H.
	\]
and $|Y| \le C$ almost surely.

Then for an appropriate randomized estimator requiring the knowledge of $f_0$, $H$ and $C$, for any $\eps>0$
with probability at least $1-\eps$ $\wrt$ the distribution generating
the observations $Z_1,\dots,Z_n$ and the randomized prediction function $\hf$, we have
    \beglab{eq:alqa}
    R(\hat{f}) - R(\flin) \le \kap (H^2 +C^2) \frac{d \log( 3q_{\min}^{-1} ) + 
        \log \bigl[ (\log n) \eps^{-1} \bigr] }{n},
    \endlab
for some $\kap$ not depending on $d$ and $n$.
\end{thm}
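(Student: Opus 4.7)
The plan is to build a Gibbs-type randomized estimator and analyze it through a localized PAC-Bayesian inequality in the spirit of Catoni \cite{Cat03b, Cat05}. Concretely, let $\theta_0$ denote the coefficient vector of $f_0$, fix a centered Gaussian reference measure $\pi_0 = \mathcal{N}(\theta_0, s^2 I)$ on $\R^d$, and consider the posterior
\[
\hat{\pi}(d\theta) \propto \exp\bigl(-\beta [r(f_\theta) - r(f_0)]\bigr)\, \pi_0(d\theta),
\]
drawing the randomized predictor $\hat{f} = f_{\hat\theta}$ with $\hat\theta \sim \hat\pi$. The assumptions $|Y|\le C$ and $\|\flin-f_0\|_\infty\le H$ are used first to guarantee that, on the effective support of $\hat\pi$, the loss differences $[Y-f_\theta(X)]^2 - [Y-\flin(X)]^2$ are uniformly bounded in terms of $H$ and $C$, so that a Bernstein-type PAC-Bayes inequality applies cleanly.

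First, I would write a standard PAC-Bayes bound in exponential form: for any data-free probability measure $\pi$ and any $\eps>0$, with probability at least $1-\eps$, every posterior $\rho$ satisfies
\[
\int R(f_\theta)\,\rho(d\theta) - R(\flin) \le \int r(f_\theta)\,\rho(d\theta) - r(\flin) + \frac{\mathrm{KL}(\rho,\pi) + \log \eps^{-1}}{\lambda} + V(\lambda),
\]
where $V(\lambda)$ is the variance correction coming from the bounded loss differences. Then I would invoke Catoni's localization trick: rather than applying this bound with the fixed prior $\pi_0$, integrate it against the ``Gibbs prior'' $\pi_\gamma(d\theta)\propto \exp(-\gamma R(f_\theta))\,\pi_0(d\theta)$ for a suitable $\gamma$. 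Using Fubini and the fact that $\pi_\gamma$ is data-free, one obtains a bound with $\mathrm{KL}(\rho,\pi_\gamma)$ in place of $\mathrm{KL}(\rho,\pi_0)$; since $\pi_\gamma$ is itself concentrated near $\flin$, this considerably reduces the complexity penalty.

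Next I would specialize $\rho = \hat\pi$ (or a Gaussian perturbation of the maximum a posteriori point), at which stage the empirical term on the right-hand side becomes nonpositive by construction and only the complexity term remains. The critical computation is the evaluation of $\mathrm{KL}(\hat\pi,\pi_\gamma)$: both are approximately Gaussian with covariances proportional to $(s^{-2}I + \gamma Q)^{-1}$ and $(s^{-2}I + \gamma \hat Q + \beta \hat Q)^{-1}$ respectively, so the KL splits into a squared-bias term, controlled by $H$ and $C$, and a log-determinant term of the form
\[
\tfrac12 \log\det\bigl(I + c\, s^2 Q\bigr) \;\lesssim\; \tfrac{d}{2}\log\bigl(1 + c\, s^2 q_{\max}\bigr),
\]
plus a symmetric contribution $\tfrac12 \log \det(s^{-2} Q^{-1})\lesssim \tfrac{d}{2}\log(s^{-2} q_{\min}^{-1})$ that accounts for the shrinking of the posterior relative to the prior. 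Choosing $s^2$ so that the two logarithmic contributions balance (typically $s^2 \asymp q_{\min}^{-1}$) and $\lambda,\gamma \asymp n/(H^2+C^2)$ yields the announced rate $(H^2+C^2)\,[d\log(3q_{\min}^{-1}) + \log((\log n)\eps^{-1})]/n$, where the $\log n$ arises because $\lambda$ must be chosen from a finite grid via a union bound.

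The main obstacle is ensuring that the PAC-Bayesian complexity carries exactly the factor $d\log(q_{\min}^{-1})$ and no extra spurious dependence on $q_{\max}$ or on the diameter of $\cF$: this requires the two-step localization (introducing $\pi_\gamma$ before applying PAC-Bayes with $\hat\pi$) to be carried out with matching scales, and it requires a careful verification that the Gaussian posterior $\hat\pi$ remains, with overwhelming probability, inside the region where $\|f_\theta-\flin\|_\infty$ is $O(H+C)$, so that the Bernstein variance term $V(\lambda)$ stays of the right order. Everything else is a routine optimization over $(s,\beta,\gamma)$ and a union bound over a logarithmic grid of $\beta$'s.
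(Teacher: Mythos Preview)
The paper does not give a self-contained proof of this theorem: it is stated in the survey section and attributed to Alquier, with the sentence ``one can derive from Inequality (6.9) and Lemma 4.1 of Alquier \cite{Alq08}'' preceding the statement, together with a reference to Catoni's localized PAC-Bayes framework \cite{Cat03b,Cat05}. So there is no in-paper argument to match line by line; the comparison is with the cited machinery.

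Your general strategy---a Gibbs posterior analysed via a localized PAC-Bayes inequality, with a union bound over a grid of temperatures producing the $\log\log n$---is the correct family and is exactly what the paper invokes. The difficulty is in the KL computation you sketch. With $\pi_0=\mathcal N(\theta_0,s^2 I)$ and $\hat\pi\propto e^{-\beta r(f_\theta)}\pi_0$, the posterior is exactly Gaussian with covariance $(s^{-2}I+2\beta\hat Q)^{-1}$, where $\hat Q$ is the \emph{empirical} Gram matrix; the localized prior $\pi_\gamma\propto e^{-\gamma R(f_\theta)}\pi_0$ has covariance $(s^{-2}I+2\gamma Q)^{-1}$. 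The log-determinant part of $\mathrm{KL}(\hat\pi,\pi_\gamma)$ is then a single term
\[
\tfrac12\log\det\bigl[(s^{-2}I+2\beta\hat Q)(s^{-2}I+2\gamma Q)^{-1}\bigr],
\]
which still carries the random ratio $\hat Q$ versus $Q$---precisely the object Theorem~\ref{th:oc} is stuck with. Your splitting into two log-det pieces, one of which is ``$\tfrac12\log\det(s^{-2}Q^{-1})$'', is not the KL between two Gaussians, and the way you make $q_{\min}^{-1}$ appear is not justified by that computation. As written, your argument would reproduce the determinant-ratio bound of Theorem~\ref{th:oc}, not the $q_{\min}^{-1}$ bound of Theorem~\ref{th:alqa}.

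The mechanism that replaces the random determinant by $q_{\min}^{-1}$ in the localized approach (and in the paper's own Condition~C, Section~\ref{sec:gen}) is to arrange the PAC-Bayes inequality so that the complexity term is a ratio of \emph{deterministic} normalizing constants $\int e^{-\alpha\bar R}\,d\pi\big/\int e^{-\beta\bar R}\,d\pi$, never a KL between $\hat\pi$ and $\pi_\gamma$ computed directly. One then bounds this ratio using only the geometry of the true excess risk $\bar R(f_\theta)=(\theta-\theta^*)^{\!T}Q(\theta-\theta^*)$, and $q_{\min}$ enters because the sublevel sets of $\bar R$ are ellipsoids whose longest semi-axis scales like $q_{\min}^{-1/2}$ relative to the Euclidean metric carried by the prior. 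If you rewrite your argument so that the empirical Gram matrix never appears in the complexity term---using, e.g., the duality formula for KL and the intermediate inequality before specializing $\rho=\hat\pi$---you will land on the right bound; as it stands, the KL step is the gap.
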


Using the result of \cite[Section 5.11]{Cat01},
one can prove that Alquier's result still holds for $\hf= \hfols$,
but with $\kap$ also depending on the determinant of the product matrix $Q$.
The $\log[ \log(n) ]$ factor is unimportant and could be removed in
the special case quoted here (it comes from a union bound on a grid
of possible temperature parameters, whereas the temperature could
be set here to a fixed value). The result
differs from Theorem \ref{th:oc} essentially by the fact that
the ratio of the determinants of the empirical and expected product
matrices has been replaced by the inverse of the smallest eigenvalue
of the quadratic form $\theta \mapsto R(\sum_{j=1}^d \theta_j\vp_j) - R(\flin)$.  
In the case when the expected Gram matrix is known,
(e.g., in the case of a fixed design, and also in the slightly
different context of transductive inference), this
smallest eigenvalue can be set to one by choosing the 
quadratic form $\theta \mapsto R(f_{\theta}) - R(\flin)$
to define the Euclidean metric on the parameter space.

Localized Rademacher complexities \cite{Kol06,BarBouMen05} allow
to prove the following property of the empirical risk minimizer.

\begin{thm} \label{th:bbm}
Assume that the input representation $\vp(X)$, the set of parameters and the output $Y$ are
almost surely bounded, i.e., for some positive constants $H$ and $C$,
  \[
  \sup_{\th\in \Theta} \|\th\| \le 1
  \]
  \[
  \ess\sup \|\vp(X)\| \le H,
  \]
and 
  \[
  |Y| \le C \quad \textnormal{a.s.}.
  \]
Let $\nu_1\ge\dots\ge\nu_d$ be the eigenvalues of the  Gram matrix $Q=\B{E} \bigl[ \vp(X) \vp(X)^T\bigr]$.
The empirical risk minimizer satisfies for any $\eps>0$,
with probability at least $1-\eps$:
	\begin{align*}
	R( \hferm ) - R(f^*) & \le \kap (H+C)^2 
	  \frac{\und{\min}{0\le h\le d} \Big( h+\sqrt{ \frac{n}{(H+C)^2} 
	  \sum_{i>h}\nu_i}\Big)+\logeps}{n}\\
  & \le \kap (H+C)^2 \frac{\textnormal{rank}(Q) +\logeps}{n},
	\end{align*}
where $\kap$ is a numerical constant.
\end{thm}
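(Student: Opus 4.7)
The strategy is a standard local Rademacher complexity analysis in the spirit of Bartlett--Bousquet--Mendelson \cite{BarBouMen05} and Koltchinskii \cite{Kol06}, specialized to the linear model so that the local complexity is governed by the eigenvalues $\nu_1 \ge \cdots \ge \nu_d$ of $Q$. The argument breaks into three steps: a Bernstein-type variance/excess-risk inequality for the squared loss, a control of the localized Rademacher average of the linear class using the spectral decomposition of $Q$, and the resolution of the resulting sub-root fixed-point equation.

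\textbf{Step 1 (Bernstein condition).} The boundedness assumptions $\|\th\|\le 1$, $\|\vp(X)\|\le H$ and $|Y|\le C$ give $\|f_\th\|_\infty \le H$, so the loss increment $\ell_\th(X,Y) = (Y-f_\th(X))^2 - (Y-f^*(X))^2$ satisfies $|\ell_\th| \le (H+C)^2$ and $\ell_\th^2 \le 4(H+C)^2 (f_\th(X)-f^*(X))^2$. Since $\cF$ is convex and $f^*$ minimises $R$ on it, the first-order variational inequality $\E[(Y-f^*(X))(f_\th(X)-f^*(X))] \le 0$ expands into $R(f_\th) - R(f^*) \ge \E[(f_\th(X)-f^*(X))^2]$, whence
\[\E[\ell_\th^2] \le 4(H+C)^2 \bigl(R(f_\th)-R(f^*)\bigr).\]
This is the Bernstein condition needed to trigger localisation.

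\textbf{Step 2 (Local Rademacher bound).} Talagrand's contraction principle, combined with Step 1, reduces the Rademacher average of the loss class $\{\ell_\th : R(f_\th)-R(f^*)\le r\}$ to $2(H+C)$ times the Rademacher average of the translated function class $\{f_\th-f^* : (\th-\th^*)^\top Q (\th-\th^*) \le r,\ \|\th\|\le 1\}$. Writing $Q = U\,\diag(\nu_1,\ldots,\nu_d)\,U^\top$, $\tilde u = U^\top(\th-\th^*)$, and $\hat\psi_i = \tfrac{1}{n}\sum_j \sigma_j (U^\top\vp(X_j))_i$, I split the inner product at an arbitrary integer $0 \le h \le d$:
\[\Big|\sum_i \tilde u_i \hat\psi_i\Big| \le \sqrt{r}\,\sqrt{\sum_{i\le h}\hat\psi_i^2/\nu_i} \;+\; 2\sqrt{\sum_{i>h}\hat\psi_i^2},\]
using the ellipsoidal constraint $\sum_i \nu_i \tilde u_i^2 \le r$ on the low-eigenvalue block and $\|\tilde u\|\le 2$ on the tail. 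Since $\E \hat\psi_i^2 = \nu_i/n$, Jensen's inequality gives a local Rademacher bound of order $(H+C)\bigl(\sqrt{rh/n} + \sqrt{n^{-1}\sum_{i>h}\nu_i}\bigr)$.

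\textbf{Step 3 (Fixed point and conclusion).} The standard localised Rademacher meta-theorem then asserts that, with probability at least $1-\eps$, the excess risk of $\hferm$ is at most $\kap\, r^* + \kap (H+C)^2 \logeps/n$, where $r^*$ is the smallest solution of the sub-root inequality $r \ge \kap (H+C)^2 \bigl(rh/n + n^{-1}\sum_{i>h}\nu_i\bigr)$ derived from Step 2. Solving this quadratic yields
\[r^* \le \kap'\,(H+C)^2 \,\frac{h + \sqrt{\,n(H+C)^{-2}\sum_{i>h}\nu_i\,}}{n},\]
which holds for every $0\le h\le d$; taking the minimum over $h$ gives the first stated inequality, while choosing $h = \mathrm{rank}(Q)$ kills the tail sum and recovers the second. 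I expect the main technical obstacle to lie in Step 2 --- namely, keeping a single set of constants while splitting the ellipsoidal constraint into a low- and a high-eigenvalue part and then turning the sub-root fixed point into the advertised $\min_h$ expression; everything else is a direct invocation of the Bartlett--Bousquet--Mendelson / Koltchinskii framework together with the Bernstein bound of Step 1.
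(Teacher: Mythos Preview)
Your approach is exactly what the paper does: its proof is a one-paragraph reference to Theorem~6.7 of \cite{BarBouMen05} applied with the linear kernel $k(u,v)=\langle u,v\rangle/(H+C)^2$ (using Corollary~5.3 and Lemma~6.5 there in place of Theorem~5.4 and Lemma~6.6), and you have simply unrolled that argument --- Bernstein condition from convexity of $\cF$, the Mendelson eigenvalue split of the local Rademacher average, and the sub-root fixed point. The only wrinkle is that the displayed fixed-point inequality in Step~3 should read $r \ge \kap (H+C)\bigl(\sqrt{rh/n}+\sqrt{n^{-1}\sum_{i>h}\nu_i}\bigr)$ (i.e.\ $r=\psi(r)$, not the squared expression you wrote), but the solution $r^*$ you extract from it is the correct one.
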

\begin{proof}
The result is a modified version of Theorem 6.7 in \cite{BarBouMen05} 
applied to the linear kernel $k(u,v)=\langle u , v \rangle/(H+C)^2$.
Its proof follows the same lines as in Theorem 6.7 {\em mutatis mutandi}:
Corollary 5.3 and Lemma 6.5 should be used as intermediate steps 
instead of Theorem 5.4 and Lemma 6.6, 
the nonzero eigenvalues of the integral operator induced 
by the kernel being the nonzero eigenvalues of $Q$.
\end{proof}

When we know that the target function $\flin$ is inside some $L^\infty$ ball, it is natural to consider the empirical risk minimizer on this ball. This allows
to compare Theorem \ref{th:bbm} to excess risk bounds with respect to
$\flin$.

Finally, from the work of Birg\'e and Massart \cite{BirMas98}, we may derive the following risk bound for the empirical risk minimizer on a $L^\infty$ ball (see Appendix \ref{sec:bm}).

\begin{thm} \label{th:bmnew}
Assume that $\cF$ has a diameter upper bounded by $H$ for the $L^\infty$-norm, i.e., for any 
$f_1,f_2$ in $\cF$, $\sup_{x\in\X} |f_1(x)-f_2(x)| \le H$ and there exists a function
$f_0 \in\cF$ satisfying the exponential moment condition:
	\beglab{eq:expmom}
	\text{for any } x\in\X, \quad 
  \E \Bigl\{ \exp \Bigl[ A^{-1} \bigl\lvert Y-f_0(X) \bigr\rvert \Bigr]\, \Big| \,X=x \Bigr\} \le M,
	\endlab
for some positive constants $A$ and $M$. Let
	\[
	\cR = \und{\inf}{\phi_1,\dots,\phi_d} \, \und{\sup}{\th\in \R^d - \{0\}} 
		\frac{\|\sum_{j=1}^d \th_j \phi_j\|_\infty^2}{\|\th\|_\infty^2}
	\]
where the infimum is taken with respect to all possible orthonormal basis of $\cF$ for 
the dot product $\lan f_1,f_2 \ran = \E f_1(X)f_2(X)$ (when the set $\cF$ admits no basis with exactly $d$ functions, we set $\cR=+\infty$).
Then the empirical risk minimizer satisfies for any $\eps>0$,
with probability at least $1-\eps$:
	\[
	R( \hferm ) - R(f^*) \le \kap (A^2 + H^2) \frac{d \log[2+(\cR /n)\wedge (n/d)] +\logeps}{n},
	\]
where $\kap$ is a positive constant depending only on $M$.
\end{thm}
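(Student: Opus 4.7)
The plan is to adapt the Birgé–Massart model-selection machinery to the single model $\cF$. The proof proceeds in four steps.

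\textbf{Step 1 (Truncation).} The exponential moment condition \eqref{eq:expmom} combined with a union bound shows that on an event $\Omega_0$ of probability at least $1-\eps/2$ one has $\max_{1\le i\le n}|Y_i-f_0(X_i)|\le \tau \eqdef A\log(2Mn/\eps)$. Since $\sup_{f\in\cF}\|f-f_0\|_\infty\le H$, every residual $Y_i-f(X_i)$ with $f\in\cF$ is then bounded in absolute value by $\tau+H$. We may consequently work on $\Omega_0$ as if the noise were bounded, the extra $\log n$ from $\tau$ being swallowed by the multiplicative $A^2$ once it appears inside a variance.

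\textbf{Step 2 (Offset decomposition and localization).} Set $h=f-f^*$ and $\xi=Y-f^*(X)$. The convex optimality of $f^*$ on $\cF$ yields $\E[\xi\,h(X)]\le 0$ for every $f\in\cF$, so $R(f)-R(f^*)\ge\E h(X)^2=\|h\|_2^2$. Using $r(\hferm)\le r(f^*)$,
\[
R(\hferm)-R(f^*)\le \bigl[(R-r)(\hferm)-(R-r)(f^*)\bigr]=\frac{2}{n}\sum_{i=1}^n h_{\hferm}(X_i)\,\xi_i - 2\E[\xi\,h_{\hferm}(X)] + \Big(\E h_{\hferm}^2-\frac{1}{n}\sum_i h_{\hferm}^2(X_i)\Big).
\]
Introducing the slice $\cF_t=\{f\in\cF:\|f-f^*\|_2^2\le t\}$, we aim to show that on an event of probability at least $1-\eps/2$,
\[
\sup_{f\in\cF_t}\bigl[(R-r)(f)-(R-r)(f^*)\bigr]\le \tfrac12 t + \kap(A^2+H^2)\frac{d\,L+\logeps}{n},\qquad L \eqdef \log\bigl[2+(\cR/n)\wedge(n/d)\bigr].
\]
A standard peeling over scales $t_k=2^k\cdot\kap(A^2+H^2)(dL+\logeps)/n$, combined with the lower bound $R(\hferm)-R(f^*)\ge\|\hferm-f^*\|_2^2$, then yields the theorem, the union bound cost being absorbed in $\kap$.

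\textbf{Step 3 (Chaining via the Nikol'skii constant).} For the localized empirical process above, pick an orthonormal basis $(\phi_j)_{j=1}^d$ of $\cF$ realising (up to a constant) the infimum in the definition of $\cR$, and write $h=\sum_j\alpha_j\phi_j$. Then $\|h\|_2=\|\alpha\|_2$ and $\|h\|_\infty\le\sqrt{\cR}\|\alpha\|_\infty\le\sqrt{\cR}\|h\|_2$, so the $L^2$-ball of radius $r$ in $\cF-f^*$ has $L^\infty$-diameter at most $2\sqrt{\cR}\,r$. Applying Bernstein's inequality along a chain of successive $L^2$-nets of $\cF_t-f^*$, with per-increment variance proxied by $r^2(\tau+H)^2$ and per-increment $L^\infty$-bound by $\sqrt{\cR}\,r$, controls both the noise term and the quadratic term. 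The relevant chaining integral over a $d$-dimensional ball gives a leading $\sqrt{td/n}$ factor (absorbed into $\tfrac12 t$ by Young's inequality) together with a logarithmic overhead of order $\log[2+\sqrt{\cR t}/r_0]$ at the coarsest scale $r_0\asymp\sqrt{t/n}$; this precisely delivers the factor $\log[2+(\cR/n)\wedge(n/d)]$, the minimum arising because when $\cR$ is large one can alternatively dispense with the $L^\infty$ chain and use the trivial bound $\|h\|_\infty\le\tau+H$ valid on $\Omega_0$.

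\textbf{Main obstacle.} The delicate point is Step 3: obtaining the precise factor $\log[2+(\cR/n)\wedge(n/d)]$ instead of $\log n$ requires carefully balancing three quantities — the $L^\infty$-norm bound coming from $\cR$, the residual truncation level $\tau$, and the Bernstein variance of the relative loss — so that in the Talagrand/Bernstein bound the variance term dominates the Lipschitz term down to the scale $r_0\asymp\sqrt{(d+\logeps)/n}$. The rest of the argument is a routine instance of the Birgé–Massart recipe.
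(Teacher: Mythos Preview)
Your overall strategy---localize, chain, peel---is the right flavor, but there is a genuine gap in Step~1 that propagates through the rest of the argument and prevents you from reaching the stated bound. You truncate the noise at level $\tau=A\log(2Mn/\eps)$ and then use $\tau+H$ as the envelope in Step~3, writing ``per-increment variance proxied by $r^2(\tau+H)^2$''. That variance proxy is $r^2(A\log n + H)^2$, not $r^2(A^2+H^2)$, and the claim that ``the extra $\log n$ from $\tau$ [is] swallowed by the multiplicative $A^2$ once it appears inside a variance'' is simply false: after the chaining and the peeling you end up with a leading factor $(A\log n + H)^2$ instead of $A^2+H^2$, i.e.\ an extra $\log^2 n$. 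The paper avoids this entirely by \emph{not} truncating: it feeds the exponential moment \eqref{eq:expmom} directly into Birg\'e--Massart's moment condition (their M2), namely $\E\bigl[M(W)^m\,|\,X\bigr]\le \tfrac{m!}{2}[4M(A+H)]^m$ with $M(w)=2(|w|+H)$, and then invokes their Theorem~5 as a black box. This Bernstein-via-moments route is what produces the clean $A^2+H^2$.

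There is a second, independent problem in Step~3. Your heuristic for the logarithmic overhead gives $\log\bigl[2+\sqrt{\cR t}/r_0\bigr]$ with $r_0\asymp\sqrt{t/n}$, which is $\log\bigl[2+\sqrt{\cR n}\,\bigr]$---the wrong sign in the exponent of $n$. The paper obtains $\log[2+(\cR/n)\wedge(n/d)]$ not from a single chaining but by making \emph{two separate choices} of the parameters $(B',r)$ in Birg\'e--Massart's Assumption~M2: one choice ($B'=6$, $r\sqrt d=\sqrt\cR$) uses the Nikol'skii-type bound and yields the $\cR/n$ branch, the other ($B'=3\sqrt{n/d}$, $r=1$) uses only the crude $L^\infty$ covering of Lemma~\ref{le:infty} and yields the $n/d$ branch; the minimum comes from taking whichever is better. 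Your single chaining argument does not reproduce this dichotomy, and the sketch of ``dispensing with the $L^\infty$ chain'' via $\|h\|_\infty\le\tau+H$ reintroduces the $\log n$ from Step~1.
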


This result comes closer to what we are looking for: it gives exponential deviation inequalities of order at worse 
$\fracc{d \log(n/d)}{n}$.
It shows that, even if the Gram matrix $Q$
has a very small eigenvalue, there is an algorithm satisfying a convergence rate of order 
$\fracc{d \log(n/d)}{n}$. With this respect, this result is stronger than Theorem \ref{th:alqa}. However there
are cases in which the smallest eigenvalue of $Q$ is of order $1$, while $\cR$ is large (i.e., $\cR \gg n$). In these cases, 
Theorem \ref{th:alqa} does not contain the logarithmic factor which appears in Theorem \ref{th:bmnew}.

\subsection{Projection estimator} \label{sec:proj}

When the input distribution is known, an alternative to the ordinary least squares estimator is 
the following projection estimator. One first finds an orthonormal 
basis of $\Flin$ for the dot product $\lan f_1,f_2 \ran = \E f_1(X)f_2(X)$, 
and then uses the projection estimator on this basis. Specifically, if $\phi_1,\dots,\phi_d$ form an orthonormal basis of $\Flin$, then the projection estimator on this basis is:
	$$
	\hfproj= \sum_{j=1}^d \thproj_j \phi_j,
	$$
with 
	$$
	\thproj= \frac{1}{n} \sum_{i=1}^n Y_i \phi_j(X_i).
	$$
The following excess risk bound of order $d/n$ for this estimator is Theorem 4 in \cite{Tsy03} up to minor changes in the assumptions.

\begin{thm} \label{th:proj}
If $\sup_{x\in\X} \Var(Y |X=x) = \sigma^2 < +\infty$
and \[\|\freg\|_\infty = \sup_{x\in\X} |\freg(x)| \le H < +\infty,\] then we have
	\beglab{eq:tsy}
	\E R(\hfproj) - R(\flin) \le (\sigma^2+H^2) \frac{d}{n}.
	\endlab
\end{thm}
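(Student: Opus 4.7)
The plan is to reduce the excess risk to the variance of the empirical coefficients $\thproj_j$ and then control those variances using the conditional variance bound and the bound on $\freg$.

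First, I would expand $\flin$ in the orthonormal basis: $\flin = \sum_{j=1}^d \th^*_j \phi_j$ with $\th^*_j = \langle \flin, \phi_j \rangle = \E[\flin(X)\phi_j(X)]$. The key observation is that since $\flin$ is the orthogonal projection of $\freg$ onto $\Flin$ in $L^2(P_X)$, and since $\phi_j \in \Flin$, we have $\th^*_j = \E[\freg(X)\phi_j(X)]$. By the tower property and the definition of $\freg$, this equals $\E[Y\phi_j(X)]$. Hence $\thproj_j = \frac{1}{n}\sum_{i=1}^n Y_i \phi_j(X_i)$ is an unbiased estimator of $\th^*_j$.

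Next, I would use orthonormality of $\phi_1,\dots,\phi_d$ (Pythagoras) to write
\[
R(\hfproj) - R(\flin) \;=\; \E\bigl[(\hfproj(X)-\flin(X))^2 \,\big|\, Z_1,\dots,Z_n\bigr] \;=\; \sum_{j=1}^d (\thproj_j - \th^*_j)^2,
\]
where I have used that $\flin$ is the $L^2(P_X)$-projection of $\freg$, so $\hfproj-\flin \in \Flin$ is orthogonal to $\freg-\flin$. Taking expectations and using the i.i.d.\ structure,
\[
\E R(\hfproj) - R(\flin) \;=\; \sum_{j=1}^d \Var(\thproj_j) \;=\; \frac{1}{n}\sum_{j=1}^d \Var\bigl(Y\phi_j(X)\bigr).
\]

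Finally, I would bound each $\Var(Y\phi_j(X)) \le \E[Y^2\phi_j(X)^2]$ and decompose $Y^2 = (Y-\freg(X))^2 + 2(Y-\freg(X))\freg(X) + \freg(X)^2$. Conditioning on $X$, the cross term vanishes since $\E[Y-\freg(X)\mid X]=0$. The first term contributes at most $\sigma^2 \E[\phi_j(X)^2] = \sigma^2$ by the conditional variance assumption and orthonormality, and the second contributes at most $H^2 \E[\phi_j(X)^2] = H^2$ by the bound $\|\freg\|_\infty \le H$. Summing over $j$ yields $(\sigma^2+H^2)d/n$.

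There is no real obstacle here; the only point worth noting is the identification $\th^*_j = \E[Y\phi_j(X)]$, which crucially uses that $\phi_j$ belongs to $\Flin$ (so projecting $\freg$ onto $\phi_j$ is the same as projecting $\flin$ onto $\phi_j$), and the vanishing of the cross term when bounding $\E[Y^2\phi_j(X)^2]$, which relies on the defining property of $\freg$ as a conditional expectation.
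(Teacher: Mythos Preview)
Your proof is correct and is the standard argument; the paper does not give its own proof but simply attributes the result to Tsybakov \cite{Tsy03}, whose argument is essentially the one you wrote. Each step is sound: the identification $\th^*_j=\E[Y\phi_j(X)]$ via the projection property, the Pythagorean reduction $R(\hfproj)-R(\flin)=\sum_j(\thproj_j-\th^*_j)^2$, and the variance bound $\Var(Y\phi_j(X))\le\E[Y^2\phi_j(X)^2]\le(\sigma^2+H^2)$ using $\E[Y^2\mid X]=\Var(Y\mid X)+\freg(X)^2$ and orthonormality.
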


\subsection{Penalized least squares estimator}

It is well established that parameters of the ordinary least squares estimator are numerically unstable, and that the phenomenon can be corrected
by adding an $L^2$ penalty (\cite{Lev44,Ril55}). This solution has been labeled ridge regression in statistics (\cite{Hoe62}), and consists 
in replacing $\hfols$ by $\hfrlam=f_{\thrlam}$ with
	\[
	\thrlam \in \und{\argmin}{\th\in\R^d} 
	  \bigg\{ r(f_\th) + \lam \sum_{j=1}^d \theta_j^2 \bigg\},
	\]
where $\lam$ is a positive parameter. 
The typical value of $\lam$ should be small to avoid excessive shrinkage of the coefficients, but not too small in
order to make the optimization task numerically more stable.

Risk bounds for this estimator can be derived from general results concerning penalized least squares on reproducing kernel Hilbert spaces
(\cite{CapVit07}), but as it is shown in Appendix \ref{sec:capvit}, this ends up with complicated results having
the desired $d/n$ rate only under strong assumptions. 

Another popular regularizer is the $L^1$ norm. This procedure is known 
as Lasso \cite{Tib94} and is defined by
	\[
	\thllam \in \und{\argmin}{\th\in\R^d} 
		\bigg\{ r(f_\th) + \lam \sum_{j=1}^d |\theta_j| \bigg\}.
	\]
As the $L^2$ penalty, the $L^1$ penalty shrinks the coefficients. The difference is that for coefficients 
which tend to be close to zero, the shrinkage makes them equal to zero. This allows to select relevant 
variables (i.e., find the $j$'s such that $\th^*_j\neq 0$). 
If we assume that the regression function $\freg$ is a linear combination of only $d^* \ll d$ variables/functions $\vp_j$'s,
the typical result is to prove that the risk of the Lasso estimator for $\lam$ of order $\sqrt{\fracl{\log d}{n}}$
is of order $\fracl{d^* \log d}{n}$. Since this quantity is much smaller than $d/n$, this makes a huge improvement (provided that
the sparsity assumption is true). This kind of results usually requires strong conditions on the
eigenvalues of submatrices of $Q$, essentially assuming that the functions 
$\vp_j$ are near orthogonal. We do not know to which extent these 
conditions are required. However, if we do not consider the specific algorithm of Lasso, but the model selection approach developed in \cite{Alq08}, one can change these conditions into a single condition concerning only the minimal eigenvalue of the submatrix of $Q$ corresponding to relevant variables. In fact, we will see that even this condition can be removed.

\subsection{Conclusion of the survey} 

Previous results clearly leave room to improvements. 
The projection estimator requires the unrealistic assumption that the input distribution is known,
and the result holds only in expectation. Results using $L^1$ or $L^2$ regularizations require strong assumptions, in particular on the eigenvalues of (submatrices of) $Q$.
Theorem \ref{th:weakols} provides a $(d \log n)/n$ convergence rate only when the 
$R(\flin)-R(\freg)$ is at most of order $(d \log n)/n$. 
Theorem \ref{th:oc} gives a different type of guarantee: the $d/n$ is 
indeed achieved, but the random ratio of determinants appearing 
in the bound may raise some eyebrows and forbid an explicit
computation of the bound and comparison with other bounds.
Theorem \ref{th:alqa}
seems to indicate that the rate of convergence will
be degraded when the Gram matrix $Q$ is unknown and ill-conditioned. 
Theorem \ref{th:bbm} does not put any assumption on $Q$ to reach the $d/n$ rate, 
but requires particular boundedness constraints on the output.
Finally, Theorem \ref{th:bmnew} comes closer to what we are looking for. Yet there is still an unwanted logarithmic factor, and 
the result holds only when the output has uniformly bounded 
conditional exponential moments, which as we will show is not necessary.

Our recent work \cite{AudCat110}
provides a risk bound for ridge regression showing the benefit on the
effective dimension of the shrinkage parameter $\lam$ and being of order $d/n$ (without logarithmic factor).
The work \cite{AudCat110} also proposes a robust estimator for linear least squares, which satisfies a $d/n$ excess risk bound without logarithmic factor, but with constants involving several kurtosis coefficients. As discussed in Section 3.2 of \cite{AudCat110}, depending on the basis functions and the distribution $P$, these kurtosis coefficients typically behave either as numerical constants or $\sqrt{d}$ (but worse non-asymptotic behaviors of these constants can also occur).

Finally, several works, and in particular those cited in Section \ref{sec:ols}, have considered the 
problem of model selection where several linear spaces are simultaneously considered, and the goal is to predict as well as the
best function in the union of the linear spaces.
Only a few of them considered the case of outputs having only finite conditional moments (and not finite conditional exponential moments).
This is the case of \cite{Bar00} in the fixed design setting and \cite{Weg03} in the random design setting. The excess risk bounds 
there are typically of order $d/n$ with $d$ the dimension of the ``best'' linear space, but holds in expectation and essentially when
the optimal regression function $\freg$ belongs to the union of linear spaces. 

\section{A simple tight risk bound for a sophisticated PAC-Bayes algorithm} \label{sec:main}

In this section, we provide a sophisticated estimator, 
having a simple theoretical excess risk bound, 
with neither a logarithmic factor, nor complex constants involving the conditioning of $Q$, kurtosis coefficients or some
geometric quantity characterizing the relation between $L^2$ and $L^\infty$-balls.

We consider that the set $\cC$ is bounded so that
we can define the ``prior'' distribution $\pi$ as the uniform distribution on $\cF$ 
(i.e., the one induced by the Lebesgue distribution on $\cC\subset \R^d$ 
renormalized to get $\pi(\cF)=1$).  
Let $\lam>0$ and 
	\[W_i(f,f') = \lam \bigl\{ \bigl[ Y_i-f(X_i)\bigr]^2 - \bigl[
Y_i-f'(X_i) \bigr]^2 \bigr\}. \]
Introduce 
	\beglab{eq:hcEls}
	\hcE(f) = \log \int \frac{\pi(df')}{\prod_{i=1}^n [ 1 - W_i(f,f') + \frac{1}{2} W_i(f,f')^2]}.
	\endlab
We consider the ``posterior'' distribution $\hpi$ on the set $\cF$ with density:
	\beglab{eq:hpils}
	\frac{d \hpi}{d \pi} (f) = \frac{\ds \exp [-\hcE(f)]}{\ds \tint
\exp [-\hcE(f') ] \pi(df')}.
	\endlab
To understand intuitively why this distribution concentrates on functions with low risk,
one should think that when 
$\lam$ is small enough, $1-W_i(f,f') + \frac{1}{2} W_i(f,f')^2$
is close to $e^{-W_i(f,f')}$, and consequently
  \[
  \hcE(f) \approx \lam \sum_{i=1}^n [Y_i-f(X_i)]^2 + \log \int \pi(df') \exp 
\Bigl\{-\lam \sum_{i=1}^n \bigl[Y_i-f'(X_i)\bigr]^2 \Bigr\},
  \]
and
  \[
  \frac{ d \hpi}{d \pi} (f) \approx 
\frac{\exp \{ -\lam \sum_{i=1}^n [Y_i-f(X_i)]^2 \} }{\int \exp 
\{ -\lam \sum_{i=1}^n [Y_i-f'(X_i)]^2 \} \pi(df')}\,.
  \]
The following theorem gives a $d/n$ convergence rate for the randomized algorithm which draws the prediction function from $\cF$
according to the distribution $\hpi$.

\begin{thm} \label{th:main}
Assume that $\cF$ has a diameter upper bounded by $H$ for the $L^\infty$-norm:
	\beglab{eq:hh}
	\sup_{f_1,f_2\in \cF,x\in\X} |f_1(x)-f_2(x)| \le H
	\endlab
and that, for some $\sigma>0$, 
	\beglab{eq:sigma}
	\sup_{x\in\X} \E\big\{[Y-f^*(X)]^2 \big| X=x\big\} \le \sigma^2 < +\infty.
	\endlab
Let $\hf$ be a prediction function drawn from the distribution $\hpi$
defined in \eqref{eq:hpils} and depending on the parameter $\lam>0$.
Then for any $0<\eta'<1-\lam (2\sigma+H)^2$ and $\eps>0$, with probability (with respect to the distribution $P^{\otimes n} \hpi$ generating
the observations $Z_1,\dots,Z_n$ and the randomized prediction function $\hf$) at least $1-\eps$, we have
	\[
	R( \hf ) - R(f^*) \le (2\sigma+H)^2 \, \frac{C_1 d + C_2 \log(2\eps^{-1})}{n} 
	\]
with 
	\[
	C_1= \frac{\log(\frac{(1+\eta)^2}{\eta'(1-\eta)}) }{\eta (1-\eta-\eta')} \quad \text{and} \quad
	C_2=\frac{2}{\eta(1-\eta- \eta')} \quad \text{and} \quad \eta= \lam (2\sigma+H)^2.
	\]
In particular for $\lam=0.32 (2\sigma+H)^{-2}$ and $\eta'=0.18$, we get
	\[
	R( \hf ) - R(f^*) \le (2\sigma+H)^2 \, \frac{16.6 \, d 
    		+ 12.5 \log(2\eps^{-1})}{n}.	
	\]
Besides if $f^*\in\argmin_{f\in\Flin} R(f)$, then with
probability at least $1-\eps$, we have
	\[
	R( \hf ) - R(f^*) \le (2\sigma+H)^2 \, \frac{8.3 \, d 
    		+ 12.5 \log(2\eps^{-1})}{n}.	
	\]
\end{thm}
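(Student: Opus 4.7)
The plan is to derive Theorem~\ref{th:main} as a specialization of the general PAC-Bayesian machinery for strongly convex losses that I expect forms Section~\ref{sec:gen}. The proof has three movements: (i) a variance-type moment bound that converts the second-moment assumption \eqref{eq:sigma} and the diameter assumption \eqref{eq:hh} into the scale $(2\sigma+H)^2$; (ii) an exponential-moment bound for the product $\prod_i V_i(f,f')$, where $V_i(f,f') = 1 - W_i(f,f') + \tfrac12 W_i(f,f')^2 \ge \tfrac12$ is the truncated proxy of $e^{-W_i}$; and (iii) the PAC-Bayes step that turns this into an in-probability bound on $\int R(f)\,\hpi(df) - R(f^*)$, followed by a second Markov step to pass to the randomized prediction.

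For (i), the factorization $(Y-f(X))^2 - (Y-f^*(X))^2 = (f^*(X)-f(X))\bigl[2(Y-f^*(X)) + (f^*(X)-f(X))\bigr]$ together with Minkowski in $L^2(\P(\cdot\mid X))$ yields
\[
\E\bigl[((Y-f(X))^2-(Y-f^*(X))^2)^2 \bigm| X\bigr] \le (f(X)-f^*(X))^2\,(2\sigma+H)^2,
\]
using only \eqref{eq:sigma} and \eqref{eq:hh}; crucially, no exponential moment on $Y$ is required. For (ii), $V_i \ge 1/2$ guarantees that $\log V_i$ is finite, and, exploiting independence of the $Z_i$ together with $\log(1+x)\le x$, I would show that for every $f,f' \in \cF$
\[
\E \prod_i V_i(f,f')^{-1} \le \exp\Bigl(n\lam\bigl[R(f)-R(f')\bigr] + \tfrac{n\lam^2}{2}(2\sigma+H)^2\,\E\bigl[(f(X)-f'(X))^2\bigr]\Bigr),
\]
up to lower-order corrections; the symmetry $(f,f')\leftrightarrow(f',f)$ allows $f^*$ to be inserted as the pivot so that the effective variance scale is precisely $(2\sigma+H)^2$.

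For (iii), the Gibbs variational identity
\[
\int \hcE(f)\,\hpi(df) + \mathrm{KL}(\hpi\|\pi) = -\log \iint \frac{\pi(df)\pi(df')}{\prod_i V_i(f,f')}
\]
combined with Markov on the right-hand exponential produces, with probability $\ge 1-\eps/2$, an inequality of the form $\lam n \int [R(f)-R(f^*)]\,\hpi(df) \le C_1' d + C_2' \log(2/\eps)$, in which the dimension $d$ enters through the KL cost of concentrating $\pi$ on a small $L^\infty$-ball around $f^*$. A second Markov applied to the nonnegative random variable $R(\hf) - R(f^*)$ under $\hpi$, at cost $\eps/2$ and combined by union bound, converts the $\hpi$-averaged bound into the in-probability statement for $\hf \sim \hpi$ announced in the theorem. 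Optimizing $\lam = \eta/(2\sigma+H)^2$ and $\eta'$ then produces the explicit numerical constants; the improvement by a factor of $2$ in $C_1$ when $f^* \in \arg\min_{\Flin}R$ follows because first-order optimality $\E[(Y-f^*(X))(f(X)-f^*(X))]=0$ for every $f\in\Flin$ cancels a cross term in the expansion of $\E V_i(f,f^*)$, halving the effective variance proxy.

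The main obstacle, which I expect the general-loss theorem in Section~\ref{sec:gen} handles once and for all, is step (ii): the symmetric product $\prod V_i(f,f')$ most naturally invites a variance bound involving $\E[(f(X)-f'(X))^2]$, and it is not immediate that this can be upgraded to the centred-at-$f^*$ form needed for a clean $d/n$ bound. The fix is to exploit the $+\tfrac12 W^2$ truncation term to absorb the cross variance between $f-f^*$ and $f'-f^*$---this is exactly the place where truncation of loss differences improves on a naive Hoeffding-type analysis that would require an exponential moment assumption on $Y$.
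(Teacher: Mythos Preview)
Your movement (i) is exactly the paper's Lemma~\ref{le:v2}, and your instinct that truncation via $1-W+\tfrac12W^2$ replaces an exponential-moment hypothesis by a second-moment one is the right diagnosis. But movements (ii)--(iii) contain two genuine gaps that would prevent you from reaching the stated bound.

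First, the ``second Markov'' step does not give $\log(2\eps^{-1})$ deviations. If all you know is that $\int \bR(f)\,\hpi(df)\le B$ with probability $1-\eps/2$, then Markov on the nonnegative variable $\bR(\hf)$ under $\hpi$ only yields $\bR(\hf)\le 2B/\eps$ with probability $1-\eps$, a polynomial rather than logarithmic dependence on $1/\eps$. The paper avoids this by never passing through the posterior mean: Theorem~\ref{th:gen} controls $\E\bigl[\int \exp(V_1(\hf))\,\rho(d\hf)\bigr]\le 1$ with $V_1$ a functional of the \emph{random draw} $\hf$ itself, so a single exponential Chebyshev (Lemma~\ref{le:pac}) delivers the in-probability bound on $\bR(\hf)$ directly, jointly over $(Z_1,\dots,Z_n,\hf)$.

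Second, and more structurally, you have no localization. Your plan lets $d$ enter through ``the KL cost of concentrating $\pi$ on a small $L^\infty$-ball around $f^*$''; with a flat prior this KL is of order $d\log(n\lam H^2)$, which produces the very $\log n$ factor the theorem claims to remove. The paper's mechanism is different: in Theorem~\ref{th:gen} the reference measure is not $\pi$ but the \emph{risk-tilted} prior $\pi_{-\gamma\bR}$, and the bound involves only \emph{differences} of log-partition functions $\I(\gamma^*)-\I(\gamma)+\I(\lam n(1+\eta))-\I(\lam n(1-\eta))$. Condition~C (Lemma~\ref{le:complexity}) then bounds each such ratio by $D\log(\beta/\alpha)$ with $D=d$ and $\beta/\alpha$ a \emph{bounded} constant independent of $n$; this is what kills the logarithm.

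Finally, your explanation of the factor-of-$2$ improvement is misplaced: it does not come from a cancelled cross term in the variance proxy, but from Lemma~\ref{le:complexity}. When $f^*\in\argmin_{\Flin}R$, the first-order term $c_\phi$ vanishes for every direction $\phi$, and the partition-function ratio scales like $(\beta/\alpha)^{d/2}$ instead of $(\beta/\alpha)^{d}$, so Condition~C holds with $D=d/2$ rather than $D=d$.
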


\begin{proof}
This is a direct consequence of Theorem \thmref{th:v2c},
Lemma \thmref{le:complexity} and Lemma \thmref{le:v2}.
\end{proof}

If we know that $\flin$ belongs to some bounded ball in 
$\Flin$, then one can define a bounded $\cF$ as this ball, use the previous 
theorem and obtain an excess risk bound with respect to
$\flin$.


\begin{rmk} \label{rmk:main}
Let us discuss this result. On the positive side, we have a $d/n$ convergence rate in expectation and in deviations.
It has no extra logarithmic factor. It does not require any particular assumption on the smallest eigenvalue of the covariance matrix.
To achieve exponential deviations, a uniformly bounded second moment of the output knowing the input is surprisingly sufficient: we do not
require the traditional exponential moment condition on the output.
Appendix \thmref{sec:lb} argues that the uniformly bounded conditional second moment assumption 
cannot be replaced with just a bounded second moment condition.

On the negative side, the estimator is rather complicated. 
With nowadays computers and numerical methods, it seems impossible to get a good approximation of it even when the dimension $d$ is small. Nevertheless, in presence of a heavy-tailed noise distribution, it can be a way to move from the empirical risk minimizer (which is the baseline estimator for linear regression) in the right direction (that is in a direction in which one can find an estimator having a smaller risk than the one of the empirical risk minimizer).
When the target is to predict as well as the best linear combination $\flin$ up to a small additive term, the estimator requires the knowledge 
of a $L^\infty$-bounded ball in which $\flin$ lies and an upper bound
on $\sup_{x\in\X} \E\big\{[Y-\flin(X)]^2 \big| X=x\big\}$.
The looser this knowledge is, the bigger the constant in front of $d/n$ is.
Note that the possible lack of knowledge of $H$ and $\sigma$ call for 
a model selection algorithm, which goes beyond the scope of this work.
In practice, a careful application of (cross-)validation ideas would probably be sufficient to select these parameters.
\end{rmk}

\begin{rmk}
The proposed randomized estimator is more complex than the classical Gibbs estimator (that is the one with exponential weights involving the empirical risk). Even if the paper does not prove it, (we believe that) the classical Gibbs estimator cannot be robust to heavy-tailed noise. This belief is motivated by the same arguments as the ones used in \cite{Cat09} to show the absence of robustness of the empirical mean estimator. In absence of heavy-tailed noise, the classical Gibbs estimator satisfies a similar result to Theorem \ref{th:main}, given in Theorem \ref{th:v1c}.

Our randomized algorithm consists in drawing the prediction function according to $\hpi$.
As usual, by convexity of the loss function, the risk of the deterministic estimator $\hf_{\text{determ}} = \int f \hpi(df)$
satisfies $R(\hf_{\text{determ}}) \le \int R(f) \hpi(df)$, so that, after some 
computations, one can prove that 
for any $\eps>0$, with probability at least $1-\eps$:
	\[
	R( \hf_{\text{determ}} ) - R(\flin) \le \kap (2\sigma+H)^2\frac{d + \log(\eps^{-1})}{n},
	\]
for some appropriate numerical constant $\kap>0$. 
\end{rmk}

\begin{rmk}
We consider a ``prior'' distribution $\pi$, which is a uniform distribution on $\cF$.  
In presence of sparsity (when only a small number of the coefficients $\theta^*_j$ in \eqref{eq:fstar} are nonzero), alternative prior distributions (of Laplace form) are useful in fixed design regression \cite{DalTsy08,DalTsy11b,AlqLou11} and in the random design scenario \cite{DalTsy11,AlqLou11}. When the coefficient vector $\th^*$ is non-sparse (which is not the focus of these works), the latter papers prove a $\frac{d \log n}n$ risk bound when the noise distribution admits at least sub-exponential tails.
\end{rmk}

\begin{rmk}
Theorem \ref{th:main} expresses boundedness in terms of the $L^\infty$ diameter of the 
set of functions $\cF$. Besides, \eqref{eq:sigma} implies that the function
$\freg:x \mapsto \E[ Y | X=x]$ satisfies $\freg(X)-f^*(X)\le \sigma$ almost surely.
By using Lemma \thmref{le:v3} instead of Lemma \thmref{le:v2},
Theorem \ref{th:main} still holds without assuming \eqref{eq:hh} and \eqref{eq:sigma},
when replacing $(2\sigma+H)^2$ with 
  \begin{align*}
  V= \bigg[ 2& \sqrt{\sup_{f\in\Flin: \E[f(X)^2]=1}\E\big(f(X)^2[Y-f^*(X)]^2\big)}\\
    & \qquad + \sqrt{\sup_{f',f''\in\cF} \E\big([f'(X)-f''(X)]^2\big)} \sqrt{\sup_{f\in\Flin: \E[f(X)^2]=1}\E\big[f(X)^4\big]} \bigg]^2.
  \end{align*}
The quantity $V$ is finite when simultaneously, $\Theta$ is bounded, and for any $j$ in $\{1,\dots,d\}$, the
quantities $\E\big[\vp_j^4(X)\big]$ and $\E\big\{\vp_j(X)^2[Y-f^*(X)]^2\big\}$ 
are finite.    
\end{rmk}

\section{A generic localized PAC-Bayes approach} \label{sec:gen}

\subsection{Notation and setting}

In this section, we drop the restrictions of the linear least squares setting considered so far
in order to focus on the ideas underlying the estimator and the results presented in Section~\ref{sec:main}. 
To do this, we consider that the loss incurred by predicting $y'$ while the correct output is $y$ is $\ela(y,y')$
(and is not necessarily equal to $(y-y')^2$).
The quality of a (prediction) function $f:\X\rightarrow\R$ is measured by its risk 
	\[R(f) = \E \bigl\{ \ela\bigl[Y,f(X) \bigr] \bigr\}.\]
We still consider the problem of predicting (at least) as well as the best function 
in a given set of functions $\cF$ (but $\cF$ is not necessarily a subset of a finite dimensional linear space).
Let $f^*$ still denote a function minimizing the risk among functions in $\cF$: $f^* \in\undc{\argmin}{f\in\cF} R(f)$.
For simplicity, we assume that it exists. 
The excess risk is defined as 
	\[\bR(f) = R(f) - R(f^*).\]

Let $\ell: \Z\times\cF\times\cF \ra \R$ be a function 
such that $\ell(Z,f,f')$ represents\footnote{While the natural choice in the least squares setting is
$\ell((X,Y),f,f')=[Y-f(X)]^2 - [Y-f'(X)]^2$, we will see that
for heavy-tailed outputs, it is preferable to consider the following soft-truncated
version of it, up to a scaling factor $\lam>0$: $\ell((X,Y),f,f')=T\big(\lam\big[(Y-f(X))^2 - (Y-f'(X))^2\big]\big)$,
with $T(x)=-\log(1-x+x^2/2).$ Equality \myeq{eq:alih} corresponds to \myeq{eq:hcEls} with this choice of function $\ell$
and for the choice $\pis=\pi$.
}
how worse $f$ predicts than 
$f'$ on the data $Z$. 
Let us introduce the real-valued random processes $L: (f,f') \mapsto \ell(Z,f,f')$ and 
$L_i: (f,f') \mapsto \ell(Z_i,f,f')$, where $Z,Z_1,\dots,Z_n$ 
denote i.i.d. random variables with distribution $P$.

Let $\pi$ and $\pis$ be two (prior) probability distributions on $\cF$. 
We assume the following integrability condition.

{\bf Condition I.} \label{cond:i}
For any $f\in\cF$, we have 
\begin{align}
	\label{eq:intega}
	\int \E \bigl\{ \exp [ L(f,f') ]  \bigr\}^n \pis(df') & < +\infty,\\
\text{and } \quad
	\label{eq:integb}
	\int \frac{ \pi(df) }{\int \E \bigl\{ 
\exp [ L(f,f') ] \big\}^n \pis(df')} & < +\infty.
\end{align}
We consider the real-valued processes
	\begin{align}
	\hL(f,f')& = \sum_{i=1}^n L_i(f,f'),\\
	\hcE(f) & =\log \int \exp \bigl[ \hL(f,f') \bigr] \, \pis(df') \label{eq:alih},\\
	\La(f,f')& = - n \log \Bigl\{ \E \Bigl[ \exp\bigl(-L(f,f')\bigr) \Bigr] \Bigr\}, \\
	\Lb(f,f')& = n \log \Bigl\{ \E \Bigl[ \exp \bigl( L(f,f')\bigr) \Bigr] \Bigr\}, \\ 
\text{and } \quad	\cEb(f)& = \log \biggl\{ \int \exp \bigl[ \Lb(f,f') \bigr] 
\, \pis(df') \biggr\}. \label{eq:alib}
	\end{align}	
Essentially, the quantities $\hL(f,f')$, $\La(f,f')$ and $\Lb(f,f')$ represent 
how worse is the prediction from $f$ than from $f'$ with respect to the training data or in expectation. 
By Jensen's inequality, we have 
	\beglab{eq:interv}
	\La \le n \E (L) = \E (\hL) \le \Lb.
	\endlab
The quantities $\hcE(f)$ and $\cEb(f)$ should be understood as some kind of (empirical or expected) 
excess risk of the prediction function~$f$ with respect to an implicit reference induced by the integral over $\cF$.

For a distribution $\rho$ on $\cF$ absolutely continuous $\wrt$  $\pi$, let 
$\ds \frac{d \rho}{d \pi}$ denote the 
density of $\rho$ $\wrt$ $\pi$. For any real-valued (measurable) function $h$ defined on $\cF$ such that
$\int \exp [h(f)] \pi(df)<+\infty$, we define the distribution 
$\pi_{h}$ on $\cF$ by its density:
	\beglab{eq:pih}
	\frac{d \pi_{h}}{d \pi}(f) = 
\frac{\ds \exp [ h(f)]}{\ds \tint \exp [h(f')] \pi(df')}.
	\endlab
We will use the posterior distribution:
	\beglab{eq:hpi}
	\frac{d \hpi}{d \pi} (f) = 
\frac{ d \pi_{-\hcE}}{d \pi}(f) = 
\frac{\exp[-\hcE(f)]}{\int \exp[-\hcE(f')] \pi(df')}.
	\endlab	
Finally, for any $\be \ge 0$, we will use the following measures of the size (or complexity) of $\cF$ around 
the target function:
    \[
    \I^*(\be) = - \log \Bigl\{ \tint \exp \bigl[ - \be \bR(f)
\bigr]  \pis(df) \Bigr\}
    \]
and
    \[
    \I(\be) = - \log \Bigl\{ \tint \exp \bigl[ - \be \bR(f) 
\bigr]  \pi(df) \Bigr\}.
    \]

\subsection{The localized PAC-Bayes bound} \label{sec:gbound}

With the notation introduced in the previous section, 
we have the following risk bound for any randomized estimator.

\begin{thm} \label{th:gen}
Assume that $\pi$, $\pi^*$, $\cF$ and $\ell$ satisfy the 
integrability conditions \eqref{eq:intega} and \myeq{eq:integb}.
Let $\rho$ be a (posterior) probability distribution on $\cF$ 
admitting a density with respect to $\pi$ depending on $Z_1,\dots,Z_n$.
Let $\hf$ be a prediction function drawn from the distribution $\rho$.
Then for any $\ga \ge 0$, $\gas \ge 0$ and $\eps>0$, with probability (with respect to the distribution $P^{\otimes n} \rho$ generating
the observations $Z_1,\dots,Z_n$ and the randomized prediction function $\hf$) at least $1-\eps$:
	\begin{multline} \label{eq:gen}
	\int \big[ \La(\hf,f) + \gas \bR(f) \big] \pis_{-\gas \bR}(df) - \ga \bR
\bigl(\hf\,\bigr) \\
		\le \I^*(\gas) - \I(\ga)
	    - \log \biggl\{ \int \exp \bigl[ -\cEb(f) \bigr]  \pi(df) \bigg\} 
	    \\ + \log \biggl[  \frac{d \rho}{d \hpi}\bigl(\hf\, \bigr) \biggr]   
+  2\log \bigl(2\eps^{-1} \bigr).
	\end{multline}
\end{thm}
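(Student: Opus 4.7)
The strategy is a three-step reduction: a Jensen step against $\pis_{-\gas\bR}$, construction of an exponential martingale of mean one, and a two-step Markov argument (once over the data, once over the random draw of $\hf$) that together produce the $2\log(2\eps^{-1})$ slack.

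First, I would apply Jensen's inequality to the probability measure $\pis_{-\gas\bR}$ and the convex function $\exp$.  Writing $\pis_{-\gas\bR}(df) = e^{\I^*(\gas) - \gas\bR(f)} \pis(df)$, this gives
\begin{equation*}
\int \bigl[\La(\hf,f) + \gas\bR(f)\bigr] \pis_{-\gas\bR}(df) \le \log \int e^{\La(\hf,f) + \gas\bR(f)} \pis_{-\gas\bR}(df) = \I^*(\gas) + \log \int e^{\La(\hf,f)} \pis(df),
\end{equation*}
so it suffices to prove the cleaner inequality
\begin{equation*}
\log \int e^{\La(\hf,f)} \pis(df) - \ga\bR(\hf) \le -\I(\ga) - \log \int e^{-\cEb(f)} \pi(df) + \log \frac{d\rho}{d\hpi}(\hf) + 2\log(2\eps^{-1}).
\end{equation*}

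Second, I would exponentiate to reformulate this as the product bound $\Psi(\hf,Z) \le 4/\eps^2$, where
\begin{equation*}
\Psi(\hf,Z) = \Phi(\hf)\,\frac{d\hpi}{d\rho}(\hf), \qquad \Phi(\hf) = e^{\I(\ga)} \Bigl[\int e^{-\cEb} d\pi\Bigr]\, e^{-\ga\bR(\hf)} \int e^{\La(\hf,f)} \pis(df),
\end{equation*}
in which the factor $\Phi$ is data-independent.  The key bookkeeping identity is $\E_{\hf \sim \rho}[\Psi] = \int \Phi\, d\hpi$.  Applying Markov first on the data yields $\int \Phi\, d\hpi \le 2/\eps$ with probability at least $1-\eps/2$ provided $\E_Z \int \Phi\, d\hpi \le 1$; conditionally on this, Markov applied to $\hf \sim \rho$ yields $\Psi \le (2/\eps)\cdot (2/\eps) = 4/\eps^2$ with conditional probability at least $1-\eps/2$.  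A union bound then gives the claimed $1-\eps$ event.

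Third, the crux is therefore the exponential-moment inequality
\begin{equation*}
\E_Z \int \Phi(\hf)\, \hpi(d\hf) \le 1.
\end{equation*}
Its verification rests on two foundational identities that hold by independence of $Z_1,\ldots,Z_n$: first, the pointwise identity $\E_Z \exp[\La(f,f') - \hL(f,f')] = 1$ for every $(f,f')\in\cF^2$, which comes straight from the definition $\La(f,f') = -n\log \E \exp[-L(f,f')]$ and $\exp(-\hL) = \prod_i \exp(-L_i)$; and second, the analogous identity $\E_Z \exp[\hcE(f)] = \exp[\cEb(f)]$, coming from $\exp(\hL) = \prod_i \exp(L_i)$ and Fubini against $\pis(df')$.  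The normalizer $\int e^{-\cEb(f)} \pi(df)$ appearing inside $\Phi$ is exactly the deterministic counterpart of the data-dependent normalizer $Z_\pi = \int e^{-\hcE}\, d\pi$ of the posterior $\hpi$; the integrability hypothesis~I ensures that all the nested integrals against $\pi \otimes \pis$ and expectations over $Z$ are finite and may be exchanged.

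The main obstacle will be this last verification.  Although all the ingredients are in place, the quantity $\int \Phi \, d\hpi$ conceals the random normalizer $1/Z_\pi$, and matching it cleanly against the deterministic factor $\int e^{-\cEb}\, d\pi$ requires a Fubini rearrangement combined with a Cauchy--Schwarz-type control (such as the bound $1 \le \int e^{\hL(\hf,g)}\pis(dg)\cdot \int e^{-\hL(\hf,g)}\pis(dg)$ that converts $1/Z_\pi$ into an integral admitting an explicit $Z$-expectation via the first identity above).  Once this matching is carried out, the bound $\E_Z\int\Phi\, d\hpi \le 1$ falls out, and the two-Markov argument closes the proof.
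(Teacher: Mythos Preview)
Your first Jensen step and the two-Markov architecture are fine, but the ``crux'' you identify is a real gap and your suggested fix does not close it.

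After your bookkeeping, the claim $\E_Z\int\Phi\,d\hpi\le 1$ unpacks as
\[
\Bigl[\tint e^{-\cEb}\,d\pi\Bigr]\cdot
\E_Z\!\biggl[\frac{1}{Z_\pi}\int\Bigl(\tint e^{\La(\hf,f)}\pis(df)\Bigr)\,e^{-\hcE(\hf)}\,\pi_{-\ga\bR}(d\hf)\biggr]\le 1,
\]
with $Z_\pi=\int e^{-\hcE}\,d\pi$. Your Cauchy--Schwarz-type bound $1\le e^{\hcE(\hf)}\int e^{-\hL(\hf,g)}\pis(dg)$ only controls the numerator factor $e^{-\hcE(\hf)}$, replacing it by $\int e^{-\hL(\hf,g)}\pis(dg)$; the random normalizer $1/Z_\pi$ remains untouched and is still multiplied by other $Z$-dependent quantities. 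There is no Fubini that separates them, and after your substitution the integrand is bounded below (not above) by~$1$, so the argument does not close. Your identity $\E_Z e^{\hcE(f)}=e^{\cEb(f)}$ is correct but goes the wrong way: it controls $\E_Z[Z_\pi]$, whereas what is needed is control of $\E_Z[1/Z_\pi]$, and Jensen points in the unfavourable direction here.

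The paper resolves exactly this by \emph{not} trying to bound a single exponential moment. It writes the target as $V_1(\hf)+V_2$ with
\[
V_2=-\log Z_\pi+\log\!\int e^{-\cEb}\,d\pi,
\]
and $V_1$ containing $+\log Z_\pi$ so that, combined with $-\log(d\rho/d\hpi)$, the normalizer cancels and $V_1$ reduces (after two Jensen steps mixing in the empirical $\hL$ and $\hcE$) to something controlled by the martingale identity $\E_Z\exp[-\hL(\hf,f)]=\exp[-\La(\hf,f)]$. The piece $V_2$ is then handled \emph{separately} by a harmonic-mean interchange lemma: for positive $\W$,
\[
\E_{\mu_1}\!\Bigl[\bigl(\E_{\mu_2}[\W^{-1}]\bigr)^{-1}\Bigr]\le\Bigl(\E_{\mu_2}\bigl[(\E_{\mu_1}\W)^{-1}\bigr]\Bigr)^{-1},
\]
applied with $\mu_1=P^{\otimes n}$, $\mu_2=\pi$, which yields $\bigl[\int e^{-\cEb}\,d\pi\bigr]\cdot\E_Z[1/Z_\pi]\le 1$. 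This lemma (proved via the Donsker--Varadhan duality for Kullback--Leibler) is the missing ingredient in your sketch; without it, the random denominator $Z_\pi$ cannot be decoupled from the rest. A secondary point: your opening Jensen step replaces the theorem's left-hand side by something strictly larger, so you are in fact attempting to prove a stronger inequality than stated; the paper's route does not take this detour.
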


\begin{proof}
See Section \thmref{sec:pgeneric}.
\end{proof}

Some extra work will be needed to prove that Inequality \eqref{eq:gen} provides an upper bound on the excess risk 
$\bR(\hf)$ of the estimator $\hf$. As we will see in the next sections, despite the $-\ga \bR(\hf)$ term and provided that 
$\ga$ is sufficiently small, 
the left-hand side will be essentially lower bounded by $\lam n \bR(\hf)$, while, 
by choosing $\rho=\hpi$, the estimator does not appear in the right-hand side.

\subsection{Application under an exponential moment condition} \label{sec:gfirst}

The estimator proposed in Section \ref{sec:main} and
Theorem \ref{th:gen} seems rather unnatural (or at least complicated) at first sight. 
The goal of this section is twofold. First it shows that 
under exponential moment conditions (i.e., stronger assumptions than the ones in Theorem \ref{th:main} when the linear least square
setting is considered), one can have 
a much simpler estimator than the one consisting in drawing a function according to the distribution \eqref{eq:hpils} 
with $\hcE$ given by \eqref{eq:hcEls} and yet still obtain a $d/n$ convergence rate.
Secondly it illustrates Theorem~\ref{th:gen} in a different and simpler way than the one we will use 
to prove Theorem \ref{th:main}. 

In this section, we consider the following variance and complexity assumptions.

 
{\bf Condition V1.} \label{cond:v1}
There exist $\lam>0$ and $0<\eta<1$ such that for any function $f\in\cF$, 
we have
$\E \Bigl\{ \exp \bigl\{ \lam \, \ela \bigl[Y,f(X)\bigl] \bigr\} \Bigr\}  < +\infty$,\\[-3ex] 
\begin{multline*}
	\log \Bigl\{ \E \Bigl\{ \exp \Bigl\{ 
\lam \, \Bigl[ \ela \bigl[ Y,f(X) \bigr] -  \ela \bigl[ Y,f^*(X) \bigr] 
\Bigr] \Bigr\} \Bigr\} \Bigr\}   \\ \le \lam(1+\eta) [R(f) - R(f^*)],
\end{multline*}
\vspace{-5ex}
\begin{multline*}
\text{and } \log \Bigl\{ \E \Bigl\{ \exp \Bigl\{ 
- \lam \Bigl[ \ela \bigl[Y,f(X)\bigr] - 
\ela \bigl[ Y,f^*(X) \bigr] \Bigr] \Bigr\} \Bigr\} \Bigr\} \\ \le -\lam(1-\eta) [R(f) - R(f^*)].
\end{multline*}

{\bf Condition C.} \label{cond:c}
There exist a probability distribution $\pi$, and constants $D>0$ and $G>0$ such that for any $0<\alpha<\be$,
	\[
	\log \bigg( \frac{\int \exp  \{ 
-\alpha[R(f)-R(f^*)] \} \pi(df)}{\int \exp \{ -\be[R(f)-R(f^*)] \} \pi(df)} \bigg)
		\le D \log\bigg(\frac{G \be}{\alpha}\bigg).
	\]

\begin{thm} \label{th:v1c}
Assume that \textnormal{V1} and \textnormal{C} are satisfied.
Let $\hpig$ be the probability distribution on $\cF$ defined by
its density
	\[
	\frac{d \hpig}{d \pi} (f)  
=\frac{\exp \{ -\lam \sum_{i=1}^n \ela[Y_i,f(X_i)]\}}
		{\int \exp \{ -\lam \sum_{i=1}^n \ela[Y_i,f'(X_i)]\} \pi(df')},
	\]
where $\lam>0$ and the distribution $\pi$ are 
those appearing respectively in \textnormal{V1}
and \textnormal{C}.
Let $\hf \in \cF$ be a function drawn according to this Gibbs distribution.
Then for any $\eta'$ such that $0 < \eta' <1-\eta$ (where $\eta$ is 
the constant appearing in \textnormal{V1}) 
and any $\eps>0$, with probability at least $1-\eps$, we have
	\[
	R( \hf ) - R(f^*) \le \frac{C'_1 D + C'_2 \log(2\eps^{-1})}{n} 
	\]
with 
	\[
	C'_1= \frac{\ds \log \biggl(\frac{G(1+\eta)}{\eta' } \biggr) }{\lam(1-\eta-\eta' )} \quad \text{and} \quad
	C'_2=\frac{2}{\lam(1-\eta-\eta' )}.
	\]
\end{thm}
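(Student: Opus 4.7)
The plan is to specialize the generic bound of Theorem \ref{th:gen} to the choice of priors $\pis = \delta_{f^*}$ (a Dirac mass at $f^*$) together with the loss surrogate $\ell((X,Y),f,f') = \lam[\ela(Y,f(X))-\ela(Y,f'(X))]$, and to pick the posterior $\rho = \hpi$ inside \eqref{eq:gen}. The point is that a Dirac $\pis$ collapses the ``integral over $\pis_{-\gas\bR}$'' to a pointwise evaluation at $f^*$, and turns $\cEb(f)$ into $\Lb(f,f^*)$, a quantity directly controlled by V1.

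First I would verify Condition I with $\pis = \delta_{f^*}$. Inequality \eqref{eq:intega} reduces to $\E[\exp L(f,f^*)]^n < +\infty$, which follows from the finiteness assertion in V1. Inequality \eqref{eq:integb} becomes $\int\pi(df)/\E[\exp L(f,f^*)]^n$, and Jensen's inequality gives $\E[\exp L(f,f^*)] \ge \exp(\lam\bR(f)) \ge 1$, so the integral is at most $1$. The generic construction then yields $\pis_{-\gas\bR}=\delta_{f^*}$ (using $\bR(f^*)=0$), $\I^*(\gas)=0$, and $\cEb(f)=\Lb(f,f^*)$. Moreover $\hcE(f)=\hL(f,f^*)=\lam\sum_i[\ela(Y_i,f(X_i))-\ela(Y_i,f^*(X_i))]$, and since the $f$-independent factor $\exp(\lam\sum_i\ela(Y_i,f^*(X_i)))$ cancels in the normalization, the generic posterior $\hpi$ coincides with the Gibbs distribution $\hpig$ from the statement. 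Choosing $\rho = \hpig$ therefore kills the $\log(d\rho/d\hpi)$ term in \eqref{eq:gen}.

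With these substitutions, \eqref{eq:gen} collapses to $\La(\hf,f^*) - \ga\bR(\hf) \le -\I(\ga) - \log\int e^{-\Lb(f,f^*)}\pi(df) + 2\log(2\eps^{-1})$. Now I would apply V1 twice: its upper half yields $\Lb(f,f^*) \le n\lam(1+\eta)\bR(f)$, so that $-\log\int e^{-\Lb(f,f^*)}\pi(df) \le \I(n\lam(1+\eta))$; its lower half, applied pointwise at $f=\hf$, gives $\La(\hf,f^*) \ge n\lam(1-\eta)\bR(\hf)$. Combining these yields $[n\lam(1-\eta)-\ga]\bR(\hf) \le \I(n\lam(1+\eta)) - \I(\ga) + 2\log(2\eps^{-1})$.

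Finally I would set $\ga = n\lam\eta'$, which satisfies $0<\ga<n\lam(1+\eta)$ whenever $0<\eta'<1-\eta$, and invoke Condition C to obtain $\I(n\lam(1+\eta))-\I(\ga) \le D\log(G(1+\eta)/\eta')$. Dividing by $n\lam(1-\eta-\eta')>0$ gives exactly the stated inequality with $C_1' = \log(G(1+\eta)/\eta')/[\lam(1-\eta-\eta')]$ and $C_2' = 2/[\lam(1-\eta-\eta')]$. The only step that is not purely mechanical is recognizing that the Dirac choice $\pis=\delta_{f^*}$ is admissible (in particular that Condition I holds) and is precisely what is needed to convert the generic bound into a form on which V1 acts cleanly in both directions; once this is observed, the rest of the proof is a direct concatenation of the two halves of V1 with the complexity estimate C.
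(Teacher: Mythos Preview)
Your proposal is correct and follows essentially the same route as the paper: specialize Theorem~\ref{th:gen} with $\ell\bigl[(X,Y),f,f'\bigr]=\lam\bigl\{\ela[Y,f(X)]-\ela[Y,f'(X)]\bigr\}$, take $\pis=\delta_{f^*}$ (the paper also fixes $\gas=0$, which is equivalent to your observation that $\pis_{-\gas\bR}=\delta_{f^*}$ and $\I^*(\gas)=0$ regardless), choose $\rho=\hpi=\hpig$, apply the two halves of V1 to bound $\La$ below and $\cEb$ above, and finish with Condition~C and the reparameterization $\ga=\lam n\eta'$. Your explicit verification of Condition~I via Jensen is slightly more detailed than the paper's one-line remark, but the argument is otherwise the same.
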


\begin{proof}
We consider $\ell\bigl[ (X,Y),f,f' \bigr] = \lam \bigl\{ 
\ela\bigl[ Y,f(X) \bigr] -\ela \bigl[ Y,f'(X) \bigr] \bigr\}$, 
where $\lam$ is the constant appearing in the variance assumption.
Let us take $\gas=0$ and let $\pis$ be the Dirac distribution at $f^*$: $\pis(\{f^*\})=1$.
Then Condition V1 implies Condition~I (page \pageref{cond:i}) and 
we can apply Theorem \ref{th:gen}. We have
	\begin{align*}
	L(f,f') & = \lam  \Bigl\{ \ela \bigl[ Y,f(X) \bigr] 
-\ela \bigl[ Y,f'(X) \bigr] \Bigr\},\\
	\hcE(f) & = \lam \sum_{i=1}^n \ela \bigl[ Y_i,f(X_i) \bigr]  
- \lam \sum_{i=1}^n \ela \bigl[ Y_i,f^*(X_i) \bigr], \\
	\hpi & = \hpig,\\
	\La(f) & = - n \log \Bigl\{ \E \Bigr[  \exp \bigl[ -L(f,f^*) 
\bigr] \Bigr] \Bigr\}, \\
	\cEb(f) & = n \log \Bigl\{ \E \Bigl[ \exp \bigl[ L(f,f^*) 
\bigr] \Bigr] \Bigr\}
	\end{align*}
and Assumption V1 leads to: 
\begin{align*}
\log \Bigl\{ \E  \Bigl[ \exp \bigl[ L(f,f^*) \bigr] 
\Bigr] \Bigr\} & \le \lam(1+\eta) [R(f) - R(f^*)]\\
\text{and } 
\log \Bigl\{ \E \Bigl[ \exp \bigl[ -L(f,f^*) \bigr] 
\Bigr] \Bigr\} & \le -\lam(1-\eta) [R(f) - R(f^*)].
\end{align*}
Thus choosing $\rho = \hpi$, \eqref{eq:gen} gives 
	\[
	[\lam n (1-\eta)-\ga] \bR(\hf) 
	    \le - \I(\ga) + \I\bigl[ \lam n (1+\eta) \bigr] + 2\log(2\eps^{-1}).
	\]
Accordingly by the complexity assumption, for $\ga \le \lam n (1+\eta)$, we get
	\[
	[\lam n (1-\eta)-\ga] \bR(\hf) 
		\le D \log\bigg( \frac{G\lam n(1+\eta)}{\ga} \bigg)
		+ 2\log(2\eps^{-1}),
	\]
which implies the announced result by reparameterization (taking $\gamma=\lam n \eta'$).
\end{proof}

Let us conclude this section by mentioning settings in which assumptions V1 and C are satisfied.

\begin{lemma} \label{le:complexity}
Let $\cC$ be a bounded convex set of $\R^d$, and $\vp_1,\dots,\vp_d$ be $d$ square integrable prediction functions. 
Assume that
	$$
	\cF= \big\{ f_\theta=\sum_{j=1}^d \th_j \vp_j ; (\th_1,\dots,\th_d) \in \cC \big\},
	$$
$\pi$ is the uniform distribution on $\cF$ (i.e., the one coming from the uniform distribution on $\cC$),
and that there exist $0<b_1\le b_2$ such that for any $y\in\R$, the function $\ela_y: y' \mapsto \ela(y,y')$
admits almost everywhere a second derivative such that, $(y,y') 
\mapsto \ela_y''(y')$ is measurable, for any $y,y'\in\R$, $b_1 \le \ela''_y(y') \le b_2$, and 
$$
\ela(y,y') = \ela(y,y) + (y'-y) \ela_y'(y) + \int_{y}^{y'} (y' - y'') 
\ela_y''(y'') d y''.
$$
Then Condition \textnormal{C} holds for the above uniform $\pi$, $G=\sqrt{b_2/b_1}$ and $D=d$.

Besides when $f^*=\flin$ (i.e., $\min_{\cF} R = \min_{\th\in\R^d} R(f_\th)$), Condition \textnormal{C} holds for the above uniform $\pi$, $G=b_2/b_1$ and $D=d/2$.
\end{lemma}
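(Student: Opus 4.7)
I would first recast Condition \textnormal{C} as an inequality about integrals on $\cC$. Setting $\phi(\theta) \eqdef R(f_\theta) - R(f^*)$, picking $\theta^* \in \cC$ with $f_{\theta^*} = f^*$, and using that $\pi$ is the push-forward of the normalized Lebesgue measure on $\cC$, the ratio on the left-hand side of Condition \textnormal{C} becomes $\int_\cC e^{-\alpha\phi}d\theta / \int_\cC e^{-\beta\phi}d\theta$. The assumption $b_1 \le \ela_y''(y') \le b_2$ combined with the Taylor expansion of $\ela$ given in the lemma justifies differentiating twice under the expectation, which yields $\nabla^2 \phi(\theta) = \E\bigl[\ela_Y''(f_\theta(X))\vp(X)\vp(X)^T\bigr]$ and thus $b_1 Q \preceq \nabla^2 \phi(\theta) \preceq b_2 Q$ for every $\theta$. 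In particular $\phi$ is convex on $\R^d$, nonnegative, and vanishes at $\theta^*$.

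The central move is a change of variable contracting $\cC$ toward $\theta^*$. For $t \in (0,1]$, set $T_t(\eta) = \theta^* + t(\eta-\theta^*)$; convexity of $\cC$ together with $\theta^* \in \cC$ gives $T_t(\cC) \subseteq \cC$, equivalently $T_t^{-1}(\cC) \supseteq \cC$. Performing the substitution $\theta = T_t(\eta)$ in the denominator (Jacobian $t^d$) and then shrinking the domain of integration in $\eta$ from $T_t^{-1}(\cC)$ down to $\cC$ yields
\[
\int_\cC e^{-\beta\phi(\theta)}\,d\theta \,\ge\, t^d \int_\cC e^{-\beta\phi(T_t(\eta))}\,d\eta .
\]
It then suffices to bound $\phi \circ T_t$ above by a multiple of $\phi$ and to choose $t$ so that the resulting exponent matches $-\alpha\phi$.

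For the general case I would rely only on convexity and $\phi(\theta^*) = 0$ to get $\phi(T_t(\eta)) \le t\phi(\eta)$; setting $t = \alpha/\beta \in (0,1]$ produces $\int_\cC e^{-\beta\phi} \ge (\alpha/\beta)^d \int_\cC e^{-\alpha\phi}$, so the ratio is at most $(\beta/\alpha)^d \le (\sqrt{b_2/b_1}\,\beta/\alpha)^d$ (using $b_1 \le b_2$), giving Condition \textnormal{C} with $D = d$ and $G = \sqrt{b_2/b_1}$. When $f^* = \flin$, one additionally has $\nabla\phi(\theta^*) = 0$, so the integral form of Taylor's theorem combined with the Hessian bounds sharpens the picture into the two-sided estimate $\tfrac{b_1}{2}(\eta-\theta^*)^T Q (\eta-\theta^*) \le \phi(\eta) \le \tfrac{b_2}{2}(\eta-\theta^*)^T Q (\eta-\theta^*)$, which gives $\phi(T_t(\eta)) \le t^2(b_2/b_1)\phi(\eta)$. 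Choosing $t = \sqrt{\alpha b_1/(\beta b_2)} \in (0,1]$ makes the new exponent exactly $\alpha$, and the ratio is at most $(b_2\beta/(b_1\alpha))^{d/2}$, i.e.\ Condition \textnormal{C} with $D = d/2$ and $G = b_2/b_1$.

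The one subtle point is the location of $\theta^*$: since $\theta^*$ only minimizes $\phi$ on $\cC$, it may lie on the boundary of $\cC$, in which case $\nabla\phi(\theta^*)$ need not vanish and one cannot pass to the quadratic-scaling regime. This is precisely what forces the weaker exponent $d$ instead of $d/2$ in the general case. The contraction argument itself is boundary-proof, as $T_t(\cC) \subseteq \cC$ demands nothing more than the convexity of $\cC$ and the inclusion $\theta^* \in \cC$, so no quantitative geometric information about $\cC$ enters the final bound.
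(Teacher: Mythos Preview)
Your proof is correct and rests on the same core idea as the paper's: a scaling change of variable toward $\theta^*$ that exploits how $\phi$ behaves under contraction. The paper carries this out in polar coordinates, writing the integrals as $\int_{\Omega}\int_0^{u_\sigmb}\exp\{-\alpha[u c_\sigmb+\tfrac12 b_1 a_\sigmb u^2]\}u^{d-1}\,du\,d\sigmb$ and rescaling $u$ along each ray by a factor $\zeta=\sqrt{b_2\beta/(b_1\alpha)}\vee(\beta/\alpha)$, after first replacing $\phi$ by its two-sided Taylor sandwich $u c_\sigmb+\tfrac{b_1}{2}a_\sigmb u^2\le \phi\le u c_\sigmb+\tfrac{b_2}{2}a_\sigmb u^2$ and using $c_\sigmb\ge 0$ from the first-order optimality of $\theta^*$ on $\cC$.

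Your global homothety $T_t$ is a cleaner packaging of the same move: it avoids polar coordinates entirely, and in the general case it dispenses with the Hessian bounds and the sign of the directional derivative, using only convexity of $\phi$ and $\phi(\theta^*)=0$ to get $\phi(T_t(\eta))\le t\,\phi(\eta)$. This actually yields the slightly sharper constant $G=1$ (which you then relax to $\sqrt{b_2/b_1}$ to match the statement). In the special case $f^*=\flin$, both proofs use the quadratic sandwich and arrive at the identical choice of scaling factor and the same $D=d/2$, $G=b_2/b_1$.
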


\begin{proof}
See Section \thmref{sec:complexity}.
\end{proof}

\begin{rmk} \label{rem:cls}
In particular, for the least squares loss $\ela(y,y')=(y-y')^2$, we have $b_1=b_2=2$ so 
that condition C holds with $\pi$ the uniform distribution on $\cF$, $D=d$ and $G=1$, and with $D=d/2$ and $G=1$ when $f^*=\flin$.
\end{rmk}


\begin{lemma} \label{le:v1b}
Assume that the loss function $\ela$ satisfies the conditions stated 
in Lemma \ref{le:complexity}. Assume moreover that there exist 
$A>0$ and $M>0$ such that for any  $x\in\X$, 
$$
\E \Bigl\{ 
 \exp \Bigl[ A^{-1} \bigl\lvert \ela'_Y \bigl[f^*(X) \bigr] 
\bigr\rvert \Bigr]\, \Big| \,X=x \Bigr\} \le M.
$$
Assume that $\cF$ is convex and has a diameter upper bounded by $H$ for the $L^\infty$-norm:
	\[
	\sup_{f_1,f_2\in \cF,x\in\X} |f_1(x)-f_2(x)| \le H.
	\]
In this case Condition \textnormal{V1} holds for any $(\lam,\eta)$ such that
    \[
    \eta \ge \frac{\lam A^2}{2 b_1} \exp \Bigl[ M^2 \exp \bigl( H b_2/A 
\bigr)  \Bigr] .
    \]
and $0< \lam \le (2AH)^{-1}$ is small enough to ensure $\eta<1$.
\end{lemma}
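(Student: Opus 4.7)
My plan is to Taylor-expand $\ela_Y$ around $f^*(X)$, exploit first-order optimality of $f^*$ on the convex set $\cF$ to handle the linear contribution, and bound the moment generating function of the loss increment via a power series controlled by the exponential moment assumption on $\ela'_Y\bigl(f^*(X)\bigr)$.

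Write $a(X) = f(X) - f^*(X)$, so that $|a(X)| \le H$ almost surely, and $W = \ela'_Y\bigl(f^*(X)\bigr)$, so that $\E\bigl[\exp(|W|/A) \mid X\bigr] \le M$. By the integral form of Taylor's theorem assumed in Lemma~\ref{le:complexity},
\[
D \eqdef \ela\bigl(Y, f(X)\bigr) - \ela\bigl(Y, f^*(X)\bigr) = aW + Q,
\qquad Q = \int_{f^*(X)}^{f^*(X)+a}\bigl(f^*(X)+a-u\bigr)\ela''_Y(u)\,du,
\]
with $\tfrac{b_1}{2}a^2 \le Q \le \tfrac{b_2}{2}a^2$ because $b_1 \le \ela''_Y \le b_2$. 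Convexity of $\cF$ together with optimality of $f^*$ yields the first-order condition $\E[aW] = \E\bigl[(f-f^*)(X)\,\ela'_Y(f^*(X))\bigr] \ge 0$, hence
\[\bR(f) = \E[D] = \E[aW] + \E[Q] \;\ge\; \tfrac{b_1}{2}\,\E[a^2].\]

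I would then derive Condition~V1 by expanding
\[ \E\bigl[e^{\lam D}\bigr] \;=\; 1 + \lam \E[D] + \sum_{k \ge 2} \frac{\lam^k \E[D^k]}{k!} \]
and bounding the tail sum. The exponential moment assumption gives $\E[|W|^k \mid X] \le M\, k!\, A^k$; combined with the pointwise inequality $|D| \le |a|\bigl(|W| + \tfrac{b_2}{2}|a|\bigr)$ and the transfer
\[ \E\!\bigl[\exp\bigl((|W|+\tfrac{b_2}{2}|a|)/A\bigr) \mid X\bigr] \;\le\; M \exp\bigl(b_2 H/(2A)\bigr), \]
this yields $\E\bigl[|D|^k \mid X\bigr] \le M\, |a|^k\, k!\, A^k\, \exp(b_2 H/(2A))$. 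Using $|a| \le H$, the constraint $\lam A H \le 1/2$ to sum the resulting geometric series, and the bound $\E[a^2] \le 2 \E[D]/b_1$ established above, I obtain
\[ \sum_{k \ge 2} \frac{\lam^k \E[|D|^k]}{k!} \;\le\; \frac{C\, M \lam^2 A^2}{b_1}\exp\bigl(b_2 H/(2A)\bigr)\,\E[D] \]
for a numerical constant $C$. The same bound controls the alternating series for $\E[e^{-\lam D}]$ (since $\lvert(-\lam)^k \E[D^k]\rvert \le \lam^k \E[|D|^k]$), and the inequality $\log(1+x) \le x$ delivers both inequalities of Condition~V1 with an admissible $\eta$.

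The main difficulty is bookkeeping the constants to exactly match the stated bound $\eta \ge \tfrac{\lam A^2}{2b_1}\exp\bigl[M^2 \exp(Hb_2/A)\bigr]$. The Taylor-series route above naturally yields a tighter constant, so reaching the advertised form probably requires a deliberately more conservative intermediate step --- for instance, a Cauchy--Schwarz split $\E[e^{\lam aW}e^{\lam Q}] \le \E[e^{2\lam aW}]^{1/2}\E[e^{2\lam Q}]^{1/2}$ combined with a Jensen bound of the type $\E[e^{tW} \mid X] \le M^{|tA|}$ for $|tA| \le 1$, which introduces powers of $M$ into the exponent and inflates the quadratic factor into $\exp(Hb_2/A)$.
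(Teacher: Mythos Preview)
Your power-series route is sound and does establish Condition~V1 for \emph{some} admissible~$\eta$: the moment bound $\E\bigl[|D|^k\mid X\bigr]\le k!\,|a|^k A^k M e^{b_2H/(2A)}$, the geometric summation under $\lambda AH\le 1/2$, and the transfer $\E[a^2]\le 2\bR(f)/b_1$ are all correct, and the second inequality of V1 follows by the same tail bound applied to the alternating series together with $\log(1-x)\le -x$.

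The paper takes a genuinely different route. Writing $L=\lambda(F-F^*)\Omega$ with $\Omega=\ela'_Y(F^*)+(F-F^*)\int_0^1(1-t)\ela''_Y\bigl(F^*+t(F-F^*)\bigr)\,dt$, it proceeds in two conditioning stages. First it bounds the \emph{conditional} log-MGF $\log\E\bigl[e^{L-\E(L\mid X)}\mid X\bigr]$ via the sub-exponential concentration Lemma~\ref{le:stdb}, using $\E\bigl[e^{|\Omega|/A}\mid X\bigr]\le M e^{Hb_2/(2A)}$; this yields a quadratic correction $\lambda^2(F-F^*)^2 a(\lambda)$ with $a(\lambda)\propto M^2A^2e^{Hb_2/A}$. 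It then absorbs this correction into $\tilde L=\E(L\mid X)+a(\lambda)\lambda^2(F-F^*)^2$, a \emph{bounded} function of $X$, and controls $\log\E\bigl[e^{\tilde L-\E\tilde L}\bigr]$ via the bounded-variable Lemma~\ref{le:stda}. The double exponential $\exp[M^2\exp(Hb_2/A)]$ arises when the $M^2e^{Hb_2/A}$ factor from the first stage is fed into the $g(b')$ factor of the second; the final inequality $\tilde A^2 g(b')+a(\lambda)\le \tfrac{A^2}{4}\exp[M^2\exp(Hb_2/A)]$ is simply asserted after ``computations show that''.

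Two corrections to your commentary. First, your assertion that the direct route ``naturally yields a tighter constant'' is not uniformly true: your $\eta_0\asymp M\lambda A^2 b_1^{-1}e^{b_2H/(2A)}$ can exceed the paper's threshold $\tfrac{\lambda A^2}{2b_1}\exp[M^2\exp(Hb_2/A)]$ when $M$ is close to~$1$ and $Hb_2/A$ is small (e.g.\ at $M=1$, $Hb_2/A=0$ one compares $4$ with $e/2$), so your argument does not recover the lemma \emph{verbatim} in that regime. Second, your Cauchy--Schwarz/Jensen speculation is not how the paper arrives at its constant; the two-stage conditioning via Lemmas~\ref{le:stda} and~\ref{le:stdb} is the actual mechanism.
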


\begin{proof}
See Section \thmref{sec:pv1b}.
\end{proof}

\subsection{Application without exponential moment condition} \label{sec:gsecond}

When we do not have finite exponential moments as assumed by 
Condition V1 (page \pageref{cond:v1}), e.g., when
	$\E \bigl\{ \exp \bigl\{ \lam \bigl\{ \ela[Y,f(X)]-\ela[Y,f^*(X)] 
\bigr\} \bigr\} \bigr\}  = +\infty$
for any $\lam>0$ and some function $f$ in $\cF$,
we cannot apply Theorem \ref{th:gen} with 
	$\ell\bigl[ (X,Y),f,f' \bigr]  = \lam  \bigl\{ \ela\bigl[Y,f(X)\bigr]-
\ela\bigl[ Y,f'(X) \bigr] \bigr\}$
(because of the $\cEb$ term). However,
we can apply it to the soft truncated excess loss
	\[
	\ell\bigl[ (X,Y),f,f' \bigr] =T
\Bigl( \lam \bigl\{ \ela\bigl[ Y,f(X) \bigr]  - \ela 
\bigl[ Y,f'(X) \bigr] \bigr\} \Bigr),
	\]
with $T(x)=-\log(1-x+x^2/2).$ 
This section provides a result similar to Theorem \ref{th:v1c} in which condition V1 is replaced by the following condition.

{\bf Condition V2.} \label{cond:v2}
For any function $f$, the random variable $\ela \bigl[ Y,f(X) \bigr] 
 - \ela \bigl[ Y,f^*(X) \bigr]$ is square integrable and 
there exists $V>0$ such that for any function $f$, 
	\[
	\E \Bigl\{ \Bigl[ \ela \bigl[ Y,f(X) \bigr]  
- \ela \bigl[ Y,f^*(X) \bigr] \Bigr]^2 \Bigr\} \le V [R(f) - R(f^*)].
	\]

\begin{thm} \label{th:v2c}
Assume that Conditions \textnormal{V2} above and \textnormal{C} 
(page \pageref{cond:c}) are satisfied.
Let $0 < \lam < V^{-1}$ and 
	\beglab{eq:elltrunc}
	\ell \bigl[ (X,Y),f,f' \bigr] =T \Bigl( \lam \big\{ \ela \bigl[Y,f(X)\bigr] 
- \ela \bigl[Y,
f'(X) \bigr] \bigr\}\Bigr),
	\endlab
with 
    \beglab{eq:deft}
    T(x)=-\log(1-x+x^2/2).
    \endlab
Let $\hf \in \cF$ be a function drawn according to the distribution $\hpi$ defined in \myeq{eq:hpi}
with $\hcE$ defined in \myeq{eq:alih} and $\pis=\pi$ the distribution appearing in Condition \textnormal{C}.
Then for any $0 < \eta' <1 - \lam V$ and $\eps>0$, with probability at least $1-\eps$, we have
	\[
	R( \hf ) - R(f^*) \le V \frac{C'_1 D + C'_2 \log(2\eps^{-1})}{n} 
	\]
with 
	\[
	C'_1= \frac{\ds \log \biggl(\frac{G(1+\eta)^2}{\eta'(1-\eta)} \biggr) }{\eta (1-\eta-\eta')}, \quad 
	C'_2=\frac{2}{\eta(1-\eta- \eta')} \quad \text{and} \quad \eta= \lam V.
	\]
In particular, for $\lam=0.32 V^{-1}$ and $\eta'=0.18$, we get
	\[
	R( \hf ) - R(f^*) \le V \frac{16.6 D + 12.5 \log(2\sqrt G\eps^{-1})}{n}.
	\]
\end{thm}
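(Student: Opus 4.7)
The plan is to apply Theorem \thmref{th:gen} with $\pis = \pi$ (the distribution from Condition \textnormal{C}) and $\rho = \hpi$, so that the Radon--Nikodym term $\log[d\rho/d\hpi]$ in \eqref{eq:gen} vanishes, and then to convert \eqref{eq:gen} into the advertised bound by lower-bounding its left-hand side by a positive multiple of $\bR(\hf)$ and upper-bounding its right-hand side via Condition \textnormal{C}. The analytic backbone is the pair of polynomial estimates specific to $T(x) = -\log(1-x+x^2/2)$: exactly, $\exp[-T(x)] = 1-x+x^2/2$, while $\exp[T(x)] \le 1+x+x^2/2$ because $(1-x+x^2/2)(1+x+x^2/2) = 1 + x^4/4 \ge 1$. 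These degree-two estimates combine cleanly with the quadratic variance hypothesis V2.

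For the left-hand side, set $W = \ela[Y,\hf(X)]-\ela[Y,f(X)] = W_{\hf} - W_f$ with $W_g = \ela[Y,g(X)] - \ela[Y,f^*(X)]$. Condition V2 gives $\E[W_g^2] \le V\bR(g)$, whence $\E[W^2] \le 2V(\bR(\hf)+\bR(f))$ by $(a-b)^2 \le 2(a^2+b^2)$. Plugging into $\E\exp[-L(\hf,f)] = 1-\lam\,\E W + (\lam^2/2)\E[W^2]$ and applying $-\log(1-u) \ge u$ yields
\[
\La(\hf,f) \ge n\lam\bigl[(1-\eta)\bR(\hf) - (1+\eta)\bR(f)\bigr].
\]
Integrating against $\pis_{-\gas\bR}$ and setting $\gas = n\lam(1+\eta)$ cancels the $\int\bR(f)\,\pis_{-\gas\bR}(df)$ cross term against $\gas\int\bR(f)\,\pis_{-\gas\bR}(df)$; choosing $\ga = n\lam\eta'$, the left-hand side of \eqref{eq:gen} is at least $n\lam(1-\eta-\eta')\bR(\hf)$. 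The dual inequality $\exp[T(x)] \le 1+x+x^2/2$ with the same decomposition via $f^*$ gives $\Lb(f,f') \le n\lam(1+\eta)\bR(f) - n\lam(1-\eta)\bR(f')$; this is additively separable, so the integral in the definition \eqref{eq:alib} of $\cEb$ factorises, producing $\cEb(f) \le n\lam(1+\eta)\bR(f) - \I^*(n\lam(1-\eta))$, and then
\[
-\log \int \exp[-\cEb(f)]\,\pi(df) \le \I(n\lam(1+\eta)) - \I^*(n\lam(1-\eta)).
\]

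Assembling with $\pis = \pi$ (so $\I^* = \I$) reduces the right-hand side of \eqref{eq:gen} to $2\I(n\lam(1+\eta)) - \I(n\lam\eta') - \I(n\lam(1-\eta)) + 2\log(2\eps^{-1})$. Two applications of Condition \textnormal{C}, to the pairs $(\alpha,\be) = (n\lam\eta', n\lam(1+\eta))$ and $(n\lam(1-\eta), n\lam(1+\eta))$, bound this by $D\log\bigl[G(1+\eta)/\eta'\bigr] + D\log\bigl[G(1+\eta)/(1-\eta)\bigr] + 2\log(2\eps^{-1})$; dividing by $n\lam(1-\eta-\eta') = n\eta(1-\eta-\eta')/V$ produces the stated form of the risk bound. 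The delicate step I expect is the bookkeeping of the two $\log G$ contributions so that only a single $\log G$ surfaces inside the $C'_1$ logarithm (the ``in particular'' form with $\log(2\sqrt{G}\,\eps^{-1})$ in the deviation term suggests that the remaining half migrates into the confidence-dependent component). A side verification is that $-\log(1-u) \ge u$ remains applicable wherever used, guaranteed by $\lam < V^{-1}$ (hence $\eta < 1$) and $0 < \eta' < 1-\eta$.
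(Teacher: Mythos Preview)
Your proposal is correct and follows essentially the same route as the paper's own proof: apply Theorem~\ref{th:gen} with $\pis=\pi$ and $\rho=\hpi$, use the exact identity $\exp[-T(x)]=1-x+x^2/2$ together with $\log u\le u-1$ to lower-bound $\La$, use $\exp[T(x)]\le 1+x+x^2/2$ to upper-bound $\Lb$ and hence $\cEb$, insert Condition~V2 via the decomposition through $f^*$, and finally choose $\gas=\lam n(1+\eta)$, $\ga=\lam n\eta'$ before invoking Condition~C.

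Your worry about the bookkeeping of the $\log G$ terms is well placed: two applications of Condition~C genuinely produce $D\log\bigl(G^2(1+\eta)^2/[\eta'(1-\eta)]\bigr)$, whereas the theorem as stated carries only a single $G$ inside $C'_1$ and a $\sqrt G$ in the ``in particular'' clause. The paper's own derivation reaches the same point and writes the bound with a single $G$; this appears to be a minor slip, immaterial in the intended least-squares application where $G=1$ (Remark~\ref{rem:cls}). So your argument is not missing anything---the discrepancy lies in the statement, not in the method.
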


\begin{proof}
We apply Theorem \ref{th:gen} for 
$\ell$ given by \eqref{eq:elltrunc} and $\pis=\pi$.
Let us define, for any $f, f' \in \cF$,  $W(f,f') = \lam \Bigl\{ 
\ela \bigl[ Y,f(X) \bigr] -\ela \bigl[ Y,f'(X)
\bigr]  \Bigr\}$.
Since $\log u \le u-1$ for any $u>0$, we have
	\[
	\La = - n \log \E \bigl( 1 - W + W^2/2  \bigr) \ge n \bigl( 
\E (W) - \E( W^2)/2 \bigr).
	\]
Moreover, from Assumption V2, 
	\beglab{eq:usev}
	\frac{\E \bigl[W(f,f')^2 \bigr]}2\le \E \bigl[ W(f,f^*)^2 \bigr] +
\E \bigl[ W(f',f^*)^2 \bigr] 
		\le \lam^2 V \bR(f)+ \lam^2 V \bR(f'),
	\endlab
hence, by introducing $\eta= \lam V$,
	\begin{align} \label{eq:bla}
	\La(f,f') & \ge \lam n \Bigl[ \bR(f) - \bR(f') - \lam V \bR(f) - \lam V \bR(f') \Bigr] \notag\\
		& = \lam n \Bigl[ (1-\eta) \bR(f) - (1+\eta) \bR(f') \Bigr].
	\end{align}
Noting that 
	\[
	\exp \bigl[ T(u) \bigr] = \frac{1}{1-u+u^2/2}
	\\ = \frac{1 + u + \frac{u^2}{2}}{\bigl( 1 + \tfrac{u^2}{2}
\bigr)^2 - u^2} = \frac{1 + u + \frac{u^2}{2}}{1 + \frac{u^4}{4}}
\leq 1 + u + \frac{u^2}{2}, 
	\]
we see that 
	\[
	\Lb = n \log \Bigl\{ \E \Bigr[  \exp \bigl[ T(W) \bigr] \Bigr] \Bigr\} \le n 
\Bigl[ \E\bigl(W\bigr) + \E \bigl(W^2\bigr)/2  \Bigr].
	\]
Using \eqref{eq:usev} and still $\eta= \lam V$, we get
	\bmul
	\Lb(f,f') \le \lam n \Bigl[ \bR(f) - \bR(f') + \eta \bR(f) + \eta \bR(f') \Bigr]\\
		= \lam n (1+\eta) \bR(f) - \lam n (1-\eta) \bR(f'),
	\end{multline*}
and	
	\beglab{eq:bceb}
	\cEb(f) \le \lam n (1+\eta) \bR(f) - \I \bigl(\lam n(1-\eta) \bigr).
	\endlab
Plugging \eqref{eq:bla} and \eqref{eq:bceb} in \eqref{eq:gen} for $\rho=\hpi$, we obtain
	\bmul
	\bigl[ \lam n (1-\eta) - \ga  \bigr] \bR(\hf) + \bigl[\gas-\lam n (1+\eta) \bigr] \int \bR(f) \pi_{-\gas \bR}(df) \\
	    \le \I(\gas) - \I(\ga) + \I \bigl(\lam n (1+\eta) \bigr) - \I 
\bigl(\lam n (1-\eta) \bigr)+2\log \bigl(2\eps^{-1} \bigr).
	\end{multline*}
By the complexity assumption, choosing $\gas = \lam n (1+\eta)$ and $\ga < \lam n (1-\eta)$, we get
	\[
	 \bigl[ \lam n (1-\eta) - \ga  \bigr] \bR(\hf) 
	    \le D \log\bigg( G \frac{\lam n (1+\eta)^2}{\ga (1-\eta)} \bigg)+2\log 
\bigl(2\eps^{-1} \bigr),
	\]
hence the desired result by considering $\ga= \lam n \eta'$ with $\eta'< 1-\eta$.
\end{proof}

\begin{rmk} \label{rem:complicate}
The estimator seems abnormally complicated at first sight.
This remark aims at explaining why we were not able to consider a simpler estimator.

In Section \ref{sec:gfirst}, in which we consider the exponential moment
condition V1, we took $\ell \bigl[ (X,Y),f,f' \bigr] =
\lam\bigl\{ \ela \bigl[ Y,f(X) \bigr]  - \ela \bigl[ Y,f'(X) \bigr] 
\bigr\}$
and $\pis$ as the Dirac distribution at $f^*$. For these choices, one can easily check
that $\hpi$ does not depend on~$f^*$.

In the absence of an exponential moment condition, we cannot
consider the function $\ell \bigl[ (X,Y),f,f' \bigr] =\lam 
\bigl\{ \ela \bigl[ Y,f(X) \bigr]  - \ela \bigl[ Y,f'(X) \bigr] \bigr\} $
but have instead to use a truncated version. The truncation function $T$ 
of Theorem \ref{th:v2c}
can be replaced by the simpler function $u \mapsto (u\vee -M)\wedge M$
for some appropriate constant $M>0$ but this leads to a bound with worse constants,
without really simplifying the algorithm.
The precise choice $T(x)=-\log(1-x+x^2/2)$ comes from the remarkable property:
there exist second order polynomials $\Pa$ and $\Pb$ such that
	$\frac{1}{\Pa(u)} \le  \exp \bigl[ T(u) \bigr]  \le \Pb(u)$
and
	$\Pa(u)\Pb(u) \le 1+\text{O}(u^4)$ for $u\ra 0$, which 
are reasonable properties to ask in order to ensure 
that \eqref{eq:interv}, and consequently \eqref{eq:gen}, are tight.

Besides, if we take $\ell$ as in \eqref{eq:elltrunc} with $T$ a truncation function 
and $\pis$ as the Dirac distribution at $f^*$,
then $\hpi$ would depend on $f^*$, and is consequently not observable.
This is the reason why we do not consider $\pis$ as the Dirac distribution at $f^*$,
but $\pis=\pi$. This leads to the estimator considered in Theorems \ref{th:v2c} and \ref{th:main}.
\end{rmk}

\begin{rmk} 
Theorem \ref{th:v2c} still holds for the same randomized estimator in which 
\myeq{eq:deft} is replaced with
	\[
	T(x) = \log(1+x+x^2/2).
	\]
\end{rmk}

Condition V2 holds under weak assumptions
as illustrated by the following lemma.

\begin{lemma} \label{le:v2}
Consider the least squares setting: $\ela(y,y') = (y-y')^2$.
Assume that $\cF$ is convex and has a diameter upper bounded by $H$ for the $L^\infty$-norm:
	\[
	\sup_{f_1,f_2\in \cF,x\in\X} |f_1(x)-f_2(x)| \le H
	\]
and that for some $\sigma>0$, we have
	\beglab{eq:moment}
	\sup_{x\in\X} \E\big\{[Y-f^*(X)]^2 \big| X=x\big\} \le \sigma^2 < +\infty.
	\endlab
Then Condition \textnormal{V2} holds for $V=(2\sigma+H)^2$.
\end{lemma}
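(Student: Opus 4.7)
The plan is to unfold the squared loss difference algebraically, then bound the conditional second moment pointwise in $X$ using the diameter and noise assumptions, and finally relate the resulting $L^2$ distance to the excess risk via a first-order optimality inequality coming from the convexity of $\cF$.

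First I would introduce the shorthand $h = f^* - f$ and $\xi = Y - f^*(X)$ and use the identity $(Y-f(X))^2 - (Y-f^*(X))^2 = h(X)\bigl[2\xi + h(X)\bigr]$. Squaring and taking expectations,
\[
\E\Bigl\{\bigl[\ela(Y,f(X)) - \ela(Y,f^*(X))\bigr]^2\Bigr\}
= \E\Bigl\{h(X)^2 \,\E\bigl[(2\xi + h(X))^2 \mid X\bigr]\Bigr\}.
\]
To control the inner conditional expectation, I would apply Minkowski's inequality conditionally on $X$:
\[
\sqrt{\E\bigl[(2\xi + h(X))^2 \mid X\bigr]} \le 2\sqrt{\E[\xi^2 \mid X]} + |h(X)| \le 2\sigma + H,
\]
where the last bound uses assumption \eqref{eq:moment} and the fact that $f,f^*\in\cF$ together with the $L^\infty$ diameter bound. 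Thus the left-hand side is at most $(2\sigma+H)^2\, \E[h(X)^2]$.

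The second step is to show $\E[h(X)^2] \le R(f)-R(f^*)$. I would use the convexity of $\cF$: for $t\in[0,1]$, $f_t = f^* + t(f - f^*)\in\cF$, and the minimality of $f^*$ forces $\tfrac{d}{dt}R(f_t)\big|_{t=0^+}\ge 0$, which rewrites as $\E\{h(X)\xi\}\ge 0$. Since
\[
R(f) - R(f^*) = 2\,\E\{h(X)\xi\} + \E[h(X)^2],
\]
this yields the desired inequality $\E[h(X)^2]\le R(f)-R(f^*)$. Combining with the previous bound gives Condition V2 with $V=(2\sigma+H)^2$.

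No step looks to be a serious obstacle: the main conceptual point is recognizing that one does not need $f^*=\freg$ (so $\E[\xi\mid X]\neq 0$ in general) because the first-order condition at the convex-constrained minimizer $f^*$ provides $\E\{h(X)\xi\}\ge 0$, which is exactly what is required to turn the algebraic identity for $R(f)-R(f^*)$ into a lower bound on $\E[h(X)^2]$.
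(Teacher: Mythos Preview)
Your argument is correct and follows essentially the same route as the paper: the same factorization of the loss difference, the same pointwise conditional bound yielding $(2\sigma+H)^2\,\E[h(X)^2]$, and the same convexity/first-order argument to get $\E[h(X)^2]\le R(f)-R(f^*)$. The only cosmetic difference is that you invoke Minkowski's inequality conditionally on $X$, whereas the paper expands $(2\xi+h(X))^2$ and bounds the cross term via $|\E(\xi\mid X)|\le\sigma$; both give the same $(2\sigma+H)^2$.
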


\begin{proof}
See Section \thmref{sec:pv2}.
\end{proof}

\begin{lemma} \label{le:v3}
Consider the least squares setting: $\ela(y,y') = (y-y')^2$.
Assume that $\cF$ (i.e., $\Theta$) is bounded, and that for any $j\in\{1,\dots,d\}$,
$\E\big[\vp_j(X)^4\big]~<~+~\infty$
and
$\E\big\{\vp_j(X)^2[Y-f^*(X)]^2\big\}<+\infty$.  
Then Condition \textnormal{V2} holds for 
  \begin{align*}
  V= \bigg[ 2& \sqrt{\sup_{f\in\Flin: \E[f(X)^2]=1}\E\big(f(X)^2[Y-f^*(X)]^2\big)}\\
    & \qquad + \sqrt{\sup_{f',f''\in\cF} \E\big([f'(X)-f''(X)]^2\big)} \sqrt{\sup_{f\in\Flin: \E[f(X)^2]=1}\E\big[f(X)^4\big]} \bigg]^2.
  \end{align*}
\end{lemma}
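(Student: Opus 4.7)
The plan is to reduce the verification of Condition V2 to bounding a single second moment by a combination of Minkowski's inequality, a homogeneity argument using the linearity of $\Flin$, and the first-order optimality characterization of $f^*$.

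First I would write, for any $f \in \cF$, setting $g(X) = f^*(X) - f(X)$ and $U = Y - f^*(X)$,
\[
[Y - f(X)]^2 - [Y - f^*(X)]^2 = g(X)\bigl[2U + g(X)\bigr],
\]
so that Condition V2 amounts to controlling $\E\bigl\{g(X)^2[2U + g(X)]^2\bigr\}$ by a constant times $\bR(f)$. Applying Minkowski's inequality to the right-hand factor gives
\[
\sqrt{\E\bigl\{g(X)^2[2U+g(X)]^2\bigr\}} \le 2\sqrt{\E[g(X)^2 U^2]} + \sqrt{\E[g(X)^4]}.
\]

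Next I would use that $g = f^* - f \in \Flin$ (since $\cF \subset \Flin$) to apply a homogeneity/normalization trick. Setting $\alpha^2 = \E[g(X)^2]$ and $\tilde g = g/\alpha$ when $\alpha > 0$, the function $\tilde g \in \Flin$ satisfies $\E[\tilde g(X)^2] = 1$, so
\[
\E[g(X)^2 U^2] \le \alpha^2 \sup_{h \in \Flin : \E[h(X)^2]=1} \E[h(X)^2 U^2],
\]
and similarly $\E[g(X)^4] \le \alpha^4 \sup_{h \in \Flin : \E[h(X)^2]=1} \E[h(X)^4]$. For the quartic term I would further bound one factor of $\alpha^2$ by the $L^2$-diameter of $\cF$, using $\alpha^2 \le \sup_{f',f'' \in \cF} \E\bigl\{[f'(X) - f''(X)]^2\bigr\}$ since both $f^*, f \in \cF$. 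Combining these estimates yields
\[
\sqrt{\E\bigl\{g(X)^2[2U+g(X)]^2\bigr\}} \le \alpha\,\sqrt{V},
\]
with $V$ exactly as stated in the lemma.

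The final step is the key link to the excess risk: I would invoke the first-order optimality of $f^*$ over the convex set $\cF$ to get $\E[(Y - f^*(X))(f(X) - f^*(X))] \le 0$, hence
\[
\bR(f) = 2\E[U \cdot (-g(X))] + \E[g(X)^2] \ge \E[g(X)^2] = \alpha^2.
\]
Squaring the Minkowski bound then gives $\E\bigl\{g(X)^2[2U+g(X)]^2\bigr\} \le V\alpha^2 \le V\bR(f)$, which is Condition V2. The case $\alpha = 0$ is trivial since then both sides vanish. The only delicate step is the first-order optimality argument, since $\cF$ is only assumed bounded (not necessarily a ball), but the convexity of $\cF$ built into the definition via the convex set $\cC$ suffices; all remaining computations are routine Cauchy--Schwarz and normalization.
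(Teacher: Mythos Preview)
Your proposal is correct and follows essentially the same route as the paper: the algebraic decomposition of the loss difference, Minkowski in $L^2$, the normalization by $\alpha^2=\E[g(X)^2]$ to pass to suprema over the $L^2$-unit sphere of $\Flin$, and the convexity-based inequality $\bR(f)\ge \E[(f-f^*)^2]$ are exactly the paper's steps. Your remark that convexity of $\cC$ (hence of $\cF$) is what makes the first-order optimality argument work is also how the paper justifies that last step.
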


\begin{proof}
See Section \thmref{sec:pv3}.
\end{proof}

\section{Proofs}

\subsection{Main ideas of the proofs}

The goal of this section is to explain the key ingredients appearing in the proofs which both allow to obtain sub-exponential tails 
for the excess risk under a non-exponential moment assumption and get rid of the logarithmic factor in the excess risk bound.

\subsubsection{Sub-exponential tails under a non-exponential moment assumption via truncation}

Let us start with the idea allowing us to prove exponential inequalities under just a moment assumption (instead of the traditional exponential moment assumption).
To understand it, we can consider the (apparently) simplistic $1$-dimensional situation in which we have $\Theta=\R$ and the marginal distribution of $\varphi_1(X)$ is the Dirac distribution at $1$. 
In this case, the risk of the prediction function $f_\th$ is $R(f_\th)=\E 
\bigl[ (Y-\th)^2 \bigr] =\E \bigl[ ( Y- \E Y )^2 \bigr]  
+ (\E Y -\th )^2,$ 
so that the least squares regression problem boils down to the estimation of the mean of the output variable.
If we only assume that $Y$ admits a finite second moment, say $\E (Y^2)\le 1$, it is not clear whether for any $\eps>0$, it is possible to find $\hth$ such that
with probability at least $1-2\eps$,
  \beglab{eq:tar1}
  R(f_{\hth})-R(f^*) = \bigl( \E (Y) - \hth \, \bigr)^2 \le \frac{c \, \logeps}{n},
  \endlab
for some numerical constant $c$.
Indeed, from Chebyshev's inequality, the trivial choice $\hth=\frac{1}{n} 
\sum_{i=1}^n Y_i$ just satisfies: with probability at least $1-2\eps$,
  $$R(f_{\hth})-R(f^*) \le \frac1{n\eps},$$
which is far from the objective \eqref{eq:tar1} for small confidence levels (consider $\eps=\exp(-\sqrt{n})$ for instance).
The key idea is thus to average (soft) \emph{truncated} values of the outputs. This is performed by
taking
  $$\hth = \frac1{n\lam}\sum_{i=1}^n \log\bigg(1+\lam Y_i+\frac{\lam^2Y_i^2}{2}\bigg),$$
with $\lam=\sqrt{\frac{2 \logeps}n}$ (this mean estimator thus depends on the confidence level parameter $\eps$).
Since we have
  $$\log\E \exp(n\lam \hth) = n \log\bigg( 1+ \lam \E(Y) + \frac{\lam^2}2 \E( Y^2 ) \bigg) \le n\lam \E( Y )+ n \frac{\lam^2}2,$$
the exponential Chebyshev's inequality (see Lemma \ref{le:pac}) guarantees that with probability at least $1-\eps$, we have
  $
  n\lam (\hth-\E(Y)) \le n \frac{\lam^2}2 + \logeps
  $,
hence
  $$\hth-\E (Y)\le \sqrt\frac{2\logeps}{n}.$$
Replacing $Y$ by $-Y$ in the previous argument, we obtain that 
with probability at least $1-\eps$, we have
  $$
  n\lam \bigg\{ \E(Y)+\frac1{n\lam}\sum_{i=1}^n \log\bigg(1-\lam Y_i+\frac{\lam^2Y_i^2}{2}\bigg) \bigg\} \le n \frac{\lam^2}2 + \logeps.
  $$

Since $-\log(1+x+x^2/2) \le \log(1-x+x^2/2)$, this implies
$$
\E (Y)-\hth\le \sqrt\frac{2\logeps}{n}.
$$
The two previous inequalities imply Inequality \eqref{eq:tar1} (for $c={2}$), showing that sub-exponential tails are achievable even when we only assume that the random variable admits a finite second moment (see \cite{Cat09} for more details 
on the robust estimation of the mean of a random variable).

\subsubsection{Localized PAC-Bayesian inequalities to eliminate a logarithm factor}


The analysis of statistical inference generally relies on upper bounding the supremum of an empirical process $\chi$ indexed by the functions in a model $\cF$. One central tool to obtain these bounds are the concentration inequalities.
An alternative approach, called the PAC-Bayesian one, consists in using the entropic equality
  \beglab{eq:iexp}
  \E \exp\Bigg( \sup_{\rho\in\M} \bigg\{ \int \rho(df) \chi(f) - K(\rho,\pi') \bigg\} \Bigg)= \int\pi'(df) \E \exp\big( \chi(f) \big).
  \endlab
where $\M$ is the set of probability distributions on $\cF$ and $K(\rho,\pi')$ is the Kullback-Leibler divergence (whose definition is recalled in \myeq{eq:kl}) between $\rho$ and some fixed distribution $\pi'$.

Let $\chr:\cF \ra \R$ be an observable process such that for any $f\in\cF$, we have
  $$
  \E \exp\big(\chi(f)\big) \le 1
  $$
for $\chi(f) =\lam[ R(f) - \chr(f)]$ and some $\lam>0$. Then, as a consequence
of  \eqref{eq:iexp}, for any $\eps>0$, with probability at least $1-\eps$,
for any distribution $\rho$ on $\cF$, 
  \beglab{eq:ipac}
  \int \rho(df) R(f) \le \int \rho(df) \chr(f) + \frac{K(\rho,\pi')+\logeps}{\lam}.
  \endlab
The left-hand side quantity represents the expected risk with respect to the distribution $\rho$. 
The question is now how to use \eqref{eq:ipac} to design a posterior distribution $\rho$ for which 
$\int \rho(df) R(f)$ is guaranteed to be small.
The constraint on the choice of $(\rho,\pi')$ is that $\rho$ should be computable from the data (e.g., it cannot depend on $R$) and $\pi'$ should not depend on the data: it may depend on $R$ (in contrast with Bayesian prior distributions!) but not on $\chr$.
Simple choices like $(\rho,\pi')=(\delta_{f^*},\delta_{f^*})$ or $(\rho,\pi')=(\delta_{\check{f}},\delta_{\check{f}})$ for $\check{f}\in\argmin_{f\in\cF} \chr(f)$, where $\delta_a$ denotes the Dirac distribution at the function $f$, are thus forbidden (while they would have led to small right-hand side of \eqref{eq:ipac}).

For fixed $\pi'$, the posterior distribution minimizing the right-hand side of \eqref{eq:ipac} is $\rho=\pi'_{-\lam \chr}$. It is computable from the data if $\pi'$ is. 
Without prior knowledge, this would lead to take a ``flat'' distribution 
for $\pi'$ (e.g., the one induced by the Lebesgue measure in the case of a model $\cF$ defined by a bounded parameter set in some Euclidean space). The resulting Kullback-Leibler divergence might be very large
as it compares a distribution with a sharp peak (concentrated on functions $f\in\cF$ for which $\chr(f)$) with a flat one. 

To get a smaller Kullback-Leibler divergence, we can take posterior and prior distributions which are peaked around almost the same function. This can be done by taking $\pi$ and $\rho$ respectively concentrated around $f^*$ and $\check{f}$. More precisely, one can take posterior distributions of the form $\rho=\pi_{-\lam \chr}$ for some $\lam>0$ and a ``flat'' distribution $\pi$ computable without knowing neither the distribution $P$ generating the data nor the training data (in particular, $\pi$ must not depend on $R$ or $\chr$), and a ``localized'' prior distribution $\pi'=\pi_{-\be R}$ for some $\be>0$.
The parameters $\lam$ and $\be$ controlling the sharpness of the peaks at $\argmin_{f\in\cF} R(f)*$ and $\argmin_{f\in\cF} \chr(f)$ 
should be taken such that the peaks overlap (to ensure that the 
Kullback-Leibler divergence is small) and 
are in the same time sharp enough (to ensure that $\int \rho(df) \chr(f)$ is small).
The use of the ``localized'' prior distribution $\pi'=\pi_{-\be R}$ implies an additional technical difficulty as one needs to control the 
divergence $K(\rho,\pi_{-\be R})$. This is achieved by writing
  $$
  K(\rho,\pi_{-\be R})=K(\rho,\pi)+\log\bigg( \int \exp[-\be R(f)]\, \pi(df) \bigg) + \beta \int R(f) \, \rho(df),
  $$ 
and controlling the new logarithmic term through PAC-Bayesian inequalities.

\subsection{Proof of Theorem \ref{th:gen}} \label{sec:pgeneric}

We use the standard way of obtaining PAC bounds through 
upper bounds on Laplace transforms of appropriate random variables.
This argument is synthesized in the following result.

\begin{lemma} \label{le:pac}
For any $\eps>0$ and any real-valued random variable $V$ such that $\E \bigl[ 
\exp(V) \bigr] \le 1$,
with probability at least $1-\eps$, we have
    \[
    V \le \logeps.
    \]
\end{lemma}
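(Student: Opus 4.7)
The plan is to apply Markov's inequality to the nonnegative random variable $\exp(V)$ on the level $\eps^{-1}$. Concretely, I would first observe that the event $\{V > \log(\eps^{-1})\}$ is exactly the event $\{\exp(V) > \eps^{-1}\}$, since the exponential is strictly increasing. This reformulation is the only real idea needed; it converts a deviation bound on $V$ (whose distribution could be essentially arbitrary) into a tail bound on a nonnegative variable whose first moment we control by hypothesis.

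Next, I would invoke Markov's inequality in the form $\P(W \ge t) \le t^{-1}\E[W]$ for the nonnegative variable $W = \exp(V)$ at the threshold $t = \eps^{-1}$. This gives
\[
\P\bigl(V > \log(\eps^{-1})\bigr) \;=\; \P\bigl(\exp(V) > \eps^{-1}\bigr) \;\le\; \eps \,\E\bigl[\exp(V)\bigr] \;\le\; \eps,
\]
where the last inequality uses the assumption $\E[\exp(V)] \le 1$. Taking complements yields $\P(V \le \log(\eps^{-1})) \ge 1-\eps$, which is the claim.

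There is essentially no obstacle here: the proof is a one-line application of Markov's inequality, often called the exponential Chebyshev (or Cram\'er--Chernoff) trick. The only minor care point is that $V$ is allowed to be any real-valued random variable (not necessarily nonnegative or integrable), but this causes no issue because $\exp(V)$ is automatically nonnegative and its expectation is assumed finite (in fact bounded by $1$), so Markov applies directly without any additional integrability verification.
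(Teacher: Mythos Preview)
Your proof is correct and is exactly the standard exponential Chebyshev (Markov applied to $\exp(V)$) argument. The paper itself does not spell out a proof of this lemma, treating it as standard, so there is nothing further to compare.
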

\begin{multline*}
\text{Let }  V_1(\hf) = \int \big[ \La(\hf,f) + \gas \bR(f) \big] \pis_{-\gas \bR}(df) - \ga \bR(\hf) \\
		- \I^*(\gas) + \I(\ga)
	    + \log \bigg(\int \exp \bigl[ -\hcE(f) \bigr] \pi(df)\bigg) 
	    - \log \biggl[  \frac{d \rho}{d \hpi}\bigl(\hf\bigr) \biggr],
    \end{multline*}
    \[
\text{and }    V_2 = - \log \bigg(\int \exp\bigl[ -\hcE(f)
\bigr]  \pi(df)\bigg) + \log \bigg(\int \exp \bigl[ -\cEb(f)\bigr] \pi(df)\bigg)
    \]
To prove the theorem, according to Lemma \ref{le:pac}, it suffices to prove that
$$
\E \Bigl\{ \tint \exp \bigl[ V_1(\hf) \bigr]  \rho(d \hf) 
\Bigr\}  \le 1
\quad \text{and} \quad \E \Bigl[ \tint \exp (V_2 ) \rho(d \hf) \Bigr]  \le 1.
$$
These two inequalities are proved in the following two sections.

\subsubsection{Proof of $\E \Bigl\{ \int \exp \bigl[ V_1(\hf) 
\bigr]  \rho(d \hf) \Bigr\} \le 1$}

From Jensen's inequality, we have
    \begin{align*}
    \int & \big[ \La(\hf,f) + \gas \bR(f) \big] \pis_{-\gas \bR}(df)\\
        & = \int \big[ \hL(\hf,f) + \gas \bR(f) \big]  \pis_{-\gas \bR}(df) + \int \big[ \La(\hf,f) - \hL(\hf,f) \big] \pis_{-\gas \bR}(df)\\
        & \le \int \big[ \hL(\hf,f) + \gas \bR(f) \big] \pis_{-\gas \bR}(df) + \log \int \exp \bigl[ \La(\hf,f) - \hL(\hf,f) \bigr] \pis_{-\gas \bR}(df).
    \end{align*}

From Jensen's inequality again, 
    \begin{align*}
    - \hcE(\hf) & = - \log \int \exp \bigl[ \hL(\hf,f) \bigr] \pis(df)\\
    & = - \log \int \exp \bigl[ \hL(\hf,f)+\gas \bR(f) \bigr] \pis_{-\gas \bR}(df)
        - \log \int \exp \bigl[ -\gas \bR(f)\bigr]  \pis(df)\\
    & \le - \int [ \hL(\hf,f)+\gas \bR(f) ] \pis_{-\gas \bR}(df) + \I^*(\gas).
    \end{align*}
    
From the two previous inequalities, we get
    \begin{align*}
    V_1(\hf) & \le \int \big[ \hL(\hf,f) + \gas \bR(f) \big]  \pis_{-\gas \bR}(df) \\ 
& \qquad + \log \int \exp \bigl[ \La(\hf,f)- \hL(\hf,f) \bigr]  
\pis(df) - \ga \bR(\hf) \\
	& \qquad - \I^*(\gas) + \I(\ga)
	    + \log \bigg(\int \exp \bigl[ -\hcE(f) \bigr] \pi(df)\bigg) 
	    - \log \biggl[  \frac{d \rho}{d \hpi}(\hf) \biggr],\\
    & = \int \big[ \hL(\hf,f) + \gas \bR(f) \big] \pis_{-\gas \bR}(df) 
\\ & \qquad + \log \int \exp \bigl[ \La(\hf,f)- \hL(\hf,f)  \bigr]  \pis(df) - \ga \bR(\hf) \\
	& \qquad - \I^*(\gas) + \I(\ga) - \hcE(\hf) - \log\biggl[ 
\frac{d \rho}{d \pi}(\hf) \biggr],\\
    & \le \log \int \exp \bigl[ \La(\hf,f)- \hL(\hf,f) \bigr]  
\pis_{-\gas \bR}(df) \\ & \qquad - \ga \bR(\hf) + \I(\ga) - \log\biggl[ 
\frac{d \rho}{d \pi}(\hf) \biggr]\\
    & = \log \int \exp \bigl[ \La(\hf,f)- \hL(\hf,f) \bigr]  \pis_{-\gas \bR}(df) + \log \biggl[ \frac{d \pi_{-\ga \bR}}{d \rho}(\hf) \biggr],
	\end{align*}
hence, by using Fubini's inequality and the equality 
\begin{multline*}
{} \hfill \E \Bigl\{ \exp \bigl[ - \hL(\hf,f) \bigr] \Bigr\}  = \exp \bigl[ -\La(\hf,f)
\bigr], \hfill {} \\ 
\shoveleft{\hspace{-2ex}\text{we obtain } \E \int \exp \bigl[ V_1(\hf) \bigr]  
\rho(d\hf)}  \\ \le \E \int \bigg(\int \exp \bigl[ \La(\hf,f) - \hL(\hf,f)\bigr]  
\pis_{-\gas \bR}(df) \bigg) \pi_{-\ga \bR}(d\hf)\\
     =  \int \bigg(\int \E \exp \bigl[ \La(\hf,f) - \hL(\hf,f) \bigr] 
\pis_{-\gas \bR}(df) \bigg) \pi_{-\ga \bR}(d\hf)
     = 1. 
	\end{multline*}
	
\subsubsection{Proof of $\,\E \Bigl[ \int \exp (V_2) \rho(d \hf) \Bigr]  \le 1$}

It relies on the following result.

\begin{lemma} \label{le:concpart}
Let $\W$ be a real-valued measurable function defined on a product space $\A_1\times\A_2$ and
let $\mu_1$ and $\mu_2$ be probability distributions on respectively $\A_1$ and $\A_2$.
\bi
\item if $\expec{a_1}{\mu_1} \Bigl\{ 
 \log \Bigl[ \expec{a_2}{\mu_2} \bigl\{ \exp \bigl[ -\W(a_1,a_2) \bigr] 
\bigr\} \Bigr] \Bigr\}  < +\infty$, then we have
\begin{multline*}
        - \expec{a_1}{\mu_1} \Bigl\{ 
\log \Bigl[ \expec{a_2}{\mu_2} \bigl\{ \exp \bigl[ -\W(a_1,a_2)\bigr] \bigr\}
\Bigr] \Bigr\}  \\ 
\le - \log \Bigl\{ \expec{a_2}{\mu_2} \Bigl[ \exp \bigl[ 
-\expec{a_1}{\mu_1} \W(a_1,a_2) \bigr] \Bigr] \Bigr\}.
\end{multline*}
\item if $\W>0$ on $\A_1\times\A_2$ and $\expec{a_2}{\mu_2} 
\Bigl\{ \expec{a_1}{\mu_1} \bigl[ \W(a_1,a_2) \bigr]^{-1} 
\Bigr\}^{-1} < +\infty$, then 
$$
\expec{a_1}{\mu_1} \Bigl\{ \expec{a_2}{\mu_2} \Bigl[ \W(a_1,a_2)^{-1} 
\Bigr]^{-1} \Bigr\}  \le \expec{a_2}{\mu_2} \Bigl\{ \expec{a_1}{\mu_1} 
\bigl[ \W(a_1,a_2) \bigr]^{-1} \Bigr\}^{-1}.
$$

\ei
\end{lemma}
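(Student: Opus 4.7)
The plan is to prove both parts by showing that a certain functional on positive measurable functions is \emph{concave} and then applying Jensen's inequality (implemented safely through a variational characterisation, to avoid Banach-space subtleties).

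For Part (i), define, for a measurable $W:\A_2\to\R$,
\[
F(W) \eqdef -\log \expec{a_2}{\mu_2}\bigl\{\exp[-W(a_2)]\bigr\}.
\]
By the Donsker--Varadhan duality (or directly by elementary computation of the Fenchel conjugate of $\rho \mapsto K(\rho,\mu_2)$), one has the variational representation
\[
F(W) = \inf_{\rho} \Bigl\{ \textstyle\int W\,d\rho + K(\rho,\mu_2)\Bigr\},
\]
where the infimum is over probability measures $\rho$ on $\A_2$ absolutely continuous with respect to $\mu_2$. Being the infimum of linear functionals of $W$, $F$ is concave. Swapping infimum and expectation in $a_1$ then gives
\[
F\bigl(\expec{a_1}{\mu_1} W(a_1,\cdot)\bigr) = \inf_\rho \Bigl\{ \expec{a_1}{\mu_1}\textstyle\int W(a_1,\cdot)\,d\rho + K(\rho,\mu_2)\Bigr\} \ge \expec{a_1}{\mu_1} F\bigl(W(a_1,\cdot)\bigr),
\]
which is exactly the stated inequality. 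The Fubini step is legitimate under the integrability hypothesis of Part (i).

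For Part (ii), define the harmonic-mean functional, for a positive measurable $W:\A_2\to (0,+\infty)$,
\[
H(W) \eqdef \Bigl(\expec{a_2}{\mu_2}\bigl[W(a_2)^{-1}\bigr]\Bigr)^{-1}.
\]
The key step is the variational identity
\[
H(W) = \inf_{k \ge 0,\ \expec{a_2}{\mu_2} k = 1} \expec{a_2}{\mu_2}\bigl[k(a_2)^2\, W(a_2)\bigr],
\]
which follows from Cauchy--Schwarz: for every admissible $k$, $1 = (\expec{a_2}{\mu_2} k)^2 \le \expec{a_2}{\mu_2}[k^2 W] \cdot \expec{a_2}{\mu_2}[W^{-1}]$, with equality at $k = W^{-1}/\expec{a_2}{\mu_2}[W^{-1}]$. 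Since $H$ is the infimum of linear functionals in $W$, it is concave. Then
\[
H\bigl(\expec{a_1}{\mu_1} W(a_1,\cdot)\bigr) = \inf_k \expec{a_2}{\mu_2}\bigl[k^2 \expec{a_1}{\mu_1} W\bigr] = \inf_k \expec{a_1}{\mu_1}\expec{a_2}{\mu_2}\bigl[k^2 W\bigr] \ge \expec{a_1}{\mu_1} \inf_k \expec{a_2}{\mu_2}\bigl[k^2 W\bigr] = \expec{a_1}{\mu_1} H\bigl(W(a_1,\cdot)\bigr),
\]
which is the desired inequality. Fubini is justified because both sides are finite under the Part (ii) hypothesis.

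The one genuinely delicate point is Part (ii): proving a clean concavity statement for the harmonic mean and then transferring it to a Jensen inequality in an infinite-dimensional setting. Working with the variational representation lets us sidestep any measurability or lifting issues, because the Jensen-for-concave step collapses into the elementary fact $\inf \expec{a_1}{\mu_1} \ge \expec{a_1}{\mu_1} \inf$. The proof of Part (i) is structurally identical once one recognises $F$ as the negative log-Laplace transform and writes it as the Legendre transform of the KL functional.
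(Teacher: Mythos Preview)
Your argument for Part (i) is essentially the paper's own: both invoke the Donsker--Varadhan duality
\[
-\log \expec{a_2}{\mu_2}\bigl[\exp(-h)\bigr] = \inf_{\rho}\bigl\{\expec{a_2}{\rho}[h] + K(\rho,\mu_2)\bigr\},
\]
and then use $\expec{a_1}{\mu_1}\inf_\rho \le \inf_\rho \expec{a_1}{\mu_1}$ together with Fubini. There is nothing to add there.

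Part (ii) is where you and the paper diverge. The paper does \emph{not} introduce a new variational formula; instead it observes that the harmonic mean is the composition $H(W)=\exp\bigl(F(\log W)\bigr)$ with $F$ from Part (i), rewrites everything accordingly, and then applies the duality formula twice together with Part (i) itself (with the roles of $\A_1$ and $\A_2$ swapped and $\W$ replaced by $-\log\W$). Your route is different: you produce an independent variational identity
\[
H(W)=\inf_{k\ge 0,\ \expec{a_2}{\mu_2}[k]=1}\ \expec{a_2}{\mu_2}\bigl[k^2 W\bigr]
\]
via Cauchy--Schwarz, and then run the same $\inf/\E$ swap. This is correct and arguably more elementary, since it avoids the KL machinery entirely for Part (ii) and never needs to call Part (i). What the paper's approach buys is economy---a single duality formula drives both parts and exhibits Part (ii) as a corollary of Part (i)---whereas your approach buys transparency, making the concavity of the harmonic mean explicit through a one-line Cauchy--Schwarz argument rather than a change of variables inside a log-Laplace transform.
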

\noindent{\sc Proof.}
\bi
\item 
Let $\A$ be a measurable space and $\M$ denote the set of probability distributions on $\A$.
The Kullback-Leibler divergence between a distribution $\rho$ and a distribution~$\mu$ is
\beglab{eq:kl}
K(\rho,\mu) \eqdef \begin{cases}\ds  
\undc{\E}{a\sim\rho} \log \biggl[ \frac{d \rho}{d \mu}(a) \biggr] 
& \text{if } \rho \ll \mu,\\
\ds + \infty & \text{otherwise,}
\end{cases} 
\endlab
where $\ds \frac{d \rho}{d \mu}$ denotes as usual 
the density of $\rho$ $\wrt$ $\mu$.
The Kullback-Leibler divergence satisfies the duality formula (see, e.g., 
\cite[page 159]{Cat01}): for any 
real-valued measurable function $h$ defined on $\A$,
\beglab{eq:legendre}
\und{\inf}{\rho\in\M} \big\{ \undc{\E}{a\sim\rho} h(a) +
K(\rho,\mu) \big\} = -\log \expec{a}{\mu} \Bigl\{ \exp \bigl[ -h(a) \bigr] \Bigr\}.
\endlab
By using twice \eqref{eq:legendre} and Fubini's theorem, we have
        \begin{align*}
        - \expec{a_1}{\mu_1} \Bigl\{ \log  \Bigl\{ & 
\expec{a_2}{\mu_2} \Bigl[ \exp \bigl[
-\W(a_1,a_2) \bigr]\Bigr] \Bigr\} \Bigr\}\\
                & = \expec{a_1}{\mu_1} \Bigl\{ 
\und{\inf}{\rho} \big\{ \expec{a_2}{\rho}  \bigl[ \W(a_1,a_2) \bigr]  + K(\rho,\mu_2) \big\} \Bigr\} \\
        & \le \und{\inf}{\rho} \Bigl\{ \expec{a_1}{\mu_1} 
\Bigl[ \expec{a_2}{\rho} \bigl[ \W(a_1,a_2) \bigr] + K(\rho,\mu_2) 
\Bigr] \Bigr\} \\
        & = - \log \Bigl\{ \expec{a_2}{\mu_2} 
\Bigl[ \exp \bigl\{ -\expec{a_1}{\mu_1} \bigl[ \W(a_1,a_2) \bigr] 
\bigr\} \Bigr] \Bigr\}.
        \end{align*}
\item By using twice \eqref{eq:legendre} and the first assertion of Lemma \ref{le:concpart}, we have
        \begin{multline*}
\expec{a_1}{\mu_1} \Bigl\{ \expec{a_2}{\mu_2} \Bigl[ \W(a_1,a_2)^{-1} 
\Bigr]^{-1} \Bigr\} 
           \\ = \expec{a_1}{\mu_1}\Bigl\{ 
\exp \Bigl\{ -\log \Bigl[ \expec{a_2}{\mu_2} \bigl\{ 
\exp \bigl[-\log \W(a_1,a_2)
\bigr] \bigr\} \Bigr] \Bigr\} \Bigr\} \\
= \expec{a_1}{\mu_1} \Bigl\{ \exp \Bigl\{ \inf_\rho \Bigl[ 
\expec{a_2}{\rho}  \bigl\{ \log \bigl[ \W(a_1,a_2) \bigr] 
\bigr\}  + K(\rho,\mu_2) \Bigr] \Bigr\} \Bigr\}  \\
           \le \inf_\rho \Bigl\{ 
\exp \bigl[ K(\rho,\mu_2) \bigr]  \expec{a_1}{\mu_1} 
\Bigl\{ \exp \Bigl\{  \expec{a_2}{\rho} 
\Bigl[ \log \bigl[ \W(a_1,a_2) \bigr] \Bigr] \Bigr\} \Bigr\} \\
           \le \inf_\rho \Bigl\{ \exp \bigl[  
K(\rho,\mu_2)\bigr]  \exp \Bigl\{ \expec{a_2}{\rho} 
\Bigl\{ \log \Bigl[ \expec{a_1}{\mu_1} \bigl[ \W(a_1,a_2) \bigr] 
\Bigr] \Bigr\} \Bigr\} \\
= \exp \Bigl\{ \inf_\rho \Bigl\{ 
\expec{a_2}{\rho} \Bigl[ \log \bigl\{ \expec{a_1}{\mu_1} 
\bigl[ \W(a_1,a_2) \bigr] \bigr\} \Bigr]  +K(\rho,\mu_2) \Bigr\} \Bigr\} \\
= \exp \Bigl\{ - \log \Bigl\{ 
\expec{a_2}{\mu_2} \Bigl\{ \exp \Bigl[ - \log 
\bigl\{ \expec{a_1}{\mu_1} \bigl[ \W(a_1,a_2) \bigr] \bigr\} \Bigr] \Bigr\} 
\Bigr\} \Bigr\} \\
= \expec{a_2}{\mu_2} \Bigl\{ \expec{a_1}{\mu_1} \bigl[ \W(a_1,a_2) 
\bigr]^{-1} \Bigr\}^{-1}. \quad \square
\end{multline*}
\ei

From Lemma \ref{le:concpart} and Fubini's theorem, since $V_2$ does not depend on $\hf$, we have
\begin{multline*}
\E \biggl[ \int  \exp (V_2) \rho(d \hf) 
\biggr]    = \E \bigl[ \exp (V_2) \bigr] \\
= \int \exp \bigl[-\cEb(f) \bigr] \,  \pi(df) 
\, \E \biggl\{ \biggl[ \int \exp \bigl[ -\hcE(f) \bigr] \, \pi(df) 
\biggr]^{-1} \biggr\} \\
\le \int \exp \bigl[ -\cEb(f) \bigr]  \pi(df) 
\, \biggl\{  \int \E \bigl[ \exp \bigl( \hcE(f) \bigr) \bigr]^{-1} 
\pi(df) \biggr\}^{-1}\\
= \int \exp \bigl[ -\cEb(f) \bigr] \pi(df) \, \biggl\{ 
\int \E \biggl[ \int \exp \bigl[ \hL(f,f') \bigr] \pis(df') \biggr]^{-1}  
\pi(df) \biggr\}^{-1}\\
     = \int \exp \bigl[ -\cEb(f) \bigr] \pi(df) \biggl\{ 
 \int \biggl[ \int \exp \bigl[ \Lb(f,f') \bigr]  \pis(df') 
\biggr]^{-1}  \pi(df) \biggr\}^{-1}
= 1.
\end{multline*}
This concludes the proof that
for any $\ga \ge 0$, $\gas \ge 0$ and $\eps>0$, with probability 
(with respect to the distribution $P^{\otimes n} \rho$ generating
the observations $Z_1,\dots,Z_n$ and the randomized prediction function $\hf$) 
at least $1-2\eps$:
\[
V_1(\hf)+V_2 \le 2 \logeps.
\]

\subsection{Proof of Lemma \ref{le:complexity}} \label{sec:complexity}

Let us look at $\cF$ from the point of view of $f^*$. Precisely 
let $\S_{\R^d}(O,1)$ be the sphere of $\R^d$ centered at the origin and with radius $1$
and 
	\[
	\S=\biggl\{ \sum_{j=1}^d \th_j \vp_j 
; (\th_1,\dots,\th_d) \in \S_{\R^d}(O,1) \biggr\}.
	\] 
Introduce 
	$$
	\Omega = \big\{ \sigmb \in \S; \exists u>0 \text{ s.t. } f^* + u \sigmb \in \cF \big\}.
	$$
For any $\sigmb \in \Omega$, let $u_\sigmb = \sup\{u>0:f^*+ u \sigmb \in \cF \}$.
Since $\pi$ is the uniform distribution on the convex set $\cF$ (i.e., the one coming from the uniform distribution on $\cC$), 
we have
\begin{multline*}
	\int \exp \bigl\{ -\alpha[R(f)-R(f^*)] 
\bigr\}  \pi(df) \\ = \int_{\sigmb\in\Omega} \int_0^{u_\sigmb} 
\exp \bigl\{ -\alpha [R(f^*+u\sigmb)-R(f^*)] \bigr\}  
u^{d-1} du d\sigmb.
\end{multline*}
Let $c_\sigmb=\E [\sigmb(X) \ela_Y'(f^*(X))]$
and $a_\sigmb = \E \bigl[ \sigmb^2(X) \bigr]$. 
Since 
$$
f^*\in\argmin_{f\in\cF} \E \bigl\{ \ela_Y \bigl[ f(X) 
\bigr] \bigr\}, 
$$
we have $c_\sigmb\ge 0$ (and $c_\sigmb=0$ if both $-\sigmb$ and $\sigmb$ belong to $\Omega$).
Moreover from Taylor's expansion, 
$$
\frac{b_1 a_\sigmb u^2}{2} \le 
	R(f^*+u\sigmb)-R(f^*) - u c_\sigmb 
		\le \frac{ b_2 a_\sigmb u^2}{2}.
$$
Introduce 
	\[
	\psi_\sigmb = \frac{\int_0^{u_\sigmb} \exp \bigl\{ 
-\alpha[ u c_\sigmb +\demi b_1 a_\sigmb u^2]\bigr\} u^{d-1} du} 
{\int_0^{u_\sigmb} \exp \bigl\{ -\be[ u c_\sigmb+\demi b_2 a_\sigmb u^2]
\bigr\}  u^{d-1} du}.
	\]
For any $0<\alpha<\be$, we have
	\[
	 \frac{\int \exp \bigl\{ -\alpha[R(f)-R(f^*)] \bigr\}  \pi(df)}{
\int \exp \bigl\{ -\be[R(f)-R(f^*)]\bigr\}  \pi(df)} 
		\le \und{\inf}{\sigmb\in\S} \psi_\sigmb.
	\]	
For any $\zeta> 1$, by a change of variable, 
	\begin{align*}
	 \psi_\sigmb & < \zeta^d
		\frac{\int_0^{u_\sigmb} \exp \bigl\{ -\alpha[ \zeta u c_\sigmb +\demi b_1 a_\sigmb \zeta^2 u^2] \bigr\} u^{d-1} du}
		{\int_0^{u_\sigmb} \exp \bigl\{ -\be[ u c_\sigmb +\demi b_2 a_\sigmb u^2]
\bigr\} u^{d-1} du}\\
	 & \le \zeta^d \und{\sup}{u>0}
		\exp \bigl\{ \be[ u c_\sigmb + \tfrac{1}{2} b_2 a_\sigmb u^2]
		 -\alpha[ \zeta u c_\sigmb + \tfrac{1}{2} b_1 a_\sigmb \zeta^2 u^2]\bigr\}.
	\end{align*}
Taking $\zeta=\sqrt{(b_2\be)/(b_1\alpha)}$ when $c_\sigmb=0$ and 
$\zeta=\sqrt{(b_2\be)/(b_1\alpha)} \vee (\be/\alpha)$ otherwise,
we obtain $\psi_\sigmb < \zeta^d$, hence
$$
	\log \bigg( \frac{\int \exp \bigl\{ -\alpha[R(f)-R(f^*)]
\bigr\} \pi(df)}{\int \exp \bigl\{ -\be[R(f)-R(f^*)]
\bigr\}  \pi(df)} \bigg)
 \le \begin{cases}
\ds 		\frac{d}{2} \log\big(\frac{b_2\be}{b_1\alpha}\big) 
 \text{ when } \sup_{\sigmb\in\Omega} c_\sigmb=0,\\[2ex] 
\ds d \log\big(\sqrt{\frac{b_2\be}{b_1\alpha}} 
\vee \frac{\be}{\alpha} \big)  \text{ otherwise,}
		\end{cases} 
$$
which proves the announced result.

\subsection{Proof of Lemma \ref{le:v1b}} \label{sec:pv1b}

For $-(2AH)^{-1} \le \lam \le (2AH)^{-1}$, introduce the random variables
    \[
    F = f(X) \quad \text{ \quad } \quad F^* = f^*(X),
    \]
    \[
    \Omega = \ela'_Y(F^*)+(F-F^*) \int_0^1 (1-t) \ela''_Y(F^*+t(F-F^*)) dt,
    \]
    \[
    L=\lam [\ela(Y,F) - \ela(Y,F^*) ],
    \]
and the quantities
    \[
    a(\lam) = \frac{M^2 A^2 \exp (H b_2/A)}{2\sqrt{\pi}(1-|\lam| AH)}
    \]
and 
    \[
    \tA = H b_2/2 + A \log (M) = \frac{A}{2} \log 
\bigl\{  M^2 \exp \bigl[Hb_2/(2A) \bigr] \bigr\}.
    \]

From Taylor-Lagrange formula, we have
    \[
    L = \lam (F-F^*) \Omega.
    \]
    
Since $\E \bigl[  \exp \bigl( |\Omega|/A 
\bigr)\,|\, X \bigr] \le M \exp \bigl[ H b_2/(2A) \bigr]$, 
Lemma \ref{le:stdb} gives
    \[
    \log 
\Bigl\{ \E \Bigr[ \exp \bigl\{ 
\alpha [\Omega - \E(\Omega | X )]/A \bigr\}\, | \, X \Bigr] \Bigr\}  
\le \frac{M^2 \alpha^2 \exp \bigl(H b_2/A\bigr)}{2\sqrt{\pi} (1-|\alpha|)}
    \]
for any ${-1}<\alpha<1$,
and
    \beglab{eq:upbomega}
    \big| \E(\Omega|X) \big| \le \tA.
    \endlab
By considering $\alpha=A\lam[f(x)-f^*(x)]\in[-1/2;1/2]$ for fixed $x\in\X$, we get
    \beglab{eq:tm1}
    \log \Bigl\{ \E \Bigl[ 
\exp \bigl[ L-\E(L|X) \bigr]\,|\, X \Bigr] \Bigr\} \le \lam^2 (F-F^*)^2 a(\lam).
    \endlab
Let us put moreover 
    \[
    \tL= \E(L|X) + a(\lam) \lam^2 (F-F^*)^2.
    \]
Since $-(2AH)^{-1} \le \lam \le (2AH)^{-1}$, we have
    $
    \tL \le |\lam| H \tA + a(\lam) \lam^2 H^2 \le b'
    $
with $b'=\tA/(2A) + M^2 \exp \bigl( H b_2/A \bigr) /(4\sqrt{\pi})$.
Since $L-\E(L) = L-\E( L|X) + \E( L|X) - \E (L)$, by using Lemma \ref{le:stda},
\eqref{eq:tm1} and \eqref{eq:upbomega}, we obtain
    \begin{align*}
    \log \Bigl\{ \E \Bigl[ \exp \bigl[L-\E(L) \bigr] \Bigr] \Bigr\} 
        & \le \log \Bigl\{ \E \Bigl[ \exp \bigl[ \tL-\E 
(\tL) \bigr] \Bigr] \Bigr\} + \lam^2 a(\lam) \E 
\bigl[ (F-F^*)^2 \bigr] \\
    & \le \E \bigl( \tL^2 \bigr) g( b' ) + \lam^2 a(\lam) \E 
\bigl[ (F-F^*)^2 \bigr] \\
    & \le \lam^2 \E \bigl[ (F-F^*)^2 \bigr] \big[ {\tA}^2 g( b' ) + a(\lam) \big],
    \end{align*}       
with $g(u)= \bigl[ \exp(u) - 1 - u \bigr] /u^2$.
Computations show that for any $-(2AH)^{-1} \le  \lam \le (2AH)^{-1}$,
    \[
    \tA^2 g( b' ) + a(\lam) \le \frac{A^2}{4} \exp \Bigl[ M^2 \exp \bigl(H b_2/A \bigr) \Bigr].
    \]
Consequently, for any $-(2AH)^{-1} \le  \lam \le (2AH)^{-1}$, we have 
\begin{multline*}
    \log 
\Bigl\{ \E \Bigl[  \exp \bigl\{ \lam [\ela(Y,F) - \ela(Y,F^*) ]
\bigr\} \Bigr] \Bigr\} \\ \le \lam[R(f)-R(f^*)] 
    + \lam^2 \E \bigl[ (F-F^*)^2 \bigr] \frac{A^2}{4}  
\exp \Bigl[ M^2 \exp \bigl(H b_2/A \bigr) \Bigr].
\end{multline*}
Now it remains to notice that $\E \bigl[ (F-F^*)^2  
\bigr] \le 2 [R(f) - R(f^*)]/b_1.$
Indeed consider the function $\phi(t) = R(f^*+t(f-f^*))-R(f^*),$
where $f\in\cF$ and $t\in[0;1]$. From the definition of $f^*$
and the convexity of $\cF$, we have $\phi\ge 0$ on $[0;1]$, implying 
that $\phi'(0) \geq 0$. Besides
	$\phi(1)=\phi(0)+\phi'(0)+ \int_0^1 (1 - t) \phi''(t) dt$,
where $\phi''(t)$ is defined as
\begin{align*}
\phi''(t) & = \E \Bigl\{ \bigl[ f(X) - f^*(X) \bigr]^2 \ela_Y'' \bigl[ [(1-t)f^*
+ f](X)\bigr] \Bigr\}\\
& \geq b_1 \E \bigl\{ \bigl[ f(X) - f^*(X) \bigr]^2 \bigr\},
\end{align*}
implying that 
	\beglab{eq:proj}
	\frac{b_1}{2} \E(F-F^*)^2 \le R(f) - R(f^*).
	\endlab

\subsection{Proof of Lemma \ref{le:v2}} \label{sec:pv2}

We have
    \begin{align*}
    & \E\Big( \big\{ [Y-f(X)]^2-[Y-f^*(X)]^2 \big\}^2 \Big)\\
       = \ & \E\Big( [f^*(X)-f(X)]^2\big\{2[Y-f^*(X)]+[f^*(X)-f(X)]\big\}^2 \Big)\\
    = \ & \E\Big( [f^*(X)-f(X)]^2\big\{4\E\big([Y-f^*(X)]^2\big|X\big) \\
    & \qquad \qquad +4\E(Y-f^*(X)|X)[f^*(X)-f(X)]+[f^*(X)-f(X)]^2\big\} \Big)\\
    \le \ & \E\Big( [f^*(X)-f(X)]^2\big\{4\sigma^2 +4\sigma |f^*(X)-f(X)|+[f^*(X)-f(X)]^2\big\} \Big)\\
    \le \ & \E\Big( [f^*(X)-f(X)]^2(2\sigma+H)^2\Big)\\
    \le \ & (2\sigma+H)^2 [R(f)-R(f^*)],
    \end{align*}
where the last inequality is the usual relation between excess risk and $L^2$ distance 
using the convexity of $\cF$ (see above \eqref{eq:proj} for a proof).

\subsection{Proof of Lemma \ref{le:v3}} \label{sec:pv3}

Let $\cS = \{ s\in\Flin: \E[s(X)^2]=1\}$. 
Using the triangular inequality in $\B{L}^2$, we get
    \begin{align*}
    & \E\Big( \big\{ [Y-f(X)]^2-[Y-f^*(X)]^2 \big\}^2 \Big)\\
       = \ & \E\Big( \big\{2[f^*(X)-f(X)][Y-f^*(X)]+[f^*(X)-f(X)]^2\big\}^2 \Big)\\
     \le \ & \Big( 2\sqrt{\E\big\{[f^*(X)-f(X)]^2[Y-f^*(X)]^2\big\}}+\sqrt{\E\big\{[f^*(X)-f(X)]^4\big\}} \Big)^2\\
     \le \ & \bigg[ 2\sqrt{\E\big([f^*(X)-f(X)]^2\big)}\sqrt{\sup_{s\in\cS}\E\big(s(X)^2[Y-f^*(X)]^2\big)}\\
    & \qquad + \E\big([f^*(X)-f(X)]^2\big) \sqrt{\sup_{s\in\cS}\E\big[s(X)^4\big]} \bigg]^2\\
    \le \ & V [R(f)-R(f^*)],
    \end{align*}
with 
  \begin{align*}
  V= \bigg[ 2& \sqrt{\sup_{s\in\cS}\E\big(s(X)^2[Y-f^*(X)]^2\big)}\\
    & \qquad + \sqrt{\sup_{f',f''\in\cF} \E\big([f'(X)-f''(X)]^2\big)} \sqrt{\sup_{s\in\cS}\E\big[s(X)^4\big]} \bigg]^2,   
  \end{align*}
where the last inequality is the usual relation between excess risk and $L^2$ distance 
using the convexity of $\cF$ (see above \eqref{eq:proj} for a proof).

\appendix

\section{Uniformly bounded conditional variance is necessary to reach $d/n$ rate} \label{sec:lb}

In this section, we show that the target \eqref{eq:exptarget} cannot be 
reached if we just assume that $Y$ has a finite variance and that the functions 
in $\cF$ are bounded. For this purpose, the following result gives a $1/\sqrt{n}$ lower bound when $d=2$. (Note that it is not implied by the 
$\sqrt{\fracc{\log(1+ d/\sqrt{n})}n}$ lower bound for convex aggregation, proved in
\cite{Tsy03}, and in slightly weaker forms in \cite{Jud00,Yan01b}, since the latter bound is shown for $d\ge \sqrt{n}$.)

For this, consider an input space $\X$ partitioned into two sets $\X_1$ and
$\X_2$: $\X=\X_1\cup\X_2$ and $\X_1\cap\X_2=\emptyset$.
Let $\vp_1(x)=\dsone_{x\in\X_1}$ and $\vp_2(x)=\dsone_{x\in\X_2}$.
Let $\cF= \big\{ \th_1 \vp_1+\th_2 \vp_2 ; (\th_1,\th_2) \in [-1,1]^2 \big\}.$
\begin{thm} \label{th:lb}
For any estimator $\hf$ and any training set size $n\ge 1$, we have
	\beglab{eq:lb}
	\und{\sup}{P} \big\{ \E \bigl[ R(\hf \,) \bigr] - R(f^*) \big\}
		\ge \frac{1}{4\sqrt{n}},
	\endlab
where the supremum is taken with respect to all probability distributions
such that $\freg \in \cF$ and $\Var (Y) \le 1$.
\end{thm}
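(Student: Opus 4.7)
The plan is to apply Le Cam's two-point method. I fix $p = 1/(2\sqrt n)$, pick $x_1 \in \X_1$, $x_2 \in \X_2$, and define two distributions $P_+$, $P_-$ with common $X$-marginal: $X = x_1$ with probability $1 - p$ and $X = x_2$ with probability $p$; $Y = 0$ when $X = x_1$; and, when $X = x_2$, $Y \in \{+a, -a\}$ with $a = 1/\sqrt p$ taking probabilities $\tfrac{1}{2} \pm \tau$ under $P_+$ and $\tfrac{1}{2} \mp \tau$ under $P_-$, for $\tau = \sqrt p/2$. A direct computation gives $\Var(Y) = p a^2(1 - 4p\tau^2) = 1 - 1/(4n) \le 1$, and $\freg$ takes the value $0$ on $\X_1$ and $\pm \mu := \pm 2a\tau = \pm 1$ on $\X_2$, so $\freg \in \cF$ under both distributions. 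Because $\vp_1\vp_2 \equiv 0$, the risk decomposes as $R(\hf) - R(f^*) = (1-p)\hth_1^{\,2} + p(\hth_2 - \theta_2^*)^2 \ge p(\hth_2 - \theta_2^*)^2$, reducing the problem to a scalar mean-estimation lower bound.

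The core step will be a two-point inequality in its Hellinger-affinity form. Writing $\rho(P,Q) = \int \sqrt{dP\, dQ}$, I claim
\[
\max_{i \in \{+,-\}} \E_i\bigl[(\hth_2 - \theta_i^*)^2\bigr] \ge \mu^2\, \rho(P_+,P_-)^{2n}.
\]
This is obtained by pointwise minimization of $(\hth - \mu)^2\, dP_+^n + (\hth + \mu)^2\, dP_-^n$, which equals $4\mu^2\, dP_+^n\, dP_-^n / (dP_+^n + dP_-^n)$, followed by the Cauchy--Schwarz bound $\int dP_+^n dP_-^n/(dP_+^n + dP_-^n) \ge \rho(P_+^n, P_-^n)^2/2$, together with the tensorization identity $\rho(P_+^n, P_-^n) = \rho(P_+, P_-)^n$. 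I anticipate this step is the main obstacle, because the standard Pinsker--Le Cam total-variation bound of the form $(\mu^2/2)(1 - \|P_+^n - P_-^n\|_{TV})$ appears to lose a constant factor, so the Hellinger-affinity formulation (or an equivalent direct computation) seems essential to recover the sharp constant $1/4$.

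Enumerating the atoms of $P_\pm$ yields
\[
\rho(P_+, P_-) = (1-p) + p\sqrt{1 - 4\tau^2} = 1 - p\bigl(1 - \sqrt{1-p}\bigr).
\]
Since $\sqrt{1-p} \ge 1 - p$ on $[0,1]$, one has $\rho \ge 1 - p^2$, and Bernoulli's inequality then gives $\rho^{2n} \ge (1 - p^2)^{2n} \ge 1 - 2np^2 = 1/2$ for $p = 1/(2\sqrt n)$. Combining these pieces,
\[
\sup_P \bigl\{\E[R(\hf)] - R(f^*)\bigr\} \ge p\, \mu^2\, \rho(P_+, P_-)^{2n} \ge \frac{1}{2\sqrt n} \cdot 1 \cdot \frac{1}{2} = \frac{1}{4\sqrt n},
\]
which is the desired bound.
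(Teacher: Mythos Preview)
Your proof is correct. The two hard distributions you build coincide exactly with the paper's $P_\sigma$ (with the identifications $p=\beta$, $a=1/\sqrt\beta$, $\tau=\sqrt\beta/2$), so the construction is the same. The difference is in how the two-point lower bound is extracted: the paper appeals to an external hypercube lower-bound theorem to obtain $\sup_{P\in\{P_-,P_+\}}\{\E[R(\hf)]-R(f^*)\}\ge\beta(1-\beta\sqrt n)$ in one stroke, while you carry out the Le Cam argument by hand via the Hellinger affinity $\rho(P_+,P_-)=(1-p)+p\sqrt{1-p}$, its tensorization, and Bernoulli's inequality to get $\rho^{2n}\ge 1/2$. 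Your route is more elementary and fully self-contained; the paper's is shorter but relies on cited machinery. Both give the same constant $1/(4\sqrt n)$ at $\beta=p=1/(2\sqrt n)$.
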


\begin{proof}
Let $\be$ satisfying $0<\be\le 1$ be some parameter to be chosen later.
Let $P_\sigma$, $\sigma \in\{-,+\}$, be two probability distributions on
$\X \times \R$ such that for any $\sigma \in\{-,+\}$,
	\[P_\sigma(\X_1) = 1-\be,\]
	\[P_\sigma(Y=0|X=x) = 1 \qquad \text{for any } x\in\X_1,\]
and
\begin{multline*}
	P_\sigma\Big(Y=\frac{1}{\sqrt{\be}}\, | \, X=x \Bigr) = 
\frac{1+\sigma\sqrt{\be}}{2} \\ = 1 - P_\sigma \Bigl( Y=-\frac{1}{\sqrt{\be}}
\, | \, X=x \Bigr) 
		\quad \text{for any } x\in\X_2.
\end{multline*}
One can easily check that for any $\sigma\in\{-,+\}$, $\Var_{P_\sigma}(Y) =1-\be \le 1$ and
$\freg(x) = \sigma \vp_2 \in\cF$.
To prove Theorem \ref{th:lb}, it suffices to prove \eqref{eq:lb} when the supremum is taken 
among $P\in\{P_-,P_+\}$. This is done by applying Theorem 8.2 of \cite{Aud08}.
Indeed, the pair $(P_-,P_+)$ forms a $(1,\be,\be)$-hypercube in the sense of Definition 8.2
with edge discrepancy of type I (see (8.5), (8.11) and (10.20) for $q=2$): 
$d_I = 1$. We obtain
	\[
	\und{\sup}{P\in\{P_-,P_+\}} \big\{ \E \bigl[ R(\hf) \bigr] - R(f^*) \big\}
		\ge \be(1-\be\sqrt{n}),
	\]
which gives the desired result by taking $\be=1/(2\sqrt{n}).$
\end{proof}

\section{Empirical risk minimization on a ball: analysis derived from the work of Birg\'e and Massart} \label{sec:bm}

We will use the following covering number upper bound 
\cite[Lemma~1]{Lor1966}

\begin{lemma} \label{le:infty}
If $\cF$ has a diameter upper bounded by $H$ for the $L^\infty$-norm (i.e., $\sup_{f_1,f_2\in \cF,x\in\X} |f_1(x)-f_2(x)| \le H$), then 
for any $0<\delta\le H$, there exists a set $\Fg \subset \cF$, of cardinality $|\Fg|\le (3H/\delta)^d$ such that
for any $f\in\cF$ there exists $g\in\Fg$ such that $\|f-g\|_{\infty} \le \delta.$
\end{lemma}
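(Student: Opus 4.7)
The plan is to reduce the statement to the classical volume (packing) bound for covering numbers in a finite-dimensional normed space, applied to the ambient space $\Flin = \Span\{\vp_1,\dots,\vp_d\}$ equipped with the seminorm $\|f\|_\infty = \sup_{x\in\X}|f(x)|$. First I would fix any $f_0 \in \cF$ and pass to the translate $\cF - f_0$; by the diameter hypothesis, this set is contained in the closed $\|\cdot\|_\infty$-ball of radius $H$ inside the (at most $d$-dimensional) vector space $\Flin$.

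Next I would take $\mathcal{N}$ to be a \emph{maximal} $\delta$-separated subset of $\cF - f_0$ for the $L^\infty$ metric, and define $\Fg := \mathcal{N} + f_0 \subset \cF$. Maximality of $\mathcal{N}$ forces every element of $\cF - f_0$ to lie within $L^\infty$-distance $\delta$ of some element of $\mathcal{N}$, so $\Fg$ is automatically a $\delta$-net of $\cF$ consisting of functions drawn from $\cF$ itself. The remaining task is to bound $|\mathcal{N}|$ by a packing argument: the open $\|\cdot\|_\infty$-balls of radius $\delta/2$ centred at the elements of $\mathcal{N}$ are pairwise disjoint (by the $\delta$-separation), and all contained in the ball of radius $H + \delta/2$ around $0$ in $\Flin$. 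Fixing any translation-invariant Lebesgue measure $\mathrm{vol}$ on the $d$-dimensional space $\Flin$ (or on its quotient by the kernel of $\|\cdot\|_\infty$ when this seminorm is degenerate, which only lowers the effective dimension), the scaling identity $\mathrm{vol}(rA) = r^d \mathrm{vol}(A)$ applied to the unit ball yields
\[
|\mathcal{N}| \, (\delta/2)^d \;\le\; (H + \delta/2)^d,
\]
and hence $|\mathcal{N}| \le (1 + 2H/\delta)^d \le (3H/\delta)^d$, using $\delta \le H$ in the last step.

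I do not anticipate any real obstacle, since this is precisely the standard Lorentz volumetric packing argument. The only bookkeeping points to watch are that the bound uses the ambient dimension of $\Flin$ (so it is insensitive to the shape of $\cF$ beyond its $L^\infty$-diameter), that $\Fg$ must be chosen \emph{inside} $\cF$ (which is why I take a maximal separated subset of $\cF$ rather than cover the enclosing ball with arbitrary centres), and that the $L^\infty$ seminorm on $\Flin$ need not be a norm, a degeneracy that becomes harmless after quotienting.
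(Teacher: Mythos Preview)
Your proposal is correct and is precisely the classical Lorentz volumetric packing argument. The paper does not supply its own proof of this lemma; it merely cites \cite[Lemma~1]{Lor1966}, whose proof is exactly the maximal $\delta$-separated set plus volume comparison you carry out, so there is nothing to compare.
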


We apply a slightly improved version of Theorem~5 in Birg\'e and Massart \cite{BirMas98}.
First for homogeneity purpose, we modify Assumption M2 by replacing the 
condition ``$\sigma^2 \ge D/n$'' by ``$\sigma^2 \ge B^2 D/n$'' where
the constant $B$ is the one appearing in (5.3) of \cite{BirMas98}. 
This modifies Theorem 5 of \cite{BirMas98} to the extent that ``$\vee 1$'' 
should be replaced with ``$\vee B^2$''.
Our second modification is to remove the assumption that $W_i$ and $X_i$ 
are independent. A careful look at the proof shows that the result
still holds when (5.2) is replaced by: for any $x\in\X$, and $m\ge2$
	\[\text{E}_s [M^m(W_i)|X_i=x] \le a_m A^m, \qquad\text{for all } i=1,\dots,n.\]
We consider 
	$W = Y - f^*(X)$,
	$\gamma(z,f) = (y-f(x))^2$,
	$\Delta(x,u,v) = |u(x)-v(x)|$,
and
	$M(w)= 2 ( |w| + H )$.
From \eqref{eq:expmom}, for all $m\ge 2$, we have
	$\E \big\{ [(2 (|W|+H)]^m|X=x] \le \frac{m!}{2} [4M (A+H)]^m.$
Now consider $B'$ and $r$ such that Assumption M2 of \cite{BirMas98} holds for $D=d$.
Inequality (5.8) for $\tau=1/2$ of \cite{BirMas98} implies that for any $v\ge \kap \frac{d}{n} (A^2+H^2) \log(2B'+B'r \sqrt{d/n})$,
with probability at least $\ds 1 - \kap \exp \Bigl[ \frac{-n v}{\kap(A^2+H^2)} 
\Bigr]$,
	\[
	R(\hferm)-R(f^*)+r(f^*)-r(\hferm) \le \bigl( \E
\bigl\{ \bigl[\hferm(X)-f^*(X)\bigr]^2 \bigr\} \vee v \bigr)/2  
	\]
for some large enough constant $\kap$ depending on $M$.
Now from Proposition 1 of \cite{BirMas98} and Lemma \ref{le:infty}, one can take 
either $B'=6$ and $r \sqrt{d} = \sqrt{\cR}$ or $B'=3\sqrt{n/d}$ and
$r=1$.  
By using $\E\bigl\{ \bigl[\hferm(X)-f^*(X) \bigr]^2 
\bigr\} \le R(\hferm)-R(f^*)$ (since $\cF$ is convex
and $f^*$ is the orthogonal projection of $Y$ on $\cF$), and
$r(f^*)-r(\hferm)\ge 0$ (by definition of $\hferm$), the desired result
can be derived.

Theorem \ref{th:bmnew} provides a $d/n$ rate provided that the geometrical quantity $\cR$ is at most of order $n$.
Inequality (3.2) of \cite{BirMas98} allows to bracket $\cR$ in terms
of $\bcR = \sup_{f\in\Span \{\vp_1,\dots,\vp_d\}} \fracc{\|f\|_\infty^2}{\E[f(X)]^2}$, namely
	$\bcR \le \cR \le \bcR d$.
To understand better how this quantity behaves and to illustrate some of the presented
results, let us give the following simple example.

{\bf Example 1.} \label{ex1a}
Let $A_1,\dots,A_d$ be a partition of $\X$, i.e., $\X =\sqcup_{j=1}^d A_j$.
Now consider the indicator functions $\vp_j=\dsone_{A_j}, j=1,\dots,d$: $\vp_j$ is equal
to $1$ on $A_j$ and zero elsewhere. Consider 
that $X$ and $Y$ are independent and that $Y$ is a Gaussian random variable with mean $\theta$ and 
variance $\sigma^2$. In this situation: $\flin=\freg=\sum_{j=1}^d \theta \vp_j$.
According to Theorem \ref{th:weakols}, if we know an upper bound $H$ on $\|\freg\|_\infty=\th$, we have that the
truncated estimator $(\hfols\wedge H)\vee -H$ satisfies 
	\[
	\E R(\hfols_H) - R(\flin)	\le \kap \frac{(\sigma^2\vee H^2)d\log n }{n}
	\]
for some numerical constant $\kap$.
Let us now apply Theorem \ref{th:capvit}.
Introduce $p_j=\P(X\in A_j)$ and $p_{\min} = \min_{j} p_j$. 
We have $Q = \big( \E \vp_j(X) \vp_k(X) \big)_{j,k} = \diag(p_j)$,
$\cK=1$ and $\|\th^*\|=\th\sqrt{d}$. We can take $A=\sigma$ and $M=2$.
From Theorem \ref{th:capvit}, for $\lam=d \cL_\eps/n$, as soon as $\lam \le p_{\min}$,
the ridge regression estimator satisfies with probability at least $1-\eps$:
	\beglab{eq:ex1cv}
	R(\hfrlam) - R( \flin ) \le 
		\kap \cL_\eps \frac{d}{n} \bigg( \sigma^2 + \frac{\th^2 d^2 \cL^2_\eps}{n p_{\min}} \bigg)
	\endlab
for some numerical constant $\kap$. When $d$ is large, the term $\fracb{d^2 \cL^2_\eps}{n p_{\min}}$ is felt, 
and leads to suboptimal rates. Specifically, since $p_{\min}\le 1/d$,
the $\rhs$ of \eqref{eq:ex1cv} is greater than $d^4/n^2$, which is much larger than $d/n$ when $d$ is much larger than $n^{1/3}$. 
If $Y$ is not Gaussian but almost surely uniformly bounded by $C<+\infty$, then the randomized estimator proposed in Theorem \ref{th:alqa}
satisfies the nicer property: with probability at least $1-\eps$,
    \[
    R(\hat{f}) - R(\flin) \le \kap (H^2 +C^2) \frac{d \log( 3p_{\min}^{-1} ) + 
        \log ( (\log n) \eps^{-1} ) }{n},
    \]
for some numerical constant $\kap$.
In this example, one can check that $\cR=\cR'=1/p_{\min}$ where $p_{\min}= \min_j \P(X\in A_j).$
As long as $p_{\min}\ge 1/n$, the target \eqref{eq:devtarget} is reached from Corollary \ref{th:bmnew}.
Otherwise, without this assumption, the rate is in $(d\log(n/d))/n$. 
$\blacksquare$ 

\section{Ridge regression analysis from the work of Caponnetto and De Vito} \label{sec:capvit}

From \cite{CapVit07}, one can derive the following risk bound for the ridge estimator.

\begin{thm} \label{th:capvit}
Let $q_{\min}$ be the smallest eigenvalue of the $d \times d$-product matrix $Q=\big( \E \vp_j(X) \vp_k(X) \big)_{j,k}$.
Let $\cK= \sup_{x\in\X} \sum_{j=1}^d \vp_j(x)^2$. Let $\|\th^*\|$ be the Euclidean norm of 
the vector of parameters of $\flin=\sum_{j=1}^d \th^*_j \vp_j$.
Let $0<\eps<1/2$ and $\cL_\eps = \leps$. 
Assume that for any $x\in\X,$ 
	\[\E \Bigl\{ \exp \bigl[ |Y-\flin(X)|/A \bigr]\, | \, X= x 
\Bigr\} \le M.
	\]
For $\lam=\fracl{\cK d \cL_\eps}{n}$, if
$\lam \le q_{\min}$,
the ridge regression estimator satisfies with probability at least $1-\eps$:
	\beglab{eq:capvit}
	R(\hfrlam) - R( \flin ) \le 
		\frac{\kap \cL_\eps d}{n} \bigg( A^2 + \frac{\lam}{q_{\min}} \cK \cL_\eps \|\theta^*\|^2 \bigg)
	\endlab
for some positive constant $\kap$ depending only on $M$.
\end{thm}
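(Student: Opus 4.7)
The plan is to specialize the general RKHS ridge regression risk bound of Caponnetto and De Vito \cite{CapVit07} to our finite-dimensional linear setting. First I would identify $\Flin$ with the RKHS of the bounded kernel $k(x,y) = \lan \vp(x),\vp(y)\ran = \sum_{j=1}^d \vp_j(x)\vp_j(y)$. Under this identification, the RKHS inner product coincides with the Euclidean inner product on $\R^d$, the kernel bound is $\sup_x k(x,x) = \cK$, and the covariance operator $T$ on $L^2(P_X)$ is represented in coordinates by the Gram matrix $Q$; in particular, the RKHS norm of $\flin$ equals $\|\th^*\|$, the nonzero spectrum of $T$ is contained in $[q_{\min},\cK]$, and the ridge estimator $\hfrlam$ is exactly the one analyzed in \cite{CapVit07}.

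Next, I would verify the hypotheses of their main theorem. Expanding the exponential in our moment assumption gives $\E\bigl[|Y-\flin(X)|^m \mid X=x\bigr] \le m!\, A^m M$ for every $m \ge 2$, which is the Bernstein-type noise condition imposed in \cite{CapVit07} with noise parameters proportional to $A$ and $M$. The source condition there reduces in our setting to membership in the RKHS with finite norm, which is automatic with preimage norm $\|\th^*\|$. Their bound involves the effective dimension $\N(\lam) = \Tr\bigl[T(T+\lam I)^{-1}\bigr] = \sum_{i=1}^d \nu_i/(\nu_i+\lam)$, where $\nu_1,\dots,\nu_d$ are the eigenvalues of $Q$; the assumption $\lam \le q_{\min}$ forces each summand into $[1/2,1]$, hence $d/2 \le \N(\lam) \le d$.

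Applying the high-probability inequality of \cite{CapVit07} then yields a bound of the form
\[
R(\hfrlam) - R(\flin) \le \kap\, \cL_\eps \left(\frac{A^2 \N(\lam)}{n} + \lam \|\th^*\|^2\, \tau(\lam)\right),
\]
where $\tau(\lam)$ is a multiplicative correction accounting for the deviation of the empirical from the population covariance operator, controlled via a matrix Bernstein-type concentration; under $\lam \le q_{\min}$, translating their operator inequalities back to the spectrum of $Q$ gives $\tau(\lam) \lesssim \cK\cL_\eps/(n q_{\min})$. Choosing $\lam = \cK d \cL_\eps/n$ to balance the two main contributions and using $\N(\lam) \le d$ reproduces exactly the announced form. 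The main obstacle is the bookkeeping of this last step: \cite{CapVit07} is phrased in a general infinite-dimensional language with implicit constants, so one must retrace their argument and carefully combine the two main Bernstein events (for $\hat Q - Q$ in operator norm, and for the noise vector $\frac{1}{n}\sum_i (Y_i-\flin(X_i))\vp(X_i)$) so that the only residual spurious factor is the expected $\lam/q_{\min}$, after which the stated bound follows by straightforward algebra.
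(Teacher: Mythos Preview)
Your overall plan---identify $\Flin$ with the RKHS of the kernel $k(x,x')=\langle\vp(x),\vp(x')\rangle$ and then specialize the bound of \cite{CapVit07}---is exactly the paper's strategy, and your verification of the noise moment condition and of $\N(\lam)\le d$ is fine. The difference is in how the second term of the bound is obtained.

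The paper does \emph{not} retrace any concentration argument. It simply invokes Theorem~4 (inequality (34)) of \cite{CapVit07} as a black box, after computing the three deterministic ingredients that appear there: the approximation error $\A(\lam)=\lVert f^{(\lam)}-\flin\rVert_{L^2}^2$, the reconstruction error $\cB(\lam)=\lVert f^{(\lam)}-\flin\rVert_{\cH}^2$, and the effective dimension $\N(\lam)$. Writing $Q=U\,\mathrm{diag}(q_j)\,U^T$, the population ridge minimizer has parameter $(Q+\lam I)^{-1}Q\th^*$, so $\flin-f^{(\lam)}$ has parameter $\lam(Q+\lam I)^{-1}\th^*$; from this one reads off
\[
\A(\lam)\le \frac{\lam^2}{q_{\min}}\lVert\th^*\rVert^2,\qquad
\cB(\lam)\le \frac{\lam^2}{q_{\min}^2}\lVert\th^*\rVert^2,\qquad
\N(\lam)\le d.
\]
Under the choice $\lam=\cK d\cL_\eps/n$ and the hypothesis $\lam\le q_{\min}$, the leading contributions in (34) are $\A(\lam)$ and the $\N(\lam)$ term, which gives exactly the stated inequality.

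Your write-up instead attributes the $\lam/q_{\min}$-type factor to a ``multiplicative correction $\tau(\lam)$ accounting for the deviation of the empirical from the population covariance operator''. That is a misidentification: the second term in the target bound is the \emph{deterministic} bias $\A(\lam)\le\lam^2\lVert\th^*\rVert^2/q_{\min}$, not a stochastic correction. In fact your proposed $\tau(\lam)\lesssim \cK\cL_\eps/(nq_{\min})$, once multiplied by $\lam\lVert\th^*\rVert^2$ and specialized to $\lam=\cK d\cL_\eps/n$, is off from the stated bound by a factor of $d$. The fix is not to ``retrace their argument and carefully combine the two main Bernstein events'', but simply to compute $\A(\lam)$ and $\cB(\lam)$ from the explicit diagonalization of $Q$ and quote (34) directly.
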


\begin{proof} 
One can check that 
	$
	\hfrlam \in \undc{\argmin}{f\in\cH} 
	            r(f) + \lam \sum_{j=1}^d \|f\|_{\cH}^2,
	$
where $\cH$ is the reproducing kernel Hilbert space associated with the kernel $K:(x,x') \mapsto \sum_{j=1}^d \vp_j(x) \vp_k(x')$.
Introduce 
	$
	f^{(\lam)} \in \undc{\argmin}{f\in\cH} R(f) + \lam \sum_{j=1}^d \|f\|_{\cH}^2.
	$
Let us use Theorem 4 in \cite{CapVit07} and the notation defined in their Section 5.2.
Let $\vp$ be the column vector of functions $[\vp_j]_{j=1}^d$, 
$\diag(a_j)$ denote the diagonal $d \times d$-matrix whose $j$-th element on the diagonal is $a_j$,
and $I_d$ be the $d \times d$-identity matrix.
Let $U$ and $q_1,\dots,q_d$ be such that $UU^T=I$ and 
	$Q=U \diag(q_j)U^T$.
We have $\flin = \vp^T \th^*$
and $f^{(\lam)} = \vp^T (Q+\lam I)^{-1} Q \th^*$, hence
	$$
	\flin - f^{(\lam)} = \vp^T U \diag(\lam/(q_j+\lam)) U^T \th^*.
	$$
After some computations, we obtain that the residual, reconstruction error and effective dimension respectively satisfy
$\A(\lam) \le \frac{\lam^2}{q_{\min}} \|\th^*\|^2$, $\cB(\lam) \le \frac{\lam^2}{q_{\min}^2} \|\th^*\|^2$,
and $\N(\lam) \le d$. The result is obtained by noticing that the leading terms in (34) of \cite{CapVit07} are
$\A(\lam)$ and the term with the effective dimension $\N(\lam)$. 
\end{proof}

The dependence in the sample size $n$ is correct since $1/n$ is known to be minimax optimal. 
The dependence on the dimension $d$ is not optimal, as it is observed in the example given page \pageref{ex1a}. 
Besides the high probability bound \eqref{eq:capvit}
holds only for a regularization parameter $\lam$ depending on the confidence level $\eps$. So we do not have a single estimator 
satisfying a PAC bound for every confidence level. 
Finally the dependence on the confidence level is larger than expected. It contains an unusual square.
The example given page \pageref{ex1a} illustrates Theorem \ref{th:capvit}.

\section{Some standard upper bounds on log-Laplace trans\-forms}

\begin{lemma} \label{le:stda}
Let $V$ be a random variable almost surely bounded by $b\in\R$.
Let $g: u\mapsto \bigl[ \exp(u) - 1 - u \bigr]/u^2$.
    \[
    \log \Bigl\{  \E  \Bigr[ \exp \bigl[ V-\E (V) \bigr] 
\Bigr] \Bigr\} \le \E \bigl( V^2 \bigr)  g(b).
    \]
\end{lemma}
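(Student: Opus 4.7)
The plan is to exploit the identity $\exp(u) = 1 + u + u^2 g(u)$ that comes directly from the definition of $g$, together with the monotonicity of $g$ on $\R$.

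First I would verify that $g$ is nondecreasing on all of $\R$. The quickest route is to note that $g(u)=\sum_{k\ge 0} u^k/(k+2)!$, which is manifestly increasing on $[0,+\infty)$; for the negative half-line one can differentiate: writing $h(u)=e^u-1-u$, we have $g'(u)=[u h'(u)-2h(u)]/u^3$, and checking that the numerator has the same sign as $u^3$ reduces (after one more differentiation) to the fact that $(u-1)e^u+1\ge 0$, which follows from its derivative $u e^u$ vanishing only at $0$. Hence $g$ is increasing on $\R$, with $g(0)=1/2$ by continuity.

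From this, since $V\le b$ almost surely and $V^2\ge 0$, the identity $e^V = 1+V+V^2 g(V)$ and the monotonicity of $g$ yield the pointwise bound
\[
\exp(V)\ \le\ 1+V+g(b)\,V^2.
\]
Taking expectations and then using $\log(1+x)\le x$ gives
\[
\log \E \exp(V)\ \le\ \E(V)+g(b)\,\E(V^2).
\]
Subtracting $\E(V)$ from both sides yields the claim $\log \E \exp\bigl(V-\E V\bigr)\le g(b)\,\E(V^2)$.

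There is no real obstacle here; the only non-routine step is the justification that $g$ is increasing on the whole real line (as opposed to just on $\R_+$, where the power series makes it obvious), and this is a one-line calculus exercise as above.
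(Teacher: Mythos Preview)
Your proof is correct and follows essentially the same route as the paper: write $e^V = 1 + V + V^2 g(V)$, use that $g$ is increasing to replace $g(V)$ by $g(b)$, take expectations, and apply $\log(1+x)\le x$. The only difference is cosmetic ordering and that you supply a justification for the monotonicity of $g$ on all of $\R$, which the paper simply asserts.
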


\begin{proof}
Since $g$ is an increasing function, we have $g(V) \le g(b)$. By using the inequality
$\log(1+u) \le u$, we obtain
\begin{multline*}
    \log 
\Bigl\{ \E  \Bigl[ \exp \bigl[ V-\E (V) \bigr] \Bigr] \Bigr\} = 
-\E (V) + \log \bigl\{ \E \bigl[ 1+V+V^2g(V)
\bigr] \bigr\} \\ \le \E \bigl[ V^2 g(V)\bigr]  \le \E 
\bigl( V^2 \bigr) g(b).
\end{multline*}
\end{proof}

\begin{lemma} \label{le:stdb}
Let $V$ be a real-valued random variable such that 
$\E \bigl[ \exp \bigl( |V| \bigr) \bigr] \le M$ for some $M>0$.
Then we have $|\E (V)| \le \log M$, and for any $-1<\alpha<1$,
    \[
    \log 
\Bigl\{ \E  \Bigr[ \exp \bigl\{  \alpha  \bigl[ V-\E (V) 
\bigr] \bigr\} \Bigr] \Bigr\} \le \frac{\alpha^2 M^2}{2\sqrt{\pi}(1-|\alpha|)}.
    \]
\end{lemma}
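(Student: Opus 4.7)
The plan is to treat the two assertions separately. For the bound $\lvert \E(V) \rvert \le \log M$, apply Jensen's inequality to the convex function $\exp$: $e^{\E \lvert V \rvert} \le \E e^{\lvert V \rvert} \le M$, so $\E \lvert V \rvert \le \log M$, and then $\lvert \E V \rvert \le \E \lvert V \rvert \le \log M$.

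For the main inequality, set $W = V - \E(V)$, which is centered. First I would upgrade the moment assumption: since $\lvert W \rvert \le \lvert V \rvert + \lvert \E V \rvert$, one has $\E e^{\lvert W \rvert} \le e^{\lvert \E V \rvert} \E e^{\lvert V \rvert} \le M \cdot M = M^{2}$, using the first part of the lemma. Next I would expand the exponential as a power series. Because $\sum_{k \ge 0} \lvert \alpha \rvert^k \E \lvert W \rvert^k / k! = \E e^{\lvert \alpha \rvert \lvert W \rvert} \le \E e^{\lvert W \rvert} \le M^{2} < \infty$ for $\lvert \alpha \rvert < 1$, the exchange of expectation and summation is legitimate, and since $\E W = 0$,
\[
\E e^{\alpha W} - 1 = \sum_{k \ge 2} \frac{\alpha^k \, \E(W^k)}{k!}.
\]

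The key step is a Stirling-based pointwise sharpening of the crude inequality $x^k/k! \le e^x$. For every $k \ge 1$ and every $x \ge 0$,
\[
\frac{x^k}{k!} \le \frac{e^x}{\sqrt{2\pi k}},
\]
which follows from $k! \ge \sqrt{2\pi k}\, (k/e)^k$ together with the fact that $x \mapsto e^{-x} x^k$ attains its maximum $k^k e^{-k}$ at $x = k$. Applying this inequality to $\lvert W \rvert$ and taking expectations gives, for every $k \ge 2$,
\[
\frac{\E \lvert W \rvert^k}{k!} \le \frac{\E e^{\lvert W \rvert}}{\sqrt{2\pi k}} \le \frac{M^2}{\sqrt{4\pi}} = \frac{M^2}{2\sqrt{\pi}}.
\]

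Finally, combining everything with $\log(1+u) \le u$ (valid since Jensen yields $\E e^{\alpha W} \ge 1$):
\[
\log \E e^{\alpha W} \le \E e^{\alpha W} - 1 \le \sum_{k \ge 2} \lvert \alpha \rvert^k \frac{\E \lvert W \rvert^k}{k!} \le \frac{M^2}{2\sqrt{\pi}} \sum_{k \ge 2} \lvert \alpha \rvert^k = \frac{\alpha^2 M^2}{2\sqrt{\pi}\,(1 - \lvert \alpha \rvert)}.
\]
The only non-routine step is spotting the Stirling-type refinement $x^k/k! \le e^x/\sqrt{2\pi k}$; the naive bound $x^k/k! \le e^x$ would yield the correct $\alpha^2/(1-\lvert\alpha\rvert)$ shape but with constant $M^2$ instead of $M^2/(2\sqrt{\pi})$, so this precise constant $\frac{1}{2\sqrt{\pi}}$ is exactly what the Stirling estimate (at its worst index $k=2$) produces.
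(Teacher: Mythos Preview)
Your proof is correct and follows essentially the same route as the paper's: both use Jensen for $|\E V|\le\log M$, the bound $\E e^{|W|}\le M^2$, the inequality $\log u\le u-1$, and the Stirling-based estimate $x^k/k!\le e^x/\sqrt{2\pi k}$ (equivalently $\sup_{u\ge0}u^k e^{-u}/k!\le 1/\sqrt{2\pi k}$) together with $1/\sqrt{2\pi k}\le 1/(2\sqrt{\pi})$ for $k\ge2$. The only cosmetic difference is that the paper first factors out $\E e^{|W|}$ and then bounds the deterministic supremum $\sup_{u\ge0}(e^{|\alpha|u}-1-|\alpha|u)e^{-u}$ term by term, whereas you apply the pointwise inequality to $|W|$ and take expectations directly; the computations are otherwise identical.
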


\begin{proof}
First note that by Jensen's inequality, we have $|\E (V)|\le \log(M)$.
By using $\log(u) \le u-1$ and Stirling's formula, for any $-1<\alpha<1$, we have
    \begin{multline*}
    \log 
\Bigl\{ \E \Bigl[ \exp \bigl\{ \alpha  \bigl[ V-\E (V) 
\bigr] \bigr\} \Bigr] \Bigr\} \le \E 
\Bigl[ \exp \bigl\{ \alpha  \bigl[ V-\E (V) \bigr] 
\bigr\} \Bigr] \Bigr\}  - 1\\
     = \E \Bigl\{ \exp \bigl\{ \alpha \bigl[ V-\E (V) 
\bigr] \bigr\} - 1 - \alpha \bigl[ V-\E (V) \bigr] 
\Bigr\} \\
     \le \E 
\Bigl\{ \exp \bigl[ |\alpha| |V-\E (V)| \bigr] - 1 - |\alpha| |V-\E (V)| 
\Bigr\} \\
     \le \E \Bigl\{ \exp \bigl[ |V-\E (V)| 
\bigr] \Bigr\} \sup_{u\ge 0} 
\Bigl\{ \bigl[ \exp (|\alpha| u) - 1 - |\alpha| u 
\bigr]\exp(-u) \Bigr\}\\
     \le \E \Bigl[ \exp \bigl(|V|+|\E (V)| \bigr) \Bigr]  
\sup_{u\ge 0} \sum_{m\ge 2} \frac{|\alpha|^m u^m}{m!} \exp(-u)\\
     \le M^2 \sum_{m\ge 2} \frac{|\alpha|^m}{m!} \sup_{u\ge 0} u^m \exp(-u)
     = \alpha^2 M^2 \sum_{m\ge 2} \frac{|\alpha|^{m-2}}{m!} m^m \exp(-m) \\
     \le \alpha^2 M^2 \sum_{m\ge 2} \frac{|\alpha|^{m-2}}{\sqrt{2\pi m}}
     \le \frac{\alpha^2 M^2}{2\sqrt{\pi}(1-|\alpha|)}.
    \end{multline*}
\end{proof}

\bibliographystyle{plain}
\bibliography{ref}

\end{document}